\newtheorem{lem}{Lemma}
\newtheorem{thm}{Theorem}
\def \f {\mathbf f}
\def \hf {\mathbf {\hat f}}
\def \e {\mathbbm e}
\def \i {\mathbbm i}
\def \n {\mathbf n}
\begin{document}
\pagestyle{plain}

\title{A New Class of Fully Discrete Sparse Fourier Transforms:  Faster Stable Implementations with Guarantees}\thanks{Sami Merhi:  Department of Mathematics, Michigan State University, East Lansing, MI, 48824, USA (merhisam@math.msu.edu).  \\ \indent Ruochuan Zhang:  Department of Mathematics, Michigan State University, East Lansing, MI, 48824, USA (zhangr12@msu.edu). \\ \indent Mark A. Iwen:  Department of Mathematics, and Department of Computational Mathematics, Science, and Engineering (CMSE), Michigan State University, East Lansing, MI, 48824, USA (markiwen@math.msu.edu).  \\ \indent Andrew Christlieb:  Department of Computational Mathematics, Science, and Engineering (CMSE), Michigan State University, East Lansing, MI, 48824, USA (andrewc@msu.edu).}
\author{Sami Merhi, Ruochuan Zhang, Mark A. Iwen, Andrew Christlieb}

\begin{abstract}
In this paper we consider Sparse Fourier Transform (SFT) algorithms for approximately computing the best $s$-term approximation of the Discrete Fourier Transform (DFT) $\hf \in \mathbb{C}^N$ of any given input vector $\f \in \mathbbm{C}^N$ in just $\left( s \log N\right)^{\mathcal{O}(1)}$-time using only a similarly small number of entries of $\f$.  
In particular, we present a deterministic SFT algorithm which is guaranteed to always recover a near best $s$-term approximation of the DFT of any given input vector $\f \in \mathbbm{C}^N$ in $\mathcal{O} \left( s^2 \log ^{\frac{11}{2}} (N) \right)$-time.  Unlike previous deterministic results of this kind, our deterministic result holds for both arbitrary vectors $\f \in \mathbbm{C}^N$ and vector lengths $N$.  In addition to these deterministic SFT results, we also develop several new publicly available randomized SFT implementations for approximately computing $\hf$ from $\f$ using the same general techniques.  The best of these new implementations is shown to outperform existing discrete sparse Fourier transform methods with respect to both runtime and noise robustness for large vector lengths $N$.

\end{abstract}

\maketitle
\thispagestyle{empty}

\section{Introduction}
\label{sec:intro}

Herein we are concerned with the rapid approximation of the discrete Fourier transform $\hf  \in \mathbb{C}^N$ of a given vector $\f \in \mathbb{C}^N$ for large values of $N$.  Though standard Fast Fourier Transform (FFT) algorithms  \cite{cooley1965algorithm,bluestein1970linear,rabiner1969chirp} can accomplish this task in $\mathcal{O} \left( N \log N \right)$-time for arbitrary $N \in \mathbbm{N}$, this runtime complexity may still be unnecessarily computationally taxing when $N$ is extremely large.  This is particularly true when the vector $\hf$ is approximately $s$-sparse (i.e., contains only $s \ll N$ nonzero entries) as in compressive sensing \cite{foucart2013mathematical} and certain wideband signal processing applications (see, e.g., \cite{laska2006random}).  Such applications have therefore motivated the development of Discrete Sparse Fourier Transform (DSFT) techniques \cite{gilbert2008tutorial,gilbert2014recent} which are capable of accurately approximating $s$-sparse DFT vectors $\hf \in \mathbb{C}^N$ in just $s \cdot \log^{\mathcal{O}(1)} N$-time.  When $s \ll N$ these methods are significantly faster than standard $\mathcal{O} \left( N \log N \right)$-time FFT methods, effectively achieving sublinear $o(N)$ runtime complexities in such cases.

Currently, the most widely used $s \cdot \log^{\mathcal{O}(1)} N$-time DSFT methods \cite{gilbert2005improved,iwen2007empirical,sFFT2} are randomized algorithms which accurately compute $\hf$ with high probability when given sampling access to $\f$.  Many existing sparse Fourier transforms which are entirely deterministic \cite{mk2010,Iw-arxiv,segal2013improved,lawlor2013adaptive,clw}, on the other hand, are perhaps best described as Unequally Spaced Sparse Fourier Transform (USSFT) methods in that they approximately compute $\hf$, with its entries $\hat{f}_\omega$ indexed by the set $B := \left(-\left\lceil \frac{N}{2}\right\rceil ,\left\lfloor \frac{N}{2}\right\rfloor \right]\cap\mathbb{Z}$, by sampling its associated trigonometric polynomial 
\[
f\left(x\right)=\sum_{\omega \in B}\hat{f}_{\omega} \e^{\i \omega x}
\]
at a collection of $m \ll N$ specially constructed unequally spaced points $x_1, \dots, x_m \in [-\pi, \pi]$.  These methods have no probability of failing to recover $s$-sparse $\hf$, but can not accurately compute the DFT $\hf$ of an arbitrary given vector $\f \in \mathbb{C}^N$ due to their need for \textit{unequally spaced} function evaluations from $f$ of the form $\left\{ f(x_k) \right\}^m_{k=1}$.\footnote{Note that methods which compute the DFT $\hf$ of a given vector $\f$ implicitly assume that $\f$ contains \textit{equally spaced} samples from the trigonometric polynomial $f$ above.}  

This state of affairs has left a gap in the theory of DSFT methods.  Existing deterministic sparse Fourier transform algorithms currently can efficiently compute the $s$-sparse DFT $\hf$ of a given vector $\f \in \mathbb{C}^N$ only if either $(i)$ $N$ is a power of a small prime \cite{morotti2016explicit}, or else $(ii)$ $\hat{f}_\omega = 0$ for all $\omega \in B$ with $|\omega| > N/4$ \cite{mk2010,mk2012notes}.  In this paper this gap is filled by the development of a new entirely deterministic DSFT algorithm which is always guaranteed to accurately approximate any (nearly) $s$-sparse $\hf \in \mathbb{C}^N$ of any size $N$ when given access only to $\f \in \mathbb{C}^N$.  In addition, the method used to develop this new deterministic DSFT algorithm is general enough that it can be applied to \textit{any} fast and noise robust USSFT method of the type mentioned above (be it deterministic, or randomized) in order to yield a new fast and robust DSFT algorithm.  As a result, we are also able to use the fastest of the currently existing USSFT methods \cite{iwen2008deterministic,mk2010,Iw-arxiv,segal2013improved,lawlor2013adaptive,clw,bittens2017deterministic} in order to create new publicly available DSFT implementations herein which are both faster and more robust to noise than currently existing noise robust DSFT methods for large $N$.  

More generally, we emphasize that the techniques utilized below free developers of SFT methods to develop more general USSFT methods which utilize samples from the trigonometric polynomial $f$ above at any points $\left \{ x_k \right\}^m_{k=1} \subset [-\pi,\pi]$ they like when attempting to create better DSFT algorithms in the future.  Indeed, the techniques herein provide a relatively simple means of translating any future fast and robust USSFT algorithms into (still fast) DSFT algorithms.

\subsection{Theoretical Results}  

Herein we focus on rapidly producing near best $s$-term approximations of $\hf$ of the type usually considered in compressive sensing \cite{cohen2009compressed}.  Let $\hf_{s}^{\rm opt} \in \mathbb{C}^N$ denote an optimal $s$-term approximation to $\hf \in \mathbb{C}^N$.  That is,  let $\hf_{s}^{\rm opt}$ preserve $s$ of the largest magnitude entries of $\hf$ while setting the rest of its $N-s$ smallest magnitude entires to $0$.\footnote{Note that $\hf_{s}^{\rm opt}$ may not be unique as there can be ties for the $s^{\rm th}$ largest entry in magnitude of $\f$.  This trivial ambiguity turns out not to matter.}  The following DSFT theorem is proven below.\footnote{Theorem~\ref{thm:Mainthm} is a slightly simplified version of Theorem~\ref{thm:MainTHM} proven in \S\ref{sec:ErrorBound}.}

\begin{thm}
Let $N\in\mathbb{N}$, $s\in [2,N]\cap\mathbb{N}$, $1\leq r \leq \frac{N}{36}$, and $\f \in \mathbb{C}^N$.  There exists an algorithm that will always deterministically return an $s$-sparse vector $\mathbf{v} \in\mathbb{C}^{N}$ satisfying
\begin{equation}
\left\Vert \hf-\mathbf{v} \right\Vert _{2}\leq\left\Vert \hf-\hf_{s}^{\rm opt}\right\Vert _{2}+\frac{33}{\sqrt{s}}\cdot\left\Vert \hf-\hf_{s}^{\rm opt}\right\Vert _{1}+198\sqrt{s}\left\Vert \f\right\Vert_{\infty}N^{-r} \label{equ:upper_boundIntro}
\end{equation}
in just $\mathcal{O} \left( \frac{ s^2\cdot r^{\frac32} \cdot \log ^{\frac{11}{2}} (N)} {\log (s)} \right)$-time when given access to $\f$.  If returning an $s$-sparse vector $\mathbf{v}\in\mathbb{C}^{N}$ that satisfies \eqref{equ:upper_boundIntro} for each $\f$ with probability at least $(1-p) \in [2/3,1)$ is sufficient, a Monte Carlo algorithm also exists which will do so in just $ \mathcal{O} \left(  s\cdot r^{\frac32} \cdot \log ^\frac{9}{2}(N)\cdot \log \left( \frac{N}{p}\right) \right)$-time.
\label{thm:Mainthm}
\end{thm}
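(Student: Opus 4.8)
The plan is to reduce Theorem~\ref{thm:Mainthm} to two ingredients and then compose them while carefully tracking the error. The first ingredient is a \emph{robust USSFT}: a sublinear-time algorithm $\mathcal{A}$ (deterministic for the first claim, randomized for the second) which, when supplied with possibly corrupted evaluations $y_k = f(x_k) + n_k$ of the trigonometric polynomial $f$ at its own prescribed node set $\{x_k\}_{k=1}^m \subset [-\pi,\pi]$, returns an $s$-sparse $\mathbf{v}$ obeying a bound of the form $\|\hat{\mathbf f} - \mathbf v\|_2 \le \|\hat{\mathbf f}-\hat{\mathbf f}_s^{\mathrm{opt}}\|_2 + \frac{c_1}{\sqrt s}\|\hat{\mathbf f}-\hat{\mathbf f}_s^{\mathrm{opt}}\|_1 + c_2\sqrt{s}\,\|\mathbf n\|_\infty$ in the stated time and sample complexity; this is exactly what the cited USSFT constructions provide, and I would instantiate it with the fastest available one. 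The second ingredient is a \emph{fast approximate evaluation} of $f$ from the given vector $\mathbf f$: since $f_j = f(2\pi j/N)$ one has the exact identity $f(y) = \sum_{j} f_j\, S_N(y - 2\pi j/N)$, where $S_N$ is the Dirichlet/periodic-sinc kernel associated with the band $B$, and I would replace $S_N$ by a truncated, smoothly windowed surrogate $\phi_w$ supported on $\mathcal{O}(\mathrm{poly}(r,\log N))$ grid points, setting $\tilde f(y) := \sum_{j} f_j\, \phi_w(y - 2\pi j/N)$. The crux is to choose the window profile (e.g.\ a Gaussian- or Kaiser--Bessel-type function) and the truncation radius $w$ so that $|\tilde f(y) - f(y)| \le \|\mathbf f\|_\infty N^{-r}$ uniformly in $y$ and in $\mathbf f$, while each evaluation uses only $\mathcal{O}(\mathrm{poly}(r,\log N))$ entries of $\mathbf f$ and arithmetic operations; the $r^{3/2}$ and the surplus logarithmic factors in the final runtime will come out of this balance (window width versus truncation radius versus, if needed, a mild refinement of $\{x_k\}$ so that every requested node has enough usable neighbours on the length-$N$ grid).

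With these two pieces in hand the proof is composition and bookkeeping. I would run $\mathcal{A}$ and, whenever it requests $f(x_k)$, instead hand it $\tilde f(x_k)$; the effective noise is $n_k = \tilde f(x_k) - f(x_k)$ with $\|\mathbf n\|_\infty \le \|\mathbf f\|_\infty N^{-r}$, so the robustness guarantee of $\mathcal{A}$ (stated in the $\ell_\infty$ or, after the usual conversion, $\ell_2$ norm of the sampling error, the latter only costing a polylogarithmic tightening of the per-sample accuracy) immediately yields a bound of the shape \eqref{equ:upper_boundIntro}. It then remains to (i) check that the constants $c_1,c_2$ of the chosen USSFT, once the window/accuracy constants are absorbed, can be taken to be $33$ and $198$ — which only requires running the evaluation routine at a correspondingly adjusted target accuracy, changing the truncation radius by at most an additive constant — and (ii) sum the costs: $\mathcal{A}$ contributes its native runtime ($\mathcal{O}(s^2\log^{\,\cdot}N/\log s)$ deterministic, $\mathcal{O}(s\log^{\,\cdot}N\log(N/p))$ randomized) and makes $m$ sample queries, each now costing $\mathcal{O}(\mathrm{poly}(r,\log N))$ through $\tilde f$, which after the $\log N$ bookkeeping gives the stated totals. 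The constraint $r \le N/36$ is what keeps the sample budget of the composed algorithm below $N$ and the window construction inside its valid regime, while $s \ge 2$ is inherited from the USSFT guarantee; the Monte Carlo statement follows by the single substitution of a randomized USSFT for $\mathcal{A}$, since the only randomness in the pipeline lives inside $\mathcal{A}$ and its failure probability $p$ is therefore inherited verbatim.

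The step I expect to be the main obstacle is the accuracy analysis of $\tilde f$. The difficulty is that $f$ has degree essentially $N/2$ while the samples $f_j$ sit at spacing exactly $2\pi/N$ — i.e.\ right at the Nyquist rate — so a naive truncation of the slowly decaying kernel $S_N$ gives only $\mathcal{O}(1/w)$ accuracy; obtaining the exponential-in-$w$ decay needed to reach $N^{-r}$ with $w = \mathcal{O}(\mathrm{poly}(r,\log N))$ forces one to introduce the window carefully and to control, uniformly in $y$ and in $\mathbf f$, both its spatial truncation error and its spectral leakage near the band edges $\pm N/2$. I would handle this with an explicitly parameterized window whose Fourier transform is well concentrated on $B$ (paying for the absence of oversampling by a controlled enlargement of $w$), prove a clean two-sided estimate $|\tilde f(y)-f(y)| \le \big(Ae^{-\alpha w} + Be^{-\beta w}\big)\|\mathbf f\|_\infty$, and optimize the window parameters against $w$; carrying the constants through that optimization is precisely what pins down the admissible range of $r$ and the exact power of $r$ in the runtime.
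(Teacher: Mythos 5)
There is a genuine gap, and it sits exactly where you predicted: the uniform estimate $|\tilde f(y)-f(y)|\le \left(A\e^{-\alpha w}+B\e^{-\beta w}\right)\|\f\|_\infty$ for a windowed, $w$-point-truncated interpolant is not achievable at critical sampling, no matter how the window is parameterized. The reason is an uncertainty-principle obstruction rather than a matter of choosing the profile cleverly. Your surrogate $\tilde f(y)=\sum_j f_j\,\phi_w(y-2\pi j/N)$ has spectrum $\widehat{\phi_w}(\omega)$ times the $N$-periodization of $\widehat{f}$, so uniform accuracy over all $f$ supported on $B$ forces $\widehat{\phi_w}$ to be $\approx 1$ on $[-M,M]$ and $\approx 0$ on the adjacent alias copies, i.e.\ to transition from $1$ to $0$ between the integers $M$ and $M+1$. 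A kernel whose Fourier transform transitions over a frequency band of width $O(1)$ cannot be spatially concentrated on fewer than $\Omega(N)$ grid points with small tails; the test function $f(x)=\e^{\i Mx}$ (for which $\|\f\|_\infty=1$) defeats any $\phi_w$ with $w=\mathrm{poly}(r,\log N)$, since $\tilde f$ then contains an alias term $\widehat{\phi_w}(-M-1)\,\e^{-\i(M+1)y}$ of non-negligible size. ``Paying for the absence of oversampling by a controlled enlargement of $w$'' therefore costs $w=\Omega(N)$, destroying the sublinear runtime. This is precisely why the paper declines to interpolate $f$ directly.

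What the paper does instead --- and what your pipeline is missing --- is a filter-bank structure. One never approximates $f(x_k)$; one approximates $(\tilde g_q\ast f)(x_k)$ for a periodized Gaussian $g$ and $O(\sqrt{r\log N})$ modulations $\tilde g_q(x)=\e^{-\i qx}g(x)$. Because $\widehat{g}$ decays, the semi-discrete convolution sum $\frac1N\sum_j f(y_j)\,g(x-y_j)$ both equals $(g\ast f)(x)$ up to $O(\|\f\|_\infty N^{-r})$ (Theorem~\ref{thm:convolution_error}) and can be truncated to $O(r\log N)$ terms (Lemma~\ref{lem:fewer_points_error}); the width-$O(1)$ spectral transition that blocks your construction never arises since the target is $\widehat g\circ\widehat f$, not $\widehat f$. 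The price is that each USSFT run only sees $\widehat{g}_{\omega+q}\widehat{f}_\omega$, so one must divide by the known filter values on the passband where $\widehat{g}_{\omega+q}\ge\tau$ (Lemma~\ref{lem:Periodic_Gaussian_Decay}), discard frequencies outside it, and repeat over $\lceil c_2\rceil=O(\sqrt{r\log N})$ shifted passbands to cover $B$. Those two extra loops are where the factors $r^{3/2}\log^{3/2}N$ multiplying the native USSFT cost come from, and where the constants $33$ and $198$ (which carry a factor $1/\tau=3$ from the passband division) are pinned down; your accounting, which charges only a per-sample evaluation cost to a single USSFT run, would not reproduce either. The rest of your reduction --- treating the evaluation error as $\ell_\infty$-bounded noise and invoking the robust USSFT guarantee --- matches the paper, but it is downstream of the step that fails.
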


Note the quadratic-in-$s$ runtime dependence of the deterministic algorithm mentioned by Theorem~\ref{thm:Mainthm}.  It turns out that there is a close relationship between the sampling points $\left \{ x_k \right\}^m_{k=1}$ used by the deterministic USSFT methods \cite{Iw-arxiv} employed as part of the proof of Theorem~\ref{thm:Mainthm} and the construction of explicit (deterministic) RIP matrices (see \cite{iwen2009simple,bailey2012design} for details).  As a result, reducing the quadratic dependence on $s$ of the $s^2 \log^{\mathcal{O}(1)} N$-runtime complexity of the deterministic DSFT algorithms referred to by Theorem~\ref{thm:Mainthm} while still satisfying the error guarantee \eqref{equ:upper_boundIntro} is likely at least as difficult as constructing explicit deterministic RIP matrices with fewer than $s^2 \log^{\mathcal{O}(1)} N$ rows by subsampling rows from an $N \times N$ DFT matrix.  Unfortunately, explicitly constructing RIP matrices of this type is known to be a very difficult problem \cite{foucart2013mathematical}.  This means that constructing an entirely deterministic DSFT algorithm which is both guaranteed to always satisfy \eqref{equ:upper_boundIntro}, and which also always runs in $s \log^{\mathcal{O}(1)} N$-time, is also likely to be extremely difficult to achieve at present.\footnote{Of course deterministic algorithms with error guarantees of the type of \eqref{equ:upper_boundIntro} do exist for more restricted classes of periodic functions $f$.  See, e.g., \cite{bittens2016sparse,bittens2017deterministic,plonka2016deterministic} for some examples. These include USSFT methods developed for periodic functions with structured Fourier support \cite{bittens2016sparse} which are of use for, among other things, the fast approximation of functions which exhibit sparsity with respect to other bounded orthonormal basis functions \cite{hu2015rapidly}.}

The remainder of this paper is organized as follows:  In section~\ref{sec:setup} we set up notation and establish necessary background results.  Then, in section~\ref{sec:Alg}, we describe our method for converting noise robust USSFT methods into DSFT methods.  The resulting approach is summarized in Algorithm~\ref{Alg1} therein.  Next, Theorem~\ref{thm:Mainthm} is proven in section~\ref{sec:ErrorBound} using the intermediary results of sections \ref{sec:setup} and \ref{sec:Alg}.  An empirical evaluation of several new DSFT algorithms resulting from our proposed approach is then performed in section \ref{sec:NumEval}.  The paper is finally concluded with a few additional comments in \S\ref{sec:Conc}.

\section{Notation and Setup}
\label{sec:setup}
The Fourier series representation of a $2\pi-$periodic function $f:\left[-\pi,\pi\right]\to\mathbb{C}$ will be denoted by
\[
f\left(x\right)=\sum_{\omega \in\mathbb{Z}}\widehat{f}_{\omega}\e^{\i \omega x}
\]
with its Fourier coefficients given by
\[
\widehat{f}_{\omega}=\frac{1}{2\pi}\int_{-\pi}^{\pi}f\left(x\right)\e^{-\i \omega x}~dx.
\]
We let $\widehat{f}:=\left\{ \widehat{f}_{\omega}\right\} _{\omega \in\mathbb{Z}}$ represent
the infinite sequence of all Fourier coefficients of $f$ below.  Given two $2\pi-$periodic
functions $f$ and $g$ we define the convolution of $f$ and $g$ at $x \in \mathbb{R}$ to be
\[
\left(f\ast g\right)\left(x\right)~=~\left(g\ast f\right)\left(x\right)~:=~ \frac{1}{2\pi}\int_{-\pi}^{\pi}g\left(x-y\right)f\left(y\right)dy.
\]
This definition, coupled with the definition of the Fourier transform,
yields the well-known equality
\[
\widehat{f\ast g}_{\omega}=\widehat{f}_{\omega}\widehat{g}_{\omega}\ \forall \omega\in\mathbb{Z}.
\]
We may also write $\widehat{f\ast g}=\widehat{f}\circ\widehat{g}$ where $\circ$
denotes the Hadamard product.

For any $N\in\mathbb{N}$, define the Discrete Fourier Transform (DFT)
matrix $F\in\mathbb{C}^{N\times N}$ by
\[
F_{\omega,j} :=\frac{(-1)^\omega}{N} \e^{-\frac{2\pi \i \cdot \omega \cdot j}{N}},
\]
and let $B:=\left(-\left\lceil \frac{N}{2}\right\rceil ,\left\lfloor \frac{N}{2}\right\rfloor \right]\cap\mathbb{Z}$
be a set of $N$ integer frequencies centered at $0$. Furthermore, let $\f \in \mathbb{C}^N$ denote the vector of equally spaced samples from $f$ whose entries are given by 
\[
f_j :=  f\left(-\pi+\frac{2\pi j}{N}\right)
\]
for $j = 0, \dots, N-1$.  One can now see that
if
\[
f\left(x\right)=\sum_{\omega \in B}\widehat{f}_{\omega} \e^{\i \omega x},
\]
then 
\begin{equation}
F \f =: \hf
\label{equ:Disc_Fourier_Def}
\end{equation}
where $\hf \in\mathbb{C}^{N}$ denotes the subset of $\widehat{f}$ with indices
in $B$, and in vector form.  More generally, bolded lower case letters will always represent vectors in $\mathbb{C}^{N}$ below.

As mentioned above, $\widehat{f}:=\left\{ \widehat{f}_{\omega}\right\} _{\omega\in\mathbb{Z}}$ is the infinite sequence of all Fourier coefficients of $f$. For any subset $S \subseteq\mathbb{Z}$ we let $\widehat{f}\vert_{S}\in\mathbb{C}^{\mathbb{Z}}$ be the sequence $\widehat{f}$ restricted to the subset $S$, so that $\widehat{f}\vert_{S}$ has terms $\left( \widehat{f}\vert_{S} \right)_\omega = \widehat{f}_{\omega}$ for all $\omega \in S$, and $\left( \widehat{f}\vert_{S} \right)_\omega = 0$ for all $\omega \in S^{c}:=\mathbb{Z}\setminus S$.  Note that $\hf$ above is exactly $\widehat{f}\vert_{B}$ excluding its zero terms for all $\omega \notin B$.  Thus, given any subset $S\subseteq B$, we let $\hf \vert_{S}\in\mathbb{C}^{N}$ be 
the vector $\hf$ restricted to the set $S$ in an analogous fashion.  That is, for $S \subseteq B$ we will have $\left( \hf \vert_{S} \right)_\omega = \hf_\omega$ for all $\omega \in S$, and $\left( \hf \vert_{S} \right)_\omega = 0$ for all $\omega \in B\setminus S$.

Given the sequence $\widehat{f}\in\mathbb{C}^{\mathbb{Z}}$ and $s\leq N$,
we denote by $R_{s}^{\rm opt}\left(\widehat{f}\right)$ a subset of $B$
containing $s$ of the most energetic frequencies of $f$; that
is
\[
R_{s}^{\rm opt}\left(\widehat{f}\right) :=\left\{ \omega_{1},\dots,\omega_{s}\right\} \subseteq B \subset \mathbb{Z}  
\]
where the frequencies $\omega_j \in B$ are ordered such that $$\left|\widehat{f}_{\omega_{1}}\right|\geq\left|\widehat{f}_{\omega_{2}}\right|\geq\cdots\geq\left|\widehat{f}_{\omega_{s}}\right|\geq\cdots \geq \left|\widehat{f}_{\omega_{N}}\right|.$$
Here, if desired, one may break ties by also requiring, e.g., that $\omega_j < \omega_k$ for all $j < k$ with $\left|\widehat{f}_{\omega_{j}}\right|=\left|\widehat{f}_{\omega_{k}}\right|$.  We will then define $f_{s}^{\rm opt}:\left[-\pi,\pi\right]\to\mathbb{C}$ based on $R_{s}^{\rm opt}\left(\widehat{f}\right)$ by
\[
f_{s}^{\rm opt}\left(x\right):=\sum_{\omega\in R_{s}^{\rm opt}\left(\widehat{f}\right)}\widehat{f}_{\omega}\e^{\i \omega x}.
\]
Any such $2 \pi$-periodic function $f_{s}^{\rm opt}$ will be referred to as an optimal $s$-term approximation to $f$.  Similarly, we also define both $\widehat{f}_{s}^{\rm opt} \in\mathbb{C}^{\mathbb{Z}}$ and $\hf_{s}^{\rm opt} \in\mathbb{C}^{N}$ to be $\widehat{f}\vert_{R_{s}^{\rm opt}\left(\widehat{f}\right)}$ and $\hf \vert_{R_{s}^{\rm opt}\left(\widehat{f}\right)}$, respectively.

\subsection{Periodized Gaussians}

In the sections that follow the $2\pi-$periodic Gaussian $g:\left[-\pi,\pi\right]\to\mathbb{R}^{+}$ defined by
\begin{equation}
g\left(x\right)=\frac{1}{c_1}\sum_{n=-\infty}^{\infty}\e^{-\frac{\left(x-2n\pi\right)^{2}}{2c_1^{2}}}
\label{equ:Def_Periodic_Gaussian}
\end{equation}
with $c_1 \in \mathbb{R}^+$ will play a special role.  The following lemmas recall several useful facts concerning both its decay, and its Fourier series coefficients. 

\begin{lem}
The $2\pi-$periodic Gaussian $g:\left[-\pi,\pi\right]\to\mathbb{R}^{+}$ has
\[
g\left(x\right) \leq \left( \frac{3}{c_1}+\frac{1}{\sqrt{2\pi}} \right)\e^{-\frac{x^{2}}{2c_{1}^{2}}}
\]
for all $x \in \left[-\pi,\pi\right]$.
\label{lem:Decay_of_Periodic_Gaussian}
\end{lem}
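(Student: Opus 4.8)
The plan is to split the defining sum \eqref{equ:Def_Periodic_Gaussian} for $g$ into the $n=0$ term, which already has the desired Gaussian form, and the tail $\sum_{n\neq 0}$, which I will bound by a term that is exponentially small in $1/c_1^2$ and hence absorbable into the claimed constant. First I would write
\[
c_1\, g(x) \;=\; \e^{-\frac{x^2}{2c_1^2}} \;+\; \sum_{n\neq 0}\e^{-\frac{(x-2n\pi)^2}{2c_1^2}},
\]
so that it suffices to show the tail is at most $\left(3 + \tfrac{c_1}{\sqrt{2\pi}} - 1\right)\e^{-x^2/2c_1^2}$ — actually, more cleanly, it suffices to show $\sum_{n\neq 0}\e^{-(x-2n\pi)^2/2c_1^2} \le \tfrac{3}{?}\cdots$; the point is to peel off the comparison with $\e^{-x^2/2c_1^2}$ at the end.

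The key estimate is on the tail. For $|x|\le\pi$ and $n\neq 0$ one has $|x-2n\pi|\ge (2|n|-1)\pi\ge |n|\pi$, and in fact $|x - 2n\pi| \ge |x| + (\text{something growing in } |n|)$; more usefully, $(x-2n\pi)^2 \ge x^2 + $ a positive quantity, but the cleanest route is to bound each tail term by $\e^{-\frac{((2|n|-1)\pi)^2}{2c_1^2}}$ and compare. So the second step is to sum $\sum_{n\geq 1} \e^{-\frac{(2n-1)^2\pi^2}{2c_1^2}}$ (doubling for $\pm n$); I would bound this geometric-like series either by comparison to a geometric series with ratio $\e^{-\text{const}/c_1^2}$, or — to get a clean constant valid for \emph{all} $c_1>0$, including large $c_1$ where the tail is genuinely large — by comparison with an integral: $\sum_{n\geq 1}\e^{-\frac{(2n\pi - \pi - x)^2}{2c_1^2}} \le \int_{0}^\infty \e^{-\frac{(2\pi t - \pi - \pi)^2}{2c_1^2}}\,dt + (\text{boundary term})$, which evaluates in terms of $\int_{\R}\e^{-u^2/2c_1^2}\,du = c_1\sqrt{2\pi}$, giving a bound of the shape $\tfrac{c_1}{\sqrt{2\pi}}\cdot\tfrac{1}{2}$ plus a few leftover discrete terms of size $O(1)$ each. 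Combining, the tail is at most $\left(\tfrac{3}{c_1} + \tfrac{1}{\sqrt{2\pi}} - \tfrac{1}{c_1}\right)\cdot c_1 \cdot (\text{stuff})$; one then divides through by $c_1$ and checks that $\e^{-x^2/2c_1^2}\ge$ the constant factors work out, since $\e^{-x^2/2c_1^2}\le 1$ makes the comparison in the favorable direction only if handled carefully — this is why peeling off $\e^{-x^2/2c_1^2}$ at the very end, rather than bounding the tail by a pure constant, matters.

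The main obstacle is getting the \emph{uniform-in-$c_1$} constant right: for small $c_1$ the periodization tail is exponentially negligible and essentially any constant works, but as $c_1\to\infty$ the periodized Gaussian tends to a constant function $\approx \tfrac{1}{c_1}\cdot\tfrac{c_1\sqrt{2\pi}}{2\pi} = \tfrac{1}{\sqrt{2\pi}}$ pointwise while $\e^{-x^2/2c_1^2}\to 1$, so the inequality is tight precisely at the $\tfrac{1}{\sqrt{2\pi}}$ term and the $\tfrac{3}{c_1}$ slack must cover all the discrete-versus-integral correction terms for every intermediate $c_1$. I would handle this by a Poisson-summation or integral-comparison argument that cleanly separates the limiting $\tfrac{1}{\sqrt{2\pi}}\e^{-x^2/2c_1^2}$ main term from a remainder controlled by $\tfrac{c}{c_1}\e^{-x^2/2c_1^2}$ with an explicit small constant $c\le 3$, verifying the constant by a short monotonicity/calculus check rather than a slick trick.
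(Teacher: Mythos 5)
Your proposal follows the same route as the paper's proof: isolate the central term(s) of the periodization, bound the remaining sum by a Gaussian integral, and keep a factor of $\e^{-x^{2}/2c_{1}^{2}}$ on every piece rather than bounding the tail by a bare constant, which, as you correctly note, would fail for small $c_{1}$ and $|x|$ near $\pi$. The ``calculus check'' you defer is closed in the paper by peeling off the three terms $n=0,\pm1$ (each at most $\e^{-x^{2}/2c_{1}^{2}}$ since $\left|x\mp2\pi\right|\geq\pi\geq\left|x\right|$ on $\left[-\pi,\pi\right]$) and then observing that the $\left|n\right|\geq2$ tail is at most $2\int_{1}^{\infty}\e^{-\pi^{2}\left(2n-1\right)^{2}/2c_{1}^{2}}\,dn=\frac{c_{1}}{\sqrt{2\pi}}-\frac{1}{2}\int_{-1}^{1}\e^{-\pi^{2}m^{2}/2c_{1}^{2}}\,dm\leq\frac{c_{1}}{\sqrt{2\pi}}\e^{-\pi^{2}/2c_{1}^{2}}\leq\frac{c_{1}}{\sqrt{2\pi}}\e^{-x^{2}/2c_{1}^{2}}$ (via Lemma~\ref{lem:gaussian_integral_bound} and $1-\sqrt{1-y}\leq y$), which delivers exactly the uniform-in-$c_{1}$ separation you describe.
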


\begin{lem}
The $2\pi-$periodic Gaussian $g:\left[-\pi,\pi\right]\to\mathbb{R}^{+}$ has
\[
\widehat{g}_{\omega} = \frac{1}{\sqrt{2\pi}}\e^{-\frac{c_1^2 \omega^2}{2}}
\]
for all $\omega\in\mathbb{Z}$.  Thus, $\widehat{g}=\left\{ \widehat{g}_{\omega}\right\} _{\omega\in\mathbb{Z}}\in\ell^{2}$ decreases monotonically as $|\omega|$ increases, and also has $\| \widehat{g} \|_{\infty} = \frac{1}{\sqrt{2 \pi}}$.
\label{lem:Periodic_Gaussian_FC}
\end{lem}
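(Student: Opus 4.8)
The plan is to recognize $g$ as the $2\pi$-periodization of the (un-periodized) Gaussian $h(x):=\frac{1}{c_1}\e^{-x^2/(2c_1^2)}$ on $\R$, so that the claimed formula is essentially an instance of the Poisson summation formula carried out by hand. First I would write out the defining integral and substitute the series:
\[
\widehat{g}_\omega = \frac{1}{2\pi}\int_{-\pi}^{\pi} g(x)\,\e^{-\i\omega x}\,dx = \frac{1}{2\pi c_1}\int_{-\pi}^{\pi}\left(\sum_{n=-\infty}^{\infty}\e^{-\frac{(x-2n\pi)^2}{2c_1^2}}\right)\e^{-\i\omega x}\,dx.
\]
One then justifies interchanging the sum and the integral: the summands are nonnegative and the sum of the integrals converges (each term is dominated by a Gaussian and the $2n\pi$-shifts make the series summable), so Tonelli's theorem applies. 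After the interchange, the substitution $x\mapsto x+2n\pi$ in the $n$-th term, combined with the identity $\e^{-\i\omega(x+2n\pi)}=\e^{-\i\omega x}$ valid for $\omega\in\Z$, glues the integrals over translates of $[-\pi,\pi]$ into a single integral over $\R$:
\[
\widehat{g}_\omega = \frac{1}{2\pi c_1}\int_{-\infty}^{\infty}\e^{-\frac{x^2}{2c_1^2}}\,\e^{-\i\omega x}\,dx.
\]

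Next I would evaluate this Gaussian Fourier integral in the standard way: complete the square, $-\frac{x^2}{2c_1^2}-\i\omega x = -\frac{1}{2c_1^2}\left(x+\i c_1^2\omega\right)^2 - \frac{c_1^2\omega^2}{2}$, and shift the contour back to the real axis (the Gaussian decays along horizontal strips, so the shift is free), which gives $\int_{-\infty}^{\infty}\e^{-x^2/(2c_1^2)}\e^{-\i\omega x}\,dx = c_1\sqrt{2\pi}\,\e^{-c_1^2\omega^2/2}$. Substituting this back yields $\widehat{g}_\omega = \frac{1}{2\pi c_1}\cdot c_1\sqrt{2\pi}\,\e^{-c_1^2\omega^2/2} = \frac{1}{\sqrt{2\pi}}\e^{-c_1^2\omega^2/2}$, as claimed. (If one prefers to avoid contour shifting, the Fourier transform of the Gaussian can instead be obtained from the first-order ODE it satisfies, or simply cited as classical.)

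The remaining assertions are then immediate consequences of this closed form. The function $t\mapsto \e^{-c_1^2 t^2/2}$ is even and strictly decreasing for $t\ge 0$, so $\widehat{g}_\omega$ decreases monotonically as $|\omega|$ grows; its largest value over $\omega\in\Z$ is at $\omega=0$, namely $\widehat{g}_0=\frac{1}{\sqrt{2\pi}}$, so $\|\widehat{g}\|_\infty = \frac{1}{\sqrt{2\pi}}$; and $\sum_{\omega\in\Z}\e^{-c_1^2\omega^2}<\infty$ by super-exponential decay, so $\widehat{g}\in\ell^2$. The only step that needs any care is the sum–integral interchange, but positivity of the terms makes Tonelli directly applicable, so I do not expect a genuine obstacle here — the argument is essentially bookkeeping around Poisson summation together with the Fourier transform of a Gaussian.
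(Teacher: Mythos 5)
Your proposal is correct and follows essentially the same route as the paper's own proof: unfold the periodization, use $\e^{-\i\omega\cdot 2n\pi}=1$ to translate each term, glue the shifted integrals into a single Gaussian integral over $\mathbb{R}$, and evaluate it by the classical Gaussian Fourier transform. Your added care about justifying the sum--integral interchange via Tonelli is a minor refinement the paper leaves implicit.
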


\begin{lem}
Choose any $\tau \in \left(0, \frac{1}{\sqrt{2\pi}} \right)$, $\alpha \in \left[1, \frac{N}{\sqrt{\ln N}} \right]$, and $\beta \in \left(0 , \alpha \sqrt{\frac{\ln \left(1/\tau \sqrt{2\pi} \right)}{2}} ~\right]$. Let $c_1 = \frac{\beta \sqrt{\ln N}}{N}$ in the definition of the periodic Gaussian $g$ from \eqref{equ:Def_Periodic_Gaussian}. Then $\widehat{g}_{\omega} \in \left[\tau, \frac{1}{\sqrt{2\pi}} \right]$ for all $\omega \in \mathbb{Z}$ with $|\omega| \leq \Bigl\lceil \frac{N}{\alpha \sqrt{\ln N}}\Bigr\rceil$.
\label{lem:Periodic_Gaussian_Decay}
\end{lem}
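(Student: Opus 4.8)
The plan is to reduce the claim to a single scalar inequality using the closed form for $\widehat{g}_\omega$ recorded in Lemma~\ref{lem:Periodic_Gaussian_FC}, namely $\widehat{g}_\omega = \frac{1}{\sqrt{2\pi}}\e^{-c_1^2\omega^2/2}$. The upper bound $\widehat{g}_\omega \le \frac{1}{\sqrt{2\pi}}$ is then immediate for \emph{every} $\omega \in \mathbb{Z}$ and requires none of the hypotheses, so the whole content of the lemma is the lower bound $\widehat{g}_\omega \ge \tau$ on the range $|\omega| \le \bigl\lceil \frac{N}{\alpha\sqrt{\ln N}}\bigr\rceil$.

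First I would invoke the monotonicity of $|\widehat{g}_\omega|$ in $|\omega|$ (again from Lemma~\ref{lem:Periodic_Gaussian_FC}) to reduce to the single worst-case frequency $M := \bigl\lceil \frac{N}{\alpha\sqrt{\ln N}}\bigr\rceil$: it suffices to prove $\widehat{g}_M \ge \tau$. Since $\tau < \frac{1}{\sqrt{2\pi}}$ we have $\ln\bigl(\frac{1}{\tau\sqrt{2\pi}}\bigr) > 0$, and (with $\tau > 0$ making the logarithm finite) taking logs shows that $\widehat{g}_M \ge \tau$ is equivalent to $M^2 \le \frac{2}{c_1^2}\ln\bigl(\frac{1}{\tau\sqrt{2\pi}}\bigr)$. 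Substituting $c_1 = \frac{\beta\sqrt{\ln N}}{N}$ this becomes $M^2 \le \frac{2N^2}{\beta^2\ln N}\ln\bigl(\frac{1}{\tau\sqrt{2\pi}}\bigr)$.

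Next I would dispose of the ceiling. Because $\alpha \le \frac{N}{\sqrt{\ln N}}$ we have $\frac{N}{\alpha\sqrt{\ln N}} \ge 1$, and for any real $x \ge 1$ one has $\lceil x\rceil \le x+1 \le 2x$; hence $M \le \frac{2N}{\alpha\sqrt{\ln N}}$ and so $M^2 \le \frac{4N^2}{\alpha^2\ln N}$. Plugging this into the previous display, it is enough to verify $\frac{4N^2}{\alpha^2\ln N} \le \frac{2N^2}{\beta^2\ln N}\ln\bigl(\frac{1}{\tau\sqrt{2\pi}}\bigr)$; cancelling the common factor $\frac{N^2}{\ln N}$ this is exactly $\beta^2 \le \frac{\alpha^2}{2}\ln\bigl(\frac{1}{\tau\sqrt{2\pi}}\bigr)$, i.e.\ the hypothesis $\beta \le \alpha\sqrt{\frac{\ln(1/\tau\sqrt{2\pi})}{2}}$. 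This closes the argument.

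I do not expect a genuine obstacle here; the argument is essentially a chain of rearrangements. The one point that needs care is that the crude estimate $\lceil x\rceil \le 2x$ costs a factor of $4$ in $M^2$, and this is exactly matched by the factor $\tfrac12$ under the square root in the admissible range for $\beta$ — so the hypotheses are tight against this proof and none of the constants can be loosened. A small bookkeeping remark worth including is that $\beta > 0$ and $N \ge 2$ guarantee $c_1 > 0$, so that the periodized Gaussian $g$ in \eqref{equ:Def_Periodic_Gaussian} (and hence $\widehat{g}_\omega$) is well defined throughout.
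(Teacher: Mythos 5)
Your proof is correct and follows essentially the same route as the paper's: both reduce to the single worst-case frequency via the closed form of $\widehat{g}_\omega$ from Lemma~\ref{lem:Periodic_Gaussian_FC}, dispose of the ceiling by the same estimate (your $\lceil x\rceil \le x+1 \le 2x$ for $x\ge 1$ is exactly the paper's $\lceil \frac{N}{\alpha\sqrt{\ln N}}\rceil \le \frac{N}{\alpha\sqrt{\ln N}}+1$ combined with $N+\alpha\sqrt{\ln N}\le 2N$), and land on precisely the hypothesized constraint $\beta \le \alpha\sqrt{\ln(1/\tau\sqrt{2\pi})/2}$. Only cosmetic caveat: your local symbol $M$ collides with the paper's $M = (N-1)/2$, so rename it if this were to be merged into the text.
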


The proofs of Lemmas~\ref{lem:Decay_of_Periodic_Gaussian},~\ref{lem:Periodic_Gaussian_FC},~and~\ref{lem:Periodic_Gaussian_Decay} are included in Appendix~\ref{sec:proof_lems_1_2} for the sake of completeness.  Intuitively, we will utilize the periodic function $g$ from \eqref{equ:Def_Periodic_Gaussian} as a bandpass filter below.  Looking at Lemma~\ref{lem:Periodic_Gaussian_Decay} in this context we can see that its parameter $\tau$ will control the effect of $\widehat{g}$ on the frequency passband defined by its parameter $\alpha$.  Deciding on the two parameters $\tau, \alpha$ then constrains $\beta$ which, in turn, fixes the periodic Gaussian $g$ by determining its constant coefficient $c_1$.  As we shall see, the parameter $\beta$ will also determine the speed and accuracy with which we can approximately sample (i.e., evaluate) the function $f \ast g$.  For this reason it will become important to properly balance these parameters against one another in subsequent sections.

\subsection{On the Robustness of the SFTs proposed in \cite{Iw-arxiv}}

The sparse Fourier transforms presented in \cite{Iw-arxiv} include both deterministic and randomized methods for approximately computing the Fourier series coefficients of a given $2 \pi$-periodic function $f$ from its evaluations at $m$-points $\left\{ x_k \right\}^m_{k=1} \subset [-\pi, \pi]$.  The following results describe how accurate these algorithms will be when they are only given approximate evaluations of $f$ at these points instead.  These results are necessary because we will want to execute the SFTs developed in \cite{Iw-arxiv} on convolutions of the form $f \ast g$ below, but will only be able to approximately compute their values at each of the required points $x_1, \dots, x_m \in [-\pi,\pi]$.

\begin{lem}
Let $s, \epsilon^{-1} \in \mathbb{N} \setminus \{ 1 \}$ with $(s/\epsilon) \geq 2$, and $\n \in \mathbb{C}^m$ be an arbitrary noise vector.  There exists a set of $m$ points $\left\{ x_k \right\}^m_{k=1} \subset [-\pi, \pi]$ such that Algorithm 3 on page 72 of \cite{Iw-arxiv}, when given access to the corrupted samples $\left\{ f(x_k) + n_k \right \}^m_{k=1}$, will identify a subset $S \subseteq B$ which is guaranteed to contain all $\omega \in B$ with
\begin{equation}
\left|\widehat{f}_{\omega} \right| > 4 \left( \frac{\epsilon \cdot \left\| \hf - \hf^{\rm opt}_{(s/\epsilon)} \right\|_1}{s} + \left\| \widehat{f} - \widehat{f}\vert_{B} \right\|_1 + \| \n \|_\infty \right).
\label{equ:GSFT_Identify}
\end{equation}
Furthermore, every $\omega \in S$ returned by Algorithm 3 will also have an associated Fourier series coefficient estimate $z_{\omega} \in \mathbb{C}$ which is guaranteed to have 
\begin{equation}
\left| \widehat{f}_{\omega} - z_{\omega} \right| \leq \sqrt{2} \left( \frac{\epsilon \cdot \left\| \hf - \hf^{\rm opt}_{(s/\epsilon)} \right\|_1}{s} + \left\| \widehat{f} - \widehat{f}\vert_{B} \right\|_1 + \| \n \|_\infty \right).
\label{equ:GSFT_Estimate}
\end{equation}
Both the number of required samples, $m$, and Algorithm 3's operation count are
\begin{equation}
\mathcal{O} \left( \frac{s^2 \cdot \log^4 (N)}{\log \left(\frac{s}{\epsilon} \right) \cdot \epsilon^2} \right).
\end{equation}

If succeeding with probability $(1-\delta) \in [2/3,1)$ is sufficient, and $(s/\epsilon) \geq 2$, the Monte Carlo variant of Algorithm 3 referred to by Corollary 4 on page 74 of \cite{Iw-arxiv} may be used.  This Monte Carlo variant reads only a randomly chosen subset of the noisy samples utilized by the deterministic algorithm,
$$\left\{ f(\tilde{x}_k) + \tilde{n}_k \right \}^{\tilde{m}}_{k=1} \subseteq \left\{ f(x_k) + n_k \right \}^m_{k=1},$$
yet it still outputs a subset $S \subseteq B$ which is guaranteed to simultaneously satisfy both of the following properties with probability at least $1-\delta$: 
\begin{enumerate}
\item [(i)] $S$ will contain all $\omega \in B$ satisfying \eqref{equ:GSFT_Identify}, and 
\item [(ii)] all $\omega \in S$ will have an associated coefficient estimate $z_{\omega} \in \mathbb{C}$ satisfying \eqref{equ:GSFT_Estimate}.
\end{enumerate}  
Finally, both this Monte Carlo variant's number of required samples, $\tilde{m}$, as well as its operation count will also always be
\begin{equation}
\mathcal{O} \left( \frac{s}{\epsilon} \cdot \log^3 (N) \cdot \log \left( \frac{N}{\delta} \right) \right).
\end{equation}
\label{lem:SFT_Ident_Coef_Est_in_Noise}
\end{lem}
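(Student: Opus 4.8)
The plan is to treat the noise vector $\n$ as an additive perturbation of the internal measurements that Algorithm 3 of \cite{Iw-arxiv} computes from its samples, and then to re-run the worst-case analysis of \cite{Iw-arxiv} with this extra perturbation carried along. The key structural fact I would extract from \cite{Iw-arxiv} is that Algorithm 3 accesses $f$ \emph{only} through a fixed family of ``bin'' quantities, each of which is (after the normalization used there) an average of the form $\frac{1}{M}\sum_{k} c_k f(x_k)$ with $|c_k| = 1$, together with the short DFTs used to resolve individual frequencies within a bin, which have the same form. Consequently, feeding the corrupted samples $f(x_k)+n_k$ in place of $f(x_k)$ changes every such quantity by at most $\bigl|\frac{1}{M}\sum_k c_k n_k\bigr| \le \|\n\|_\infty$. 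In other words, running Algorithm 3 on $\{f(x_k)+n_k\}_{k=1}^m$ is, from the algorithm's point of view, the same as running it on the noiseless bin measurements, each perturbed in modulus by at most $\|\n\|_\infty$.

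Next I would invoke the identification/estimation guarantee already proved in \cite{Iw-arxiv} for Algorithm 3. That guarantee bounds both the identification threshold and the coefficient-estimation error in terms of the scaled best $(s/\epsilon)$-term error $\frac{\epsilon}{s}\bigl\|\hf - \hf^{\rm opt}_{(s/\epsilon)}\bigr\|_1$ and the aliasing tail $\bigl\|\widehat{f} - \widehat{f}\vert_{B}\bigr\|_1$ precisely because its proof is, at bottom, an analysis of the algorithm applied to bin measurements corrupted at a level equal to the sum of those two quantities. Adding the third, genuinely external, perturbation level $\|\n\|_\infty$ to that budget and propagating it through the same chain of inequalities yields exactly the bounds \eqref{equ:GSFT_Identify} and \eqref{equ:GSFT_Estimate}, with the constants $4$ and $\sqrt{2}$ inherited verbatim from \cite{Iw-arxiv}. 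Because we run the identical algorithm on the same point set $\{x_k\}_{k=1}^m$, the sample count $m$ and the operation count are unchanged, i.e.\ both equal the $\mathcal{O}\!\left(\frac{s^2\log^4 N}{\log(s/\epsilon)\,\epsilon^2}\right)$ bound quoted from \cite{Iw-arxiv}.

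For the Monte Carlo claim I would repeat this argument on the random subcollection of samples read by the variant of Corollary 4 in \cite{Iw-arxiv}. Its high-probability analysis isolates an event — namely, all of the randomly chosen bin measurements being simultaneously ``good'' for the unknown energetic frequencies — that occurs with probability at least $1-\delta$ and on which the deterministic-style identification and estimation conclusions hold. That event and its probability depend only on the random choice of samples, not on the observed values, so they are untouched by the presence of $\n$; and on that event the per-measurement perturbation bound $\|\tilde{\n}\|_\infty \le \|\n\|_\infty$ applies just as before, yielding properties (i) and (ii) with probability at least $1-\delta$. The sample and operation counts are again exactly those of the Monte Carlo variant in \cite{Iw-arxiv}, namely $\mathcal{O}\!\left(\frac{s}{\epsilon}\log^3 N \cdot \log(N/\delta)\right)$.

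The step I expect to require the most care is verifying that \emph{no} step of Algorithm 3 amplifies additive sample noise beyond the linear $\|\n\|_\infty$ scale assumed above — in particular the energy-estimation, hard-thresholding, and any median-of-estimates steps that \cite{Iw-arxiv} uses to obtain its explicit constants. Averages and coordinatewise medians are $1$-Lipschitz in the $\ell^\infty$ norm, and the thresholds are applied to quantities that the analysis already controls to within the ambient perturbation level, so these steps should be benign; but confirming this means walking through the algorithm of \cite{Iw-arxiv} step by step. If, as its proof structure strongly suggests, \cite{Iw-arxiv} already frames its guarantee so as to allow an arbitrary bounded additive perturbation of the measurements (equivalently, of the samples), then this obstacle disappears and the lemma reduces to matching notation and bookkeeping.
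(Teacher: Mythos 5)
Your proposal is correct and follows essentially the same route as the paper's proof: the paper likewise observes that each internal measurement of Algorithm 3 is a unimodular-weighted average (a short DFT) of samples, so the noise perturbs each such quantity by at most $\|\n\|_\infty$, and then folds this extra term into the redefined error level $\delta$ in the existing analyses of Lemma 6, Theorem 7, and Corollary 4 of \cite{Iw-arxiv}, with the median step handled exactly as you anticipate (more than half of the estimates remain good). The only difference is bookkeeping: the paper names the specific quantities $\left({\it \mathcal{G}_{\lambda,K} \tilde{\psi} {\bf A}} \right)_j$ and $\left({\it \mathcal{E}_{s_1,K} \tilde{\psi} {\bf A}} \right)_{j'}$ and the precise lemmas being modified, which your outline leaves implicit.
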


Using the preceding lemma one can easily prove the following noise robust variant of Theorem~7 (and Corollary~4) from \S5 of \cite{Iw-arxiv}.  The proofs of both results are outlined in Appendix~\ref{sec:proof_lem_3_thm_2} for the sake of completeness.

\begin{thm}
Suppose $f: [-\pi,\pi] \rightarrow \mathbb{C}$ has $\widehat{f} \in \ell^1 \cap \ell^2$.  
Let $s, \epsilon^{-1} \in \mathbb{N} \setminus \{ 1 \}$ with $(s/\epsilon) \geq 2$, and $\n \in \mathbb{C}^m$ be an arbitrary noise vector.  Then, there exists a set of $m$ points $\left\{ x_k \right\}^m_{k=1} \subset [-\pi, \pi]$ together with a simple deterministic algorithm $\mathcal{A}:  \mathbb{C}^m \rightarrow \mathbb{C}^{4s}$ such that $\mathcal{A} \left( \left\{ f(x_k) + n_k \right \}^m_{k=1} \right)$ is always guaranteed to output (the nonzero coefficients of) a degree $\leq N/2$ trigonometric polynomial $y_s: [-\pi, \pi] \rightarrow \mathbb{C}$
satisfying
\begin{equation}
\left\| f - y_s \right\|_2 \leq \left\| \hf - \hf_{s}^{\rm opt} \right\|_2 + \frac{22\epsilon \cdot \left\| \hf - \hf^{\rm opt}_{(s/\epsilon)} \right\|_1}{\sqrt{s}} + 22 \sqrt{s} \left( \left\| \widehat{f} - \widehat{f}\vert_{B} \right\|_1 + \| \n \|_\infty \right).
\label{eqn:StableReconError}
\end{equation}
Both the number of required samples, $m$, and the algorithm's operation count are always
\begin{equation}
\mathcal{O} \left( \frac{s^2 \cdot \log^4 (N)}{\log \left(\frac{s}{\epsilon} \right) \cdot \epsilon^2} \right).
\label{eqn:DetRuntime}
\end{equation}

If succeeding with probability $(1-\delta) \in [2/3,1)$ is sufficient, and $(s/\epsilon) \geq 2$, a Monte Carlo variant of the deterministic algorithm may be used.  This Monte Carlo variant reads only a randomly chosen subset of the noisy samples utilized by the deterministic algorithm,
$$\left\{ f(\tilde{x}_k) + \tilde{n}_k \right \}^{\tilde{m}}_{k=1} \subseteq \left\{ f(x_k) + n_k \right \}^m_{k=1},$$
yet it still outputs (the nonzero coefficients of) a degree $\leq N/2$ trigonometric polynomial, $y_s: [-\pi, \pi] \rightarrow \mathbb{C}$, that satisfies \eqref{eqn:StableReconError} with probability at least $1-\delta$.  Both its number of required samples, $\tilde{m}$, as well as its operation count will always be
\begin{equation}
\mathcal{O} \left( \frac{s}{\epsilon} \cdot \log^3 (N) \cdot \log \left( \frac{N}{\delta} \right) \right).
\label{eqn:RandRuntime}
\end{equation}
\label{thm:StableRecov}
\end{thm}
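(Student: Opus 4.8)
The plan is to build $\mathcal{A}$ from a single call to Algorithm~3 of \cite{Iw-arxiv} (with sparsity/accuracy parameters $s,\epsilon$) followed by a cheap greedy pruning step, and then to run the error analysis of Theorem~7 of \cite{Iw-arxiv} with the roles of the (noiseless) identification and estimation guarantees now played by Lemma~\ref{lem:SFT_Ident_Coef_Est_in_Noise}. Concretely: let $\{x_k\}_{k=1}^m$ be the point set of Lemma~\ref{lem:SFT_Ident_Coef_Est_in_Noise}, feed the corrupted samples $\{f(x_k)+n_k\}_{k=1}^m$ to Algorithm~3 to obtain a frequency set $S\subseteq B$ together with coefficient estimates $\{z_\omega\}_{\omega\in S}$, retain the (at most) $4s$ indices $\omega\in S$ with the largest $|z_\omega|$ — call this set $R$, and set $v_\omega=z_\omega$ for $\omega\in R$ and $v_\omega=0$ otherwise — and output the nonzero entries of $\v$ as the coefficients of $y_s(x):=\sum_{\omega\in R}v_\omega\e^{\i\omega x}$. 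Since $R\subseteq S\subseteq B\subseteq(-N/2,N/2]$ the polynomial $y_s$ automatically has degree $\leq N/2$, and since $|R|\leq 4s$ (with the convention that one simply keeps all of $S$ when $|S|<4s$) the map $\mathcal{A}$ lands in $\mathbb{C}^{4s}$; the post-processing is manifestly deterministic and simple.

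For the error bound I would abbreviate the common right-hand side of \eqref{equ:GSFT_Identify} and \eqref{equ:GSFT_Estimate} by $\eta:=\frac{\epsilon}{s}\left\|\hf-\hf^{\rm opt}_{(s/\epsilon)}\right\|_{1}+\left\|\widehat{f}-\widehat{f}\vert_{B}\right\|_{1}+\|\n\|_{\infty}$, and then use Parseval's identity $\|f-y_s\|_2=\big\|\widehat{f}-\widehat{y_s}\big\|_{\ell^2(\mathbb{Z})}$, where $\widehat{y_s}$ is supported on $R$, to split
\[
\|f-y_s\|_2^{2}=\sum_{\omega\in R}\left|\widehat{f}_\omega-z_\omega\right|^{2}+\sum_{\omega\notin B}\left|\widehat{f}_\omega\right|^{2}+\sum_{\omega\in B\setminus R}\left|\widehat{f}_\omega\right|^{2}.
\]
The first sum is at most $4s\cdot 2\eta^{2}$ by \eqref{equ:GSFT_Estimate}; the second equals $\big\|\widehat{f}-\widehat{f}\vert_{B}\big\|_2^{2}\leq\big\|\widehat{f}-\widehat{f}\vert_{B}\big\|_1^{2}$. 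For the third I would split $B\setminus R$ according to membership in $R_s^{\rm opt}(\widehat{f})$: the portion outside $R_s^{\rm opt}(\widehat{f})$ is bounded by $\big\|\hf-\hf_s^{\rm opt}\big\|_2^{2}$, and the portion inside $R_s^{\rm opt}(\widehat{f})$ — the heavy hitters discarded by the pruning — is exactly where Lemma~\ref{lem:SFT_Ident_Coef_Est_in_Noise} does its work. By \eqref{equ:GSFT_Identify} any such $\omega$ with $\left|\widehat{f}_\omega\right|>4\eta$ lies in $S$ with $|z_\omega|\geq\left|\widehat{f}_\omega\right|-\sqrt{2}\,\eta$; if it were nonetheless not among the top $4s$ of $\{|z_{\omega'}|\}_{\omega'\in S}$, then at least $3s$ kept frequencies lying outside $R_s^{\rm opt}(\widehat{f})$ would each carry magnitude $\gtrsim\left|\widehat{f}_\omega\right|-\mathcal{O}(\eta)$, and comparing their $\ell^1$ mass against the built-in $\left\|\hf-\hf^{\rm opt}_{(s/\epsilon)}\right\|_1$ term of \eqref{equ:GSFT_Identify} (this is precisely the point of the super-resolution parameter $\epsilon$, i.e.\ of Algorithm~3's $\mathcal{O}(s/\epsilon)$-length output list) forces $\left|\widehat{f}_\omega\right|\leq\mathcal{O}\!\big(\eta+\tfrac{\epsilon}{s}\left\|\hf-\hf^{\rm opt}_{(s/\epsilon)}\right\|_1\big)$. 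Summing the at most $s$ such terms, taking square roots, using $\|\cdot\|_2\leq\|\cdot\|_1$ throughout, and unpacking $\eta$ yields \eqref{eqn:StableReconError} once the absolute constants are tracked carefully down to $22$.

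The runtime claim \eqref{eqn:DetRuntime} is then immediate: the sample count $m$ and operation count of Algorithm~3 are $\mathcal{O}\!\left(s^2\log^4(N)/(\log(s/\epsilon)\,\epsilon^2)\right)$ by Lemma~\ref{lem:SFT_Ident_Coef_Est_in_Noise}, and the subsequent selection of the $4s$ largest among the $|S|$ estimates costs only $\mathcal{O}(|S|)$ (or $\mathcal{O}(|S|\log|S|)$ with a sort), which is dominated since $|S|$ is no larger than Algorithm~3's own operation count. For the Monte Carlo variant I would simply substitute the randomized variant of Algorithm~3 supplied by Lemma~\ref{lem:SFT_Ident_Coef_Est_in_Noise}: on the probability-$\ge 1-\delta$ event it satisfies exactly \eqref{equ:GSFT_Identify} and \eqref{equ:GSFT_Estimate}, so the identical head/tail analysis above delivers \eqref{eqn:StableReconError} with probability at least $1-\delta$, while its sample and operation counts are $\mathcal{O}\!\left(\tfrac{s}{\epsilon}\log^{3}(N)\log(N/\delta)\right)$ plus the negligible pruning, giving \eqref{eqn:RandRuntime}.

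I expect the main obstacle to be the heavy-hitter counting step together with the constant bookkeeping: showing that restricting the reconstruction to only $4s$ estimated frequencies — even though the true signal can have $\Theta(s/\epsilon)$ coefficients above the identification threshold $4\eta$ — costs at most $\mathcal{O}\!\big(\sqrt{s}\,\eta+\epsilon\big\|\hf-\hf^{\rm opt}_{(s/\epsilon)}\big\|_1/\sqrt{s}\big)$ in $\ell^{2}$, and that all the pieces assemble into the single clean constant $22$ in \eqref{eqn:StableReconError}. Everything else — the reduction to Lemma~\ref{lem:SFT_Ident_Coef_Est_in_Noise}, the Parseval split, the degree bound, and the runtime accounting — is routine.
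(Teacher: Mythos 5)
Your overall route is the same as the paper's.  The paper proves this theorem by rerunning the proof of Theorem~7 of \cite{Iw-arxiv} essentially verbatim with $\delta$ redefined to $\frac{\epsilon}{s}\bigl\|\hf-\hf^{\rm opt}_{(s/\epsilon)}\bigr\|_1+\bigl\|\widehat{f}-\widehat{f}\vert_B\bigr\|_1+\|\n\|_\infty$ and every exact sample-based quantity replaced by its noisy approximation $E_j$, so that the key identification/estimation inequalities become consequences of Lemma~\ref{lem:SFT_Ident_Coef_Est_in_Noise}; the Monte Carlo half is the same substitution inside Corollary~4 of \cite{Iw-arxiv}.  Your construction of $\mathcal{A}$, your Parseval decomposition, and your runtime accounting all match this reduction.

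The one place your reconstruction of the delegated internals would fail as written is the missed-heavy-hitter step.  Having discarded the compensating term $-\sum_{\tilde\omega\in R\setminus R_s^{\rm opt}}|\widehat{f}_{\tilde\omega}|^2$ that your exact Parseval identity provides, you must bound $\sum_{\omega\in R_s^{\rm opt}\setminus R}|\widehat{f}_\omega|^2$ on its own, and the displacement count you propose does not give what you claim: the $\Omega(s)$ displacing frequencies lie outside $R_s^{\rm opt}$ but may all lie inside $R_{(s/\epsilon)}^{\rm opt}$ (since $s/\epsilon\ge 2s\ge|R|$), so their $\ell^1$ mass is controlled only by $\bigl\|\hf-\hf_s^{\rm opt}\bigr\|_1$, not by $\epsilon\bigl\|\hf-\hf_{(s/\epsilon)}^{\rm opt}\bigr\|_1$.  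The per-coefficient bound this yields, $|\widehat{f}_\omega|\lesssim\eta+\bigl\|\hf-\hf_s^{\rm opt}\bigr\|_1/s$, produces an $s^{-1/2}\bigl\|\hf-\hf_s^{\rm opt}\bigr\|_1$ term in the final estimate which is not dominated by the right-hand side of \eqref{eqn:StableReconError}: for $\hf$ with $s$ large entries plus $s/\epsilon$ unit entries one has $\bigl\|\hf-\hf_s^{\rm opt}\bigr\|_1/\sqrt{s}=\sqrt{s}/\epsilon$, while $\epsilon\bigl\|\hf-\hf_{(s/\epsilon)}^{\rm opt}\bigr\|_1/\sqrt{s}=\epsilon\sqrt{s}$ and $\bigl\|\hf-\hf_s^{\rm opt}\bigr\|_2=\sqrt{s/\epsilon}$.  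So your argument, as sketched, cannot close with the claimed constants or even the claimed form.  The repair is the one the paper implicitly relies on and quotes explicitly in the proof of Theorem~\ref{thm:MainTHM}: retain the negative sum and bound the \emph{difference} $\sum_{\omega\in R_s^{\rm opt}\setminus R}|\widehat{f}_\omega|^2-\sum_{\tilde\omega\in R\setminus R_s^{\rm opt}}|\widehat{f}_{\tilde\omega}|^2\le s\bigl(8\sqrt{2}+8\bigr)^2\eta^2$ via the pairwise exchange argument of \cite{Iw-arxiv}, which involves only $\eta$ and no additional $\ell^1$ tail; that inequality is exactly where the constant $22$ originates.
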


We now have the necessary prerequisites in order to discuss our general strategy for constructing several new fully discrete SFTs.

\section{Description of the Proposed Approach}
\label{sec:Alg}
\begin{algorithm}
    \SetKwInOut{Input}{Input}
    \SetKwInOut{Output}{Output}
    
    \Input{Pointer to vector $\f \in \mathbb{C}^N$, sparsity $s \leq N$, nodes $\{ x_k \}^m_{k=1} \subset [-\pi, \pi]$ at which the given SFT algorithm $\mathcal{A}$ needs function evaluations, and $\alpha, \beta$ satisfying Lemma~\ref{lem:Periodic_Gaussian_Decay}}
    \Output{$\widehat{R}^s$, a sparse approximation of $\hf \in \mathbb{C}^N$}
    Initialize $\widehat{R}, \widehat{R}^s \leftarrow \emptyset$
    
    Set $c_1 = \frac{\beta \sqrt{\ln N}}{N}$ in the definition of periodic Gaussian $g$ from \eqref{equ:Def_Periodic_Gaussian}, and $c_2 = \frac{\alpha \sqrt{\ln N}}{2}$
    
    
    \For{$j$ from $1$ to $\lceil c_2 \rceil$}{
        $q = -\big\lceil \frac{N}{2}\big\rceil + 1 + (2j-1)\left\lceil \frac{N}{\alpha \sqrt{\ln N}} \right\rceil$  \\
        Modulate $g$ to be $\tilde{g}_q(x) := \e^{-\i q x} g(x)$ \\
        \For{each point $x \in \{ x_k \}^m_{k=1}$}{
            Use $\f$ to approximately compute $(\tilde{g}_q\ast f)(x)$ as per \S\ref{subsec:FastConvApprox} \\
        }
        Run given SFT algorithm $\mathcal{A}$ using the approximate function evaluations $\{ (\tilde{g}_q\ast f)(x_k) \}^m_{k=1}$ in order to find an $s$-sparse Fourier approximation, $\widehat{R}_{temp} \subset \mathbb{Z} \times \mathbb{C}$, of $\widehat{\tilde{g}_q\ast f}$.\\
        \For{each (frequency,Fourier coefficient) pair $(\omega,c_{\omega}) \in \widehat{R}_{temp}$}{
            \If{$\omega \in \left[q - \left\lceil\frac{N}{\alpha \sqrt{\ln N}}\right\rceil, q + \left\lceil\frac{N}{\alpha \sqrt{\ln N}}\right\rceil \right)\cap B$}{
            $\widehat{R} = \widehat{R} ~\cup~ \left\{ \left(\omega, c_{\omega}\big/~\left(\widehat{\tilde{g}_q}\right)_\omega \right) \right\}$ \\
            } 
        }
        }

    Choose the $s$ frequencies $\omega$ with $(\omega, \tilde{c}_{\omega}) \in \widehat{R}$ having the largest $\left|\tilde{c}_{\omega}\right|$, and put those $(\omega, \tilde{c}_{\omega})$ in $\widehat{R}^s$\\
    
    Return $\widehat{R}^s$
    
    \caption{A Generic Method for Discretizing a Given SFT Algorithm $\mathcal{A}$}
    \label{Alg1}
\end{algorithm}

In this section we assume that we have access to an SFT algorithm $\mathcal{A}$ which requires $m$ function evaluations of a $2 \pi$-periodic function $f: [-\pi, \pi] \rightarrow \mathbb{C}$ in order to produce an $s$-sparse approximation to $\widehat{f}$.  For any non-adaptive SFT algorithm $\mathcal{A}$ the $m$ points $\left\{ x_k \right\}^m_{k=1} \subset [-\pi, \pi]$ at which $\mathcal{A}$ needs to evaluate $f$ can be determined before $\mathcal{A}$ is actually executed. As a result, the function evaluations $\left\{ f(x_k) \right\}^m_{k=1}$ required by $\mathcal{A}$ can also be evaluated before $\mathcal{A}$ is ever run.  Indeed, if the SFT algorithm $\mathcal{A}$ is both nonadaptive \textit{and} robust to noise it suffices to {\it approximate} the function evaluations $\left\{ f(x_k) \right\}^m_{k=1}$ required by $\mathcal{A}$ before it is executed.\footnote{We hasten to point out, moreover, that similar ideas can also be employed for {\it adaptive} and noise robust SFT algorithms in order to approximately evaluate $f$ in an ``on demand'' fashion as well.  We leave the details to the interested reader.}  These simple ideas form the basis for the proposed computational approach outlined in Algorithm~\ref{Alg1}.

The objective of Algorithm~\ref{Alg1} is to use a nonadaptive and noise robust SFT algorithm $\mathcal{A}$ which requires off-grid function evaluations in order to approximately compute the DFT of a given vector $\f \in \mathbb{C}^N$, $\hf = F \f$ .  Note that computing $\hf$ is equivalent to computing the Fourier series coefficients of the degree $N$ trigonometric interpolant of $\f$.  Hereafter the $2 \pi$-periodic function $f: [-\pi, \pi] \rightarrow \mathbb{C}$ under consideration will always be this degree $N$ trigonometric interpolant of $\f$.  Our objective then becomes to approximately compute $\widehat{f}$ using $\mathcal{A}$.  Unfortunately, our given input vector $\f$ only contains equally spaced function evaluations of $f$, and so does not actually contain the function evaluations $\left\{ f(x_k) \right\}^m_{k=1}$ required by $\mathcal{A}$.  As a consequence, we are forced to try to interpolate these required function evaluations $\left\{ f(x_k) \right\}^m_{k=1}$ from the available equally spaced function evaluations $\f$.

Directly interpolating the required function evaluations $\left\{ f(x_k) \right\}^m_{k=1}$ from $\f$ for an arbitrary degree $N$ trigonometric polynomial $f$ using standard techniques appears to be either too inaccurate, or else too slow to work well in our setting.\footnote{Each function evaluation $f(x_k)$ needs to be accurately computed in just $\mathcal{O}(\log^c N)$-time in order to allow us to achieve our overall desired runtime for Algorithm~\ref{Alg1}.}  As a result, Algorithm~\ref{Alg1} instead uses $\f$ to rapidly approximate samples from the convolution of the unknown trigonometric polynomial $f$ with (several modulations of) a known filter function $g$.  Thankfully, all of the evaluations $\left\{ (g\ast f)(x_k) \right\}^m_{k=1}$ can be approximated very accurately using only the data in $\f$ in just $\mathcal{O}(m \log N)$-time when $g$ is chosen carefully enough (see \S\ref{subsec:FastConvApprox} below).
The given SFT algorithm $\mathcal{A}$ is then used to approximate the Fourier coefficients of $g\ast f$ for each modulation of $g$ using these approximate evaluations.  Finally, $\hf$ is then approximated using the recovered sparse approximation for each $\widehat{g\ast f}$ combined with our a priori knowledge of $\widehat{g}$.

\subsection{Rapidly and Accurately Evaluating $f\ast g$}
\label{subsec:FastConvApprox}
In this section we will carefully consider the approximation of $\left(f\ast g\right)\left(x\right)$ by a {\it severely truncated version} of the semi-discrete convolution sum
\begin{equation}
\frac{1}{N}\sum^{N-1}_{j=0}f\left(-\pi+\frac{2\pi j}{N}\right)g\left(x + \pi - \frac{2\pi j}{N} \right)
\label{equ:UntruncatedFiniteSum}
\end{equation}
for any given value of $x \in [-\pi, \pi]$.    Our goal is to determine exactly how many terms of this finite sum we actually need in order to obtain an accurate approximation of $f\ast g$ at an arbitrary $x$-value.  More specifically, we aim to use as few terms from this sum as absolutely possible in order to ensure, e.g., an approximation error of size $\mathcal{O}(N^{-2})$.  

Without loss of generality, let us assume that $N=2M+1$ is odd -- this
allows us to express $B$, the set of $N$ Fourier modes about zero,
as
\[
B:=\left(-\left\lceil \frac{N}{2}\right\rceil ,\left\lfloor \frac{N}{2}\right\rfloor \right]\cap\mathbb{Z}=\left[-M,M\right]\cap\mathbb{Z}.
\]
In the lemmas and theorems below the function $f:\left[-\pi,\pi\right]\to\mathbb{C}$ will always denote a degree-$N$ trigonometric polynomial of the form
\[
f\left(x\right)=\sum_{\omega\in B}\widehat{f}_{\omega}\e^{\i\omega x}.
\]
Furthermore, $g$ will always denote the periodic Gaussian as defined above in \eqref{equ:Def_Periodic_Gaussian}. Finally, we will also
make use of the Dirichlet kernel $D_{M}:\mathbb{R}\to\mathbb{C}$,
defined by
\[
D_{M}\left(y\right)=\frac{1}{2\pi}\sum_{n=-M}^{M}\e^{\i ny}=\frac{1}{2\pi}\sum_{n\in B}\e^{\i ny}.
\]

The relationship between trigonometric polynomials such as $f$ and the Dirichlet kernel $D_{M}$ is the subject of
the following lemma.

\begin{lem}
Let $h: [-\pi, \pi] \rightarrow \mathbb{C}$ have $\widehat{h}_{\omega} = 0$ for all $\omega \notin B$, and define the set of points $\left\{ y_{j}\right\} _{j=0}^{2M}=\left\{ -\pi+\frac{2\pi j}{N} \right\} _{j=0}^{2M}$.  Then,
\[
2\pi\left(h\ast D_{M}\right)\left(x\right)~=~h\left(x\right)~=~\frac{2\pi}{N}\sum_{j=0}^{2M}h\left(y_{j}\right)D_{M}\left(x - y_{j} \right)
\]
holds for all $x\in\left[-\pi,\pi\right]$.
\label{lem:f_using_D_M}
\end{lem}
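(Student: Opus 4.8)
The statement has two parts: first, the identity $2\pi(h\ast D_M)(x) = h(x)$ for any trigonometric polynomial $h$ of degree at most $M$; and second, the quadrature identity $h(x) = \frac{2\pi}{N}\sum_{j=0}^{2M} h(y_j) D_M(x-y_j)$ where the $y_j$ are the $N = 2M+1$ equally spaced nodes. The plan is to handle both parts by passing to Fourier coefficients, using the key fact (already recorded in \S\ref{sec:setup}) that $\widehat{h\ast D_M}_\omega = \widehat{h}_\omega \widehat{D_M}_\omega$.

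For the first identity, I would compute $\widehat{D_M}_\omega$ directly from the definition $D_M(y) = \frac{1}{2\pi}\sum_{n=-M}^{M} \e^{\i n y}$: by orthogonality of the exponentials on $[-\pi,\pi]$, one gets $\widehat{D_M}_\omega = \frac{1}{2\pi}$ for $|\omega|\le M$ and $0$ otherwise. Hence $\widehat{h\ast D_M}_\omega = \frac{1}{2\pi}\widehat{h}_\omega$ for $|\omega|\le M$ and $0$ otherwise; since $\widehat{h}_\omega = 0$ for $\omega\notin B$ by hypothesis, this says $\widehat{2\pi(h\ast D_M)} = \widehat{h}$ as sequences, and since both sides are trigonometric polynomials (finite Fourier series), equality of Fourier coefficients gives equality of functions pointwise: $2\pi(h\ast D_M)(x) = h(x)$ for all $x$.

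For the second (quadrature) identity, the cleanest route is to observe that the sum $\frac{2\pi}{N}\sum_{j=0}^{2M} h(y_j) D_M(x-y_j)$ is exactly the $N$-point rectangle-rule approximation to the integral $2\pi(h\ast D_M)(x) = \frac{1}{2\pi}\int_{-\pi}^{\pi} h(y) \cdot 2\pi D_M(x-y)\,\frac{N}{N}\,dy$ — more precisely, $\frac{2\pi}{N}\sum_j \phi(y_j)$ with $\phi(y) := h(y)D_M(x-y)$. The product $\phi$ is a trigonometric polynomial in $y$ of degree at most $M + M = 2M < N$, and the $N$-point rectangle rule integrates every trigonometric polynomial of degree $< N$ exactly (the error terms come only from modes $\e^{\i k y}$ with $k$ a nonzero multiple of $N$, which alias to a nonzero constant, and there are none here). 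Therefore $\frac{2\pi}{N}\sum_{j=0}^{2M} h(y_j) D_M(x-y_j) = \frac{1}{2\pi}\int_{-\pi}^{\pi} h(y) D_M(x-y)\,dy = (h\ast D_M)(x) = \frac{h(x)}{2\pi}\cdot 2\pi$, i.e. equals $h(x)$ by the first part. Alternatively one can avoid quoting the aliasing fact and instead expand $D_M(x-y_j) = \frac{1}{2\pi}\sum_{n\in B}\e^{\i n(x-y_j)}$, swap the two finite sums, and use the discrete orthogonality relation $\frac{1}{N}\sum_{j=0}^{N-1}\e^{\i(n-\omega)\cdot 2\pi j/N} = \delta_{n\equiv\omega \bmod N}$ together with $n,\omega\in B$ (so $n\equiv\omega$ forces $n=\omega$); this collapses everything to $\sum_{\omega\in B}\widehat{h}_\omega\e^{\i\omega x} = h(x)$.

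The main obstacle is essentially bookkeeping rather than conceptual: one must be careful that $B = [-M,M]\cap\mathbb{Z}$ has exactly $N = 2M+1$ elements and that the degree of the product $h(\cdot)D_M(x-\cdot)$ stays strictly below $N$, so that no aliasing occurs; and one must keep the normalization constants ($\frac{1}{2\pi}$ in the convolution, $\frac{1}{N}$ versus $\frac{2\pi}{N}$ in the quadrature sum) straight throughout. Neither step requires any estimate — both identities are exact — so there is no genuine analytic difficulty, only the need to track the three competing normalizations consistently.
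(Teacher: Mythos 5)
Your proposal is correct and follows essentially the same route as the paper: the first identity via $2\pi(\widehat{D_M})_\omega=\chi_B(\omega)$ and the convolution theorem, and the second via the exactness of the $N$-point equispaced rule on trigonometric polynomials of degree $<N$, which is the same discrete-orthogonality fact the paper packages as the DFT formula $\widehat{h}_\omega=\frac{1}{N}\sum_j h(y_j)\e^{-\i\omega y_j}$. The normalizations all check out, so nothing further is needed.
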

\begin{proof}
By the definition of $D_{M}$, we trivially have $2\pi \left( \widehat{D_{M}} \right)_{\omega}=\chi_{B}\left(\omega\right)$ $\forall \omega \in \mathbb{Z}$.  Thus,
\[
\widehat{h}=2\pi \cdot \widehat{h}\circ\widehat{D_{M}}=2\pi \cdot \widehat{h\ast D_{M}}
\]
where, as before, $\circ$ denotes the Hadamard product, and $\ast$
denotes convolution. This yields $h\left(x\right)=2\pi\left(h\ast D_{M}\right)\left(x\right)$ and so establishes the first equality above.
To establish the second equality above, recall from \eqref{equ:Disc_Fourier_Def} that for any $\omega\in B$ we will have
\[
\widehat{h}_{\omega}=\frac{(-1)^\omega}{N}\sum_{j=0}^{2M}h\left(-\pi+\frac{2\pi j}{N}\right)\e^{\frac{-2\pi\i j\omega}{N}}=\frac{1}{N}\sum_{j=0}^{2M}h\left(y_{j}\right)\e^{-\i\omega y_{j}},
\]
since $h$ is a trigonometric polynomial.  Thus, given $x\in\left[-\pi,\pi\right]$ one has
\begin{align*}
h\left(x\right) & =\sum_{\omega\in B}\widehat{h}_{\omega}\e^{\i\omega x} 
=\frac{1}{N}\sum_{j=0}^{2M}\left(h\left(y_{j}\right)\sum_{\omega\in B}\e^{\i\omega\left(x-y_{j} \right)}\right)
=\frac{2\pi}{N}\sum_{j=0}^{2M}h\left(y_{j}\right)D_{M}\left(x-y_{j} \right).
\end{align*}
We now have the desired result.
\end{proof}

We can now write a formula for $g\ast f$ which only depends on $N$ evaluations of $f$ in $[-\pi, \pi]$.

\begin{lem}
\label{lem:convolution_using_D_M}Given the set of equally spaced
points $\left\{ y_{j}\right\} ^{2M}_{j = 0}=\left\{ -\pi+\frac{2\pi j}{N} \right\} _{j=0}^{2M}$ 
one has that 
\[
\left(g\ast f\right)\left(x\right)=\frac{1}{N}\sum_{j=0}^{2M}f\left(y_{j}\right)\int_{-\pi}^{\pi}g\left(x-u-y_{j}\right)D_{M}\left(u\right)du
\]
for all $x\in\left[-\pi,\pi\right]$.
\end{lem}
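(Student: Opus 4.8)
The plan is to reduce the claimed identity to Lemma~\ref{lem:f_using_D_M} by expanding $f$ \emph{inside} the convolution integral as a finite sum of shifted Dirichlet kernels, and then to tidy up the resulting iterated integral by a $2\pi$-periodic change of variables.

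First I would write out the convolution directly from its definition,
\[
\left(g\ast f\right)\left(x\right)=\frac{1}{2\pi}\int_{-\pi}^{\pi}g\left(x-y\right)f\left(y\right)\,dy.
\]
Since $f$ is a degree-$N$ trigonometric polynomial, i.e. $\widehat{f}_{\omega}=0$ for all $\omega\notin B$, Lemma~\ref{lem:f_using_D_M} applies with $h=f$ and gives $f\left(y\right)=\frac{2\pi}{N}\sum_{j=0}^{2M}f\left(y_{j}\right)D_{M}\left(y-y_{j}\right)$ for every $y\in[-\pi,\pi]$. Substituting this into the integral and interchanging the finite sum with the integral (legitimate because there are only $N$ terms) yields
\[
\left(g\ast f\right)\left(x\right)=\frac{1}{N}\sum_{j=0}^{2M}f\left(y_{j}\right)\int_{-\pi}^{\pi}g\left(x-y\right)D_{M}\left(y-y_{j}\right)\,dy.
\]

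Next I would perform the substitution $u=y-y_{j}$ in each of these $N$ inner integrals, so that $g\left(x-y\right)=g\left(x-u-y_{j}\right)$ and the domain of integration becomes $[-\pi-y_{j},\pi-y_{j}]$. The key observation is that the integrand $u\mapsto g\left(x-u-y_{j}\right)D_{M}\left(u\right)$ is $2\pi$-periodic in $u$, because both $g$ and $D_{M}$ are $2\pi$-periodic; hence its integral over the length-$2\pi$ interval $[-\pi-y_{j},\pi-y_{j}]$ equals its integral over $[-\pi,\pi]$. This produces exactly the asserted formula.

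I do not expect any genuine obstacle here. The only points requiring a word of care are the shift of the integration limits back to $[-\pi,\pi]$, which relies purely on periodicity, and the (trivial) interchange of a finite sum with an integral; everything else is bookkeeping. One could alternatively argue on the Fourier side by matching coefficients of $g\ast f$, but routing through Lemma~\ref{lem:f_using_D_M} keeps the argument shortest and makes the subsequent truncation analysis of \S\ref{subsec:FastConvApprox} cleaner to set up.
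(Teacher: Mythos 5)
Your proposal is correct and follows essentially the same route as the paper's own proof: expand $f$ via Lemma~\ref{lem:f_using_D_M}, interchange the finite sum with the integral, and substitute $u=y-y_j$, using the $2\pi$-periodicity of $g$ and $D_M$ to restore the integration domain to $[-\pi,\pi]$. No gaps.
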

\begin{proof}
By Lemma \ref{lem:f_using_D_M}, we have 
\begin{align*}
\left(g\ast f\right)\left(x\right) & =\frac{1}{2\pi}\int_{-\pi}^{\pi}g\left(x-y\right)f\left(y\right)dy =\frac{1}{N}\int_{-\pi}^{\pi}g\left(x-y\right)\sum^{2M}_{j = 0}f\left(y_{j}\right)D_{M}\left(y-y_{j}\right)dy\\
 & =\frac{1}{N}\sum^{2M}_{j = 0}f\left(y_{j}\right)\int_{-\pi}^{\pi}g\left(x-u-y_{j}\right)D_{M}\left(u\right)du.
\end{align*}
The last equality holds after a change of variables since $g$ and $D_{M}$ are both $2\pi-$periodic.
\end{proof}

The next two lemmas will help us bound the error produced by discretizing the integral weights present in the finite sum provided by Lemma~\ref{lem:convolution_using_D_M} above.  More specifically, they will ultimately allow us to approximate the sum in Lemma~\ref{lem:convolution_using_D_M} by the sum in \eqref{equ:UntruncatedFiniteSum}.

\begin{lem}
\label{lem:dirichlet_integral_as_discrete_sum}
Let $x\in\left[-\pi,\pi\right]$ and $y_{j} = -\pi+\frac{2\pi j}{N}$ for some $j = 0, \dots, 2M$.  Then,
\[
\int_{-\pi}^{\pi}g\left(x-u-y_{j}\right)D_{M}\left(u\right)du=\sum_{n\in B}\widehat{g}_{n}\e^{\i n\left(x - y_{j}\right)}.
\]
\end{lem}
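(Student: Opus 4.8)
The plan is to recognize the left-hand integral as a constant multiple of the convolution of $g$ with the Dirichlet kernel $D_M$ evaluated at $x - y_j$, and then to invoke the Fourier convolution identity. Concretely, set $z := x - y_j$. Performing the change of variables $u \mapsto z - u$ — which is legitimate because both $g$ and $D_M$ are $2\pi$-periodic, so the integral of their product over any interval of length $2\pi$ is unchanged — one obtains
\[
\int_{-\pi}^{\pi} g\left(x - u - y_j\right) D_M(u)\, du ~=~ 2\pi\left(g \ast D_M\right)(z).
\]

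Next I would use the computation already recorded in the proof of Lemma~\ref{lem:f_using_D_M}, namely that $2\pi\left(\widehat{D_M}\right)_\omega = \chi_B(\omega)$ for all $\omega \in \Z$, together with the convolution rule $\widehat{g \ast D_M} = \widehat{g} \circ \widehat{D_M}$. This gives $\left(\widehat{g \ast D_M}\right)_\omega = \frac{1}{2\pi}\widehat{g}_\omega\, \chi_B(\omega)$, and since $\widehat{g} \in \ell^1$ by Lemma~\ref{lem:Periodic_Gaussian_FC} the Fourier series of $g \ast D_M$ converges absolutely and uniformly, so
\[
2\pi\left(g \ast D_M\right)(z) ~=~ 2\pi \sum_{\omega \in \Z} \frac{1}{2\pi}\, \widehat{g}_\omega\, \chi_B(\omega)\, \e^{\i \omega z} ~=~ \sum_{\omega \in B} \widehat{g}_\omega\, \e^{\i \omega z},
\]
which is exactly the claimed right-hand side after substituting $z = x - y_j$.

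An equivalent and fully self-contained route, should one prefer to avoid citing the convolution identity at all, is to expand the Dirichlet kernel directly as the finite sum $D_M(u) = \frac{1}{2\pi}\sum_{n \in B}\e^{\i n u}$, interchange the (finite) sum with the integral, and evaluate each resulting integral $\int_{-\pi}^{\pi} g\left(x - u - y_j\right)\e^{\i n u}\, du$ by the same periodic change of variables; each such integral equals $2\pi\, \widehat{g}_n\, \e^{\i n(x - y_j)}$ directly from the definition of $\widehat{g}_n$, and summing over $n \in B$ produces the result. There is essentially no genuine obstacle here beyond careful bookkeeping: the one point worth a word of care is that the change of variables must be carried out modulo $2\pi$, so that one still integrates $g$ against the complex exponential over a full period — which is precisely what is needed to recover the Fourier coefficient $\widehat{g}_n$ with the correct normalization.
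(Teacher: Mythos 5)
Your first route is exactly the paper's proof: it identifies the integral as $2\pi\left(D_M \ast g\right)\left(x - y_j\right)$, uses $2\pi\left(\widehat{D_M}\right)_\omega = \chi_B\left(\omega\right)$ together with the convolution identity, and reads off the finite Fourier sum. The extra remarks (absolute convergence via $\widehat{g}\in\ell^1$, and the alternative term-by-term expansion of $D_M$) are correct but not needed beyond what the paper already does.
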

\begin{proof}
Recalling that $2\pi \left( \widehat{D_{M}} \right)_{\omega}=\chi_{B}\left(\omega\right)$ for all $\omega \in \mathbb{Z}$ we have that
\begin{align*}
\int_{-\pi}^{\pi}g\left(x-u-y_{j}\right)D_{M}\left(u\right)du & = 2 \pi \left( D_{M} \ast g \right) \left( x - y_{j} \right) = \sum_{n\in \mathbb{Z}} \widehat{g}_{n} \chi_{B}\left(n\right)\e^{\i n\left(x - y_{j}\right)} = \sum_{n\in B}\widehat{g}_{n}\e^{\i n\left(x - y_{j}\right)}.
\end{align*}
\end{proof}

\begin{lem}
\label{lem:gaussian_integral_bound}Denote $I\left(a\right):=\int_{-a}^{a}\e^{-x^{2}}dx$
for $a>0$; then 
\[
\pi\left(1-\e^{-a^{2}}\right)<I^{2}\left(a\right)<\pi\left(1-\e^{-2a^{2}}\right).
\]
\end{lem}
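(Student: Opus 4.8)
The plan is to recognize $I^2(a)$ as a two\-dimensional Gaussian integral over a square, and then to sandwich that square between two concentric disks on which the integral can be evaluated exactly in polar coordinates.

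First I would apply Fubini's theorem to write
\[
I^2(a) = \left(\int_{-a}^a \e^{-x^2}\,dx\right)\left(\int_{-a}^a \e^{-y^2}\,dy\right) = \iint_{Q_a} \e^{-(x^2+y^2)}\,dx\,dy,
\]
where $Q_a := [-a,a]^2$ is the axis\-aligned square of side length $2a$ centered at the origin. Since the integrand $\e^{-(x^2+y^2)}$ is everywhere positive, the value of its integral over a measurable region is strictly monotone under inclusion whenever the symmetric difference has positive Lebesgue measure.

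Next I would invoke the two elementary containments $D_a \subsetneq Q_a \subsetneq D_{a\sqrt{2}}$, where $D_R := \{(x,y) : x^2 + y^2 \le R^2\}$ denotes the closed disk of radius $R$ about the origin: the inscribed disk $D_a$ meets $Q_a$ only at the four edge midpoints, while the circumscribed disk $D_{a\sqrt{2}}$ passes through the four corners $(\pm a, \pm a)$, so both $Q_a \setminus D_a$ and $D_{a\sqrt{2}} \setminus Q_a$ have positive area. Together with the positivity of the integrand this gives
\[
\iint_{D_a} \e^{-(x^2+y^2)}\,dx\,dy ~<~ I^2(a) ~<~ \iint_{D_{a\sqrt{2}}} \e^{-(x^2+y^2)}\,dx\,dy.
\]
It then remains only to evaluate the two disk integrals, which is immediate in polar coordinates: for any $R>0$,
\[
\iint_{D_R} \e^{-(x^2+y^2)}\,dx\,dy = \int_0^{2\pi}\!\!\int_0^R \e^{-\rho^2}\,\rho\,d\rho\,d\theta = 2\pi\cdot\tfrac12\left(1 - \e^{-R^2}\right) = \pi\left(1 - \e^{-R^2}\right).
\]
Taking $R = a$ in the lower bound and $R = a\sqrt{2}$ in the upper bound then yields precisely $\pi\bigl(1 - \e^{-a^2}\bigr) < I^2(a) < \pi\bigl(1 - \e^{-2a^2}\bigr)$, as claimed.

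I do not anticipate any genuine obstacle: this is the classical polar\-coordinates trick for the Gaussian, adapted to a finite interval. The one point that deserves a sentence of care is the \emph{strictness} of both inequalities, which follows immediately from the Gaussian integrand being everywhere positive together with the fact that the three nested regions $D_a$, $Q_a$, $D_{a\sqrt{2}}$ differ pairwise by sets of positive Lebesgue measure.
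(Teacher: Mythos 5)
Your proposal is correct and is essentially identical to the paper's own proof: both apply Fubini to write $I^2(a)$ as a Gaussian integral over the square $[-a,a]^2$ and then compare with the inscribed disk of radius $a$ and the circumscribed disk of radius $a\sqrt{2}$, evaluated in polar coordinates. Your version merely spells out the upper bound and the strictness of the inequalities, which the paper leaves as ``a similar argument.''
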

\begin{proof}
Let $a>0$ and observe that
\begin{align*}
I^{2}\left(a\right) =\int_{-a}^{a}\int_{-a}^{a}\e^{-x^{2}-y^{2}}dx dy >\iint_{\left\{ x^{2}+y^{2}\le a^{2}\right\} }\e^{-\left(x^{2}+y^{2}\right)}dxdy 
=\pi\left(1-\e^{-a^{2}}\right).
\end{align*}
The first equality holds by Fubini's theorem, and the inequality follows
simply by integrating a positive function over a disk of radius $a$
as opposed to a square of sidelength $2a$. A similar argument yields
the upper bound.
\end{proof}

We are now ready to bound the difference between the integral weights present in the finite sum provided by Lemma~\ref{lem:convolution_using_D_M}, and the $g\left(x - y_j \right)$-weights present in the sum \eqref{equ:UntruncatedFiniteSum}.

\begin{lem}
\label{lem:truncation_error}Choose any $\ensuremath{\tau\in\left(0,\frac{1}{\sqrt{2\pi}}\right)}$,
$\alpha\in\left[1,\frac{N}{\sqrt{\ln N}}\right]$, and $\ensuremath{\beta\in\left(0,\alpha\sqrt{\frac{\ln\left(1/\tau\sqrt{2\pi}\right)}{2}}~\right]}$.
Let $\ensuremath{c_{1}=\frac{\beta\sqrt{\ln N}}{N}}$ in the definition
of the periodic Gaussian $g$ so that
\[
g\left(x\right)=\frac{N}{\beta\sqrt{\ln N}}\sum_{n=-\infty}^{\infty}\e^{-\frac{\left(x-2n\pi\right)^{2}N^{2}}{2\beta^{2}\ln N}}.
\]
Then for all $x\in\left[-\pi,\pi\right]$ and $y_{j} = -\pi+\frac{2\pi j}{N}$,
\[
\left|g\left(x - y_{j}\right)-\int_{-\pi}^{\pi}g\left(x- u - y_{j}\right)D_{M}\left(u\right)du\right| < \frac{N^{1-\frac{\beta^{2}}{18}}}{\beta\sqrt{\ln N}}.
\]
\end{lem}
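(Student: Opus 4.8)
The plan is to recognize the left-hand quantity as the modulus of the tail of the Fourier series of $g$ and then estimate that tail using the Gaussian decay of $\widehat{g}$ recorded in Lemma~\ref{lem:Periodic_Gaussian_FC}. First I would invoke Lemma~\ref{lem:dirichlet_integral_as_discrete_sum} to rewrite the integral term exactly as $\sum_{n\in B}\widehat{g}_{n}\e^{\i n(x-y_{j})}$, while $g$, being $C^{\infty}$ (a convergent superposition of Gaussians), has a Fourier series converging to it absolutely and uniformly, so $g(x-y_{j})=\sum_{n\in\mathbb{Z}}\widehat{g}_{n}\e^{\i n(x-y_{j})}$. Subtracting and using $B=[-M,M]\cap\mathbb{Z}$, the difference equals $\sum_{|n|>M}\widehat{g}_{n}\e^{\i n(x-y_{j})}$; hence, uniformly in $x\in[-\pi,\pi]$ and in $j$, the triangle inequality together with Lemma~\ref{lem:Periodic_Gaussian_FC} gives $\bigl|g(x-y_{j})-\int_{-\pi}^{\pi}g(x-u-y_{j})D_{M}(u)\,du\bigr|\le\sum_{|n|>M}|\widehat{g}_{n}|=\frac{2}{\sqrt{2\pi}}\sum_{n=M+1}^{\infty}\e^{-c_{1}^{2}n^{2}/2}$.

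Next I would bound this one-sided Gaussian sum by an integral. Since $t\mapsto\e^{-c_{1}^{2}t^{2}/2}$ is decreasing on $[0,\infty)$, term-by-term comparison yields $\sum_{n=M+1}^{\infty}\e^{-c_{1}^{2}n^{2}/2}\le\int_{M}^{\infty}\e^{-c_{1}^{2}t^{2}/2}\,dt$, and the substitution $x=c_{1}t/\sqrt{2}$ turns this into $\frac{\sqrt{2}}{c_{1}}\int_{c_{1}M/\sqrt{2}}^{\infty}\e^{-x^{2}}\,dx$. For the latter I would use the elementary tail estimate $\int_{a}^{\infty}\e^{-x^{2}}\,dx<\frac{\sqrt{\pi}}{2}\e^{-a^{2}}$, which follows from the lower bound $I^{2}(a)>\pi(1-\e^{-a^{2}})$ of Lemma~\ref{lem:gaussian_integral_bound} combined with $\int_{a}^{\infty}\e^{-x^{2}}\,dx=\frac{1}{2}(\sqrt{\pi}-I(a))$ and the inequality $1-\sqrt{1-t}\le t$ for $t\in[0,1]$. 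Collecting the constants $\frac{2}{\sqrt{2\pi}}$, $\frac{\sqrt{2}}{c_{1}}$, and $\frac{\sqrt{\pi}}{2}$, everything collapses neatly to $\frac{2}{\sqrt{2\pi}}\sum_{n=M+1}^{\infty}\e^{-c_{1}^{2}n^{2}/2}<\frac{1}{c_{1}}\e^{-c_{1}^{2}M^{2}/2}$.

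Finally I would substitute $c_{1}=\frac{\beta\sqrt{\ln N}}{N}$ and $M=\frac{N-1}{2}$ (recall the running assumption $N=2M+1$), obtaining the bound $\frac{N}{\beta\sqrt{\ln N}}\,N^{-\beta^{2}(N-1)^{2}/(8N^{2})}$, and then observe that $\frac{(N-1)^{2}}{8N^{2}}\ge\frac{1}{18}$ precisely when $\frac{N-1}{N}\ge\frac{2}{3}$, i.e. for all $N\ge3$ — and an odd $N$ with $\ln N>0$ forces exactly $N\ge3$. Since $N>1$, decreasing the exponent only increases the power, so $N^{-\beta^{2}(N-1)^{2}/(8N^{2})}\le N^{-\beta^{2}/18}$, and the claimed bound $\frac{N^{1-\beta^{2}/18}}{\beta\sqrt{\ln N}}$ follows. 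There is no serious obstacle; the only points demanding care are the justification of the pointwise/uniform Fourier expansion of $g$ and the bookkeeping of constant factors in the tail estimate. Note that $\frac{1}{18}$ is a deliberately round choice — it is tight at $N=3$ and becomes increasingly generous as $N$ grows (the argument in fact essentially yields exponent $\beta^{2}/8$), leaving ample slack to absorb any looseness.
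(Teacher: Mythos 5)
Your proposal is correct and follows essentially the same route as the paper: rewrite the difference as the Fourier tail $\sum_{|n|>M}\widehat{g}_{n}\e^{\i n(x-y_{j})}$ via Lemma~\ref{lem:dirichlet_integral_as_discrete_sum}, compare the resulting Gaussian sum to an integral, control that integral with Lemma~\ref{lem:gaussian_integral_bound} together with $1-\sqrt{1-y}\le y$, and use $M/N\ge 1/3$ to land on the exponent $\beta^{2}/18$. Your packaging of the tail bound as the standalone estimate $\int_{a}^{\infty}\e^{-x^{2}}\,dx<\frac{\sqrt{\pi}}{2}\e^{-a^{2}}$ is just a tidier presentation of the same computation, and your constant bookkeeping checks out.
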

\begin{proof}
Using Lemma~\ref{lem:dirichlet_integral_as_discrete_sum} we calculate
\begin{align*}
 \left|g\left(x - y_{j}\right)-\int_{-\pi}^{\pi}g\left(x-u-y_{j}\right)D_{M}\left(u\right)du\right|& =\left|g\left(x-y_{j}\right)-\sum_{n\in B}\widehat{g}_{n}\e^{\i n\left(x - y_{j}\right)}\right| =\left|\sum_{n\in B^{c}}\widehat{g}_{n}\e^{\i n\left(x-y_{j}\right)}\right|\\
 & \leq\frac{1}{\sqrt{2\pi}}\sum_{\left|n\right|>M}\e^{-\frac{c_{1}^{2}n^{2}}{2}} {\rm \hspace{1in}(Using ~Lemma~\ref{lem:Periodic_Gaussian_FC})}\\
 & \leq\frac{2}{\sqrt{2\pi}}\int_{M}^{\infty}\e^{-\frac{c_{1}^{2}n^{2}}{2}}dn\\
 & =\sqrt{\frac{2}{\pi}}\int_{M}^{\infty}\e^{-\frac{\beta^{2}n^{2}\ln N}{2N^{2}}}dn.
\end{align*}

Upon the change of variable $v=\frac{\beta n\sqrt{\ln N}}{\sqrt{2}N}$,
we get that
\begin{align*}
 \left|g\left(x-y_{j}\right)-\int_{-\pi}^{\pi}g\left(x-u-y_{j}\right)D_{M}\left(u\right)du\right| 
& \leq\sqrt{\frac{2}{\pi}}\frac{\sqrt{2}N}{\beta\sqrt{\ln N}}\int_{\frac{\beta M\sqrt{\ln N}}{\sqrt{2}N}}^{\infty}\e^{-v^{2}}dv\\
 & =\frac{2N}{\beta\sqrt{\pi\ln N}}\frac{1}{2}\left(\int_{-\infty}^{\infty}\e^{-v^{2}}dv-\int_{-\frac{\beta M\sqrt{\ln N}}{\sqrt{2}N}}^{\frac{\beta M\sqrt{\ln N}}{\sqrt{2}N}}\e^{-v^{2}}dv\right)\\
 & <\frac{N}{\beta\sqrt{\pi\ln N}}\left(\sqrt{\pi}-\sqrt{\pi\left(1-\e^{-\frac{\beta^{2}M^{2}\ln N}{2N^{2}}}\right)}\right)\\
 & =\frac{N}{\beta\sqrt{\ln N}}\left(1-\sqrt{1-N^{-\frac{\beta^{2}M^{2}}{2N^{2}}}}\right)
\end{align*}
where the last inequality follows from Lemma \ref{lem:gaussian_integral_bound}.  Noting now that
\[
y\in\left[0,1\right]\implies1-\sqrt{1-y}\leq y,
\]
and that $\frac{N}{M}=2+\frac{1}{M}\in\left(2,3\right]$ for all $M \in \mathbb{Z}^+$, we can further see that
\begin{align*}
\frac{N}{\beta\sqrt{\ln N}}\left(1-\sqrt{1-N^{-\frac{\beta^{2}M^{2}}{2N^{2}}}}\right) \leq \frac{N}{\beta\sqrt{\ln N}}N^{-\frac{\beta^{2}M^{2}}{2N^{2}}} \leq \frac{N^{1-\frac{\beta^{2}}{18}}}{\beta\sqrt{\ln N}}
\end{align*}
also always holds.
\end{proof}

With the lemmas above we can now prove that \eqref{equ:UntruncatedFiniteSum}
can be used to approximate $\left(g\ast f\right)\left(x\right)$ for all $x \in [-\pi, \pi]$ with
controllable error.

\begin{thm}
\label{thm:convolution_error}Let $p \geq 1$.  Using the same values of the parameters
from Lemma~\ref{lem:truncation_error} above, one has
\[
\left|\left(g\ast f\right)\left(x\right)-\frac{1}{N}\sum^{2M}_{j = 0}f\left(y_{j}\right)g\left(x - y_{j}\right)\right|\leq\frac{\left\Vert \f\right\Vert _{p}}{\beta\sqrt{\ln N}}N^{1-\frac{\beta^{2}}{18} - \frac{1}{p}}
\]
for all $x\in\left[-\pi,\pi\right]$.
\end{thm}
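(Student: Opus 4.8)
The plan is to reduce the claimed estimate to Lemma~\ref{lem:truncation_error} by combining it with the exact formula for $g \ast f$ from Lemma~\ref{lem:convolution_using_D_M} and then applying H\"older's inequality. First I would write, using Lemma~\ref{lem:convolution_using_D_M},
\[
\left(g\ast f\right)\left(x\right)-\frac{1}{N}\sum^{2M}_{j = 0}f\left(y_{j}\right)g\left(x - y_{j}\right) = \frac{1}{N}\sum^{2M}_{j=0} f(y_j) \left( \int_{-\pi}^{\pi} g(x-u-y_j) D_M(u)\, du - g(x-y_j) \right),
\]
so that the left-hand side is exactly an inner product between the vector $\f = (f(y_j))_{j=0}^{2M}$ and the vector of weight-differences $w_j := \int_{-\pi}^{\pi} g(x-u-y_j) D_M(u)\, du - g(x-y_j)$, scaled by $1/N$.

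Next I would take absolute values and apply H\"older's inequality with exponents $p$ and $p'$ (where $1/p + 1/p' = 1$), obtaining
\[
\left| \left(g\ast f\right)\left(x\right)-\frac{1}{N}\sum^{2M}_{j = 0}f\left(y_{j}\right)g\left(x - y_{j}\right) \right| \leq \frac{1}{N} \left\| \f \right\|_p \left\| w \right\|_{p'}.
\]
Now Lemma~\ref{lem:truncation_error} gives the uniform bound $|w_j| < \frac{N^{1-\beta^2/18}}{\beta \sqrt{\ln N}}$ for every $j$, valid for all $x \in [-\pi,\pi]$ under the stated parameter assumptions. Since $w$ has $N = 2M+1$ entries, $\left\| w \right\|_{p'} \leq N^{1/p'} \max_j |w_j| < N^{1/p'} \cdot \frac{N^{1-\beta^2/18}}{\beta \sqrt{\ln N}}$. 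Substituting and using $1/p' = 1 - 1/p$ yields
\[
\frac{1}{N} \left\| \f \right\|_p \cdot N^{1-1/p} \cdot \frac{N^{1-\beta^2/18}}{\beta \sqrt{\ln N}} = \frac{\left\| \f \right\|_p}{\beta \sqrt{\ln N}} N^{1 - \beta^2/18 - 1/p},
\]
which is exactly the claimed bound. (The case $p = \infty$, if one wanted it, would use $\|w\|_\infty = \max_j |w_j|$ directly with $1/p = 0$, but since $p \geq 1$ is assumed the $\ell^\infty$ norm on $w$ appears only through the trivial bound $\|w\|_{p'} \le N^{1/p'}\|w\|_\infty$.)

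Honestly there is no serious obstacle here: all the analytic work is already done in Lemma~\ref{lem:truncation_error}, and what remains is bookkeeping — correctly splitting the $1/N$ factor, counting that $w$ has exactly $N$ coordinates, and tracking the H\"older exponents so the powers of $N$ combine to $N^{1-\beta^2/18-1/p}$. The only place to be slightly careful is making sure the per-term bound from Lemma~\ref{lem:truncation_error} is genuinely uniform in both $j$ and $x$ (it is, as stated), so that pulling it out of the $\ell^{p'}$ norm is legitimate; and noting that the strict inequality in Lemma~\ref{lem:truncation_error} degrades to a non-strict $\leq$ in the final statement, which is harmless.
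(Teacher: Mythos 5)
Your proposal is correct and follows essentially the same route as the paper: the same decomposition via Lemma~\ref{lem:convolution_using_D_M}, the same uniform weight bound from Lemma~\ref{lem:truncation_error}, and the same H\"older step (the paper just applies the uniform bound to each $w_j$ first and then uses $\|\f\|_1 \le N^{1-1/p}\|\f\|_p$, whereas you apply H\"older to the pairing with $w$ first; the two orderings give identical bounds). No gaps.
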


\begin{proof}
Using Lemmas~\ref{lem:convolution_using_D_M} and~\ref{lem:truncation_error} followed by Holder's inequality, we have
\begin{align*}
 \left|\left(g\ast f\right)\left(x\right)-\frac{1}{N}\sum^{2M}_{j = 0}f\left(y_{j}\right)g\left(x-y_{j}\right)\right|
 &=\left|\frac{1}{N}\sum^{2M}_{j = 0}f\left(y_{j}\right)\left(g\left(x-y_{j}\right)-\int_{-\pi}^{\pi}g\left(x-u-y_{j}\right)D_{M}\left(u\right)du\right)\right|\\
 & \leq \frac{1}{N}\sum^{2M}_{j = 0}\left|f\left(y_{j}\right)\right|\frac{N^{1-\frac{\beta^{2}}{18}}}{\beta\sqrt{\ln N}} \leq \frac{N^{\frac{-\beta^{2}}{18}} }{\beta\sqrt{\ln N}}\left\Vert \f\right\Vert _{p} N^{1-\frac{1}{p}}.
\end{align*}
\end{proof}

To summarize, Theorem~\ref{thm:convolution_error} tells us that $\left(g\ast f\right)\left(x\right)$ can be approximately computed
in $\mathcal{O}\left(N\right)$-time for any $x\in\left[-\pi,\pi\right]$ using \eqref{equ:UntruncatedFiniteSum}.
This linear runtime cost may be reduced significantly, however, 
if one is willing to accept an additional trade-off between accuracy and the number
of terms needed in the sum \eqref{equ:UntruncatedFiniteSum}. This trade-off is characterized
in the next lemma. 

\begin{lem}
\label{lem:fewer_points_error}
Let $x\in\left[-\pi,\pi\right]$, $p \geq 1$, $\gamma \in \mathbb{R}^+$, and $\kappa := \lceil \gamma \ln N \rceil + 1$.  Set $j' := \arg \min_j \left| x - y_j \right|$.  Using the same values of the other parameters
from Lemma~\ref{lem:truncation_error} above, one has
\[
\left| \frac{1}{N}\sum^{2M}_{j = 0}f\left(y_{j}\right)g\left(x - y_{j}\right) - \frac{1}{N}\sum^{j' + \kappa}_{j = j' - \kappa}f\left(y_{j}\right)g\left(x - y_{j}\right) \right| \leq  2 \| \f \|_p ~N^{-\frac{2 \pi^2 \gamma^2}{\beta^2}}\]
for all $\beta \geq 4$ and $N \geq \beta^2$.
\end{lem}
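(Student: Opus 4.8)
The plan is to first rewrite the left-hand side as $\frac1N\bigl|\sum_{j\in\mathcal{F}}f(y_j)\,g(x-y_j)\bigr|$, where $\mathcal{F}\subseteq\{0,\dots,2M\}$ collects the node indices whose reductions modulo $N$ fall \emph{outside} $\{j'-\kappa,\dots,j'+\kappa\}$. (We may assume $2\kappa+1\le N$, so that these $2\kappa+1$ indices are distinct; otherwise the shorter sum already runs over every node and the difference vanishes.) Since $g\ge0$, Hölder's inequality with exponents $p$ and $p'=p/(p-1)$ gives
\[
\frac1N\Bigl|\sum_{j\in\mathcal{F}}f(y_j)\,g(x-y_j)\Bigr|
\;\le\;\frac{\|\f\|_p}{N}\Bigl(\sum_{j\in\mathcal{F}}g(x-y_j)^{p'}\Bigr)^{1/p'}
\;\le\;\frac{\|\f\|_p}{N}\sum_{j\in\mathcal{F}}g(x-y_j),
\]
the last step being the inequality $\|v\|_{p'}\le\|v\|_1$ (valid for $p'\ge1$) applied to the nonnegative vector $\bigl(g(x-y_j)\bigr)_{j\in\mathcal{F}}$. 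Thus it suffices to prove $\sum_{j\in\mathcal{F}}g(x-y_j)\le 2N\cdot N^{-2\pi^2\gamma^2/\beta^2}$, which I would establish by two complementary estimates.

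For the first estimate, observe that if $y_{j'}$ is the node nearest $x$ then $|x-y_{j'}|\le\pi/N$, so a node lying at circular distance $\tfrac{2\pi k}{N}$ from $y_{j'}$ is at circular distance at least $\tfrac{(2k-1)\pi}{N}$ from $x$; the far nodes are exactly those with $k\ge\kappa+1$, with at most two per value of $k$. Combining this with Lemma~\ref{lem:Decay_of_Periodic_Gaussian} (using that $g$ is even and $2\pi$-periodic and that $c_1^2N^2=\beta^2\ln N$) bounds each far contribution by $\bigl(\tfrac{3}{c_1}+\tfrac{1}{\sqrt{2\pi}}\bigr)\exp\!\bigl(-\tfrac{(2k-1)^2\pi^2}{2\beta^2\ln N}\bigr)$; summing the resulting series over $k\ge\kappa+1$ by comparison with an integral (the summand is decreasing since $\kappa\ge2$, and $\int_b^\infty e^{-v^2}\,dv\le\tfrac1{2b}e^{-b^2}$) yields
\[
\sum_{j\in\mathcal{F}}g(x-y_j)\;\le\;\frac{\bigl(\tfrac{3}{c_1}+\tfrac{1}{\sqrt{2\pi}}\bigr)\,e^{-(2\kappa-1)^2 B_0}}{2\,(2\kappa-1)\,B_0},
\qquad B_0:=\frac{\pi^2}{2\beta^2\ln N}.
\]
Because $\kappa\ge\gamma\ln N+1$ one has $(2\kappa-1)^2B_0\ge\tfrac{2\pi^2\gamma^2\ln N}{\beta^2}$ and $(2\kappa-1)B_0\ge\tfrac{\pi^2\gamma}{\beta^2}$, while $\tfrac{3}{c_1}+\tfrac{1}{\sqrt{2\pi}}<\tfrac{6N}{\beta\sqrt{\ln N}}$ since $N\ge\beta^2\ge16$ forces $\tfrac{3N}{\beta\sqrt{\ln N}}>1$; hence this ``tail estimate'' reads $\sum_{j\in\mathcal{F}}g(x-y_j)<\tfrac{3\beta}{\pi^2\gamma\sqrt{\ln N}}\cdot N\cdot N^{-2\pi^2\gamma^2/\beta^2}$, which is at most $2N\cdot N^{-2\pi^2\gamma^2/\beta^2}$ as soon as $\gamma\sqrt{\ln N}\ge\tfrac{3\beta}{2\pi^2}$.

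For the second estimate — needed when $\gamma\sqrt{\ln N}$ is small, where the tail estimate is wasteful because $g$ has a peak of size $\Theta\!\bigl(N/\sqrt{\ln N}\bigr)$ and a handful of far nodes near that peak already contribute $\Theta(N)$ — I would simply bound $\sum_{j\in\mathcal{F}}g(x-y_j)\le\sum_{j=0}^{2M}g(x-y_j)$ and kill the full sum by Poisson summation: $\sum_{j=0}^{2M}g(x-y_j)=N\sum_{\ell\in\mathbb{Z}}\widehat{g}_{N\ell}\,e^{\i N\ell(x+\pi)}$, whose modulus is at most $N\sum_{\ell}\widehat{g}_{N\ell}=\tfrac{N}{\sqrt{2\pi}}\sum_{\ell}N^{-\beta^2\ell^2/2}<\tfrac{N}{\sqrt{2\pi}}\bigl(1+2^{-30}\bigr)<N/2$ once $\beta\ge4$ and $N\ge16$ (so $N^{-\beta^2/2}$ is negligible). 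This is at most $2N\cdot N^{-2\pi^2\gamma^2/\beta^2}$ whenever $N^{-2\pi^2\gamma^2/\beta^2}\ge1/4$, i.e.\ whenever $\gamma\sqrt{\ln N}\le c\beta$ for a suitable constant $c>\tfrac{3}{2\pi^2}$. Since the two admissible ranges for $\gamma\sqrt{\ln N}$ overlap, every $\gamma\in\mathbb{R}^+$ is covered and the lemma follows. The main obstacle is precisely this tension: no single crude bound on $\sum_{j\in\mathcal{F}}g(x-y_j)$ suffices, because $g$ is simultaneously tall and (on the node scale) wide — one must use genuine Gaussian tail decay when $\kappa$ is comparatively large and genuine cancellation (Poisson summation) when $\kappa$ is comparatively small, and the hypotheses $\beta\ge4$ and $N\ge\beta^2$ are exactly what force those two regimes to overlap.
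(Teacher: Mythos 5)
Your proof is correct, and its second half takes a genuinely different route from the paper's. The opening moves coincide: both reduce the difference to the sum over the ``far'' nodes, bound each $g(x-y_{j'\pm k})$ by $\bigl(\tfrac{3}{c_1}+\tfrac{1}{\sqrt{2\pi}}\bigr)\e^{-(2k-1)^2\pi^2/(2\beta^2\ln N)}$ via Lemma~\ref{lem:Decay_of_Periodic_Gaussian}, and invoke H\"older against $\|\f\|_p$. The divergence is in what is done with the dual factor. The paper keeps the $\ell^q$ norm ($q$ conjugate to $p$) of the vector $\mathbf{h}$ of Gaussian weights and estimates $\|\mathbf{h}\|_q \leq \bigl(\tfrac{\beta^2\ln N}{8\pi}\bigr)^{1/(2q)}N^{-2\pi^2\gamma^2/\beta^2}$ by a Gaussian-tail integral; the resulting prefactor $2\bigl(\tfrac{3}{\beta\sqrt{\ln N}}+\tfrac{1}{N\sqrt{2\pi}}\bigr)\bigl(\tfrac{\beta^2\ln N}{8\pi}\bigr)^{\frac12-\frac1{2p}}$ is then checked to be at most $2$ for all $p$ in one shot, with no case analysis in $\gamma$. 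You instead relax $\|\cdot\|_{p'}$ to $\|\cdot\|_1$, which makes the target $\sum_{j\in\mathcal{F}}g(x-y_j)\leq 2N\cdot N^{-2\pi^2\gamma^2/\beta^2}$ independent of $p$ but is too lossy when $\gamma\sqrt{\ln N}$ is small; you repair this with a second bound on the \emph{full} node sum via Poisson summation (aliasing of $\widehat g$ onto multiples of $N$), and verify that the two admissible ranges $\gamma\sqrt{\ln N}\geq\tfrac{3\beta}{2\pi^2}$ and $\gamma\sqrt{\ln N}\leq\tfrac{\beta\sqrt{\ln 2}}{\pi}$ overlap. I checked the numerics in both regimes and they work out. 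What each approach buys: the paper's is a single uniform computation with no branching; yours is more transparent about the two competing mechanisms (genuine tail decay versus the trivial total mass $\approx N/\sqrt{2\pi}$ of the sampled Gaussian) and makes explicit where the hypotheses $\beta\geq 4$ and $N\geq\beta^2$ are actually used, at the price of an extra tool and a case split.
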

\begin{proof}
Appealing to Lemma~\ref{lem:Decay_of_Periodic_Gaussian} and recalling that $c_{1}=\frac{\beta\sqrt{\ln N}}{N}$ we can see that 
\[
g\left(x\right)\leq\left(\frac{3N}{\beta\sqrt{\ln N}}+\frac{1}{\sqrt{2\pi}}\right)\e^{-\frac{x^{2}N^{2}}{2\beta^{2}\ln N}}.
\]
Using this fact we have that
\begin{align*}
g\left(x - y_{j' \pm k}\right) \leq \left(\frac{3N}{\beta\sqrt{\ln N}}+\frac{1}{\sqrt{2\pi}}\right)\e^{-\frac{\left(x - y_{j' \pm k} \right)^{2}N^{2}}{2\beta^{2}\ln N}} \leq  \left(\frac{3N}{\beta\sqrt{\ln N}}+\frac{1}{\sqrt{2\pi}}\right)\e^{-\frac{\left( 2k-1\right)^{2} \pi^2}{2\beta^{2}\ln N}} 
\end{align*}
for all $k \in \mathbb{Z}_N$.  As a result, one can now bound
\begin{align*}
\left| \frac{1}{N}\sum^{2M}_{j = 0}f\left(y_{j}\right)g\left(x - y_{j}\right) - \frac{1}{N}\sum^{j' + \kappa}_{j = j' - \kappa}f\left(y_{j}\right)g\left(x - y_{j}\right) \right| \end{align*}
above by
\begin{align}
\left(\frac{3}{\beta\sqrt{\ln N}}+\frac{1}{N \sqrt{2\pi}}\right) \sum^{N - 2 \kappa - 1}_{k = \kappa+1}  \left( \left| f\left(y_{j'-k}\right) \right| + \left| f\left(y_{j'+k}\right)   \right| \right) \e^{-\frac{\left( 2k-1\right)^{2} \pi^2}{2\beta^{2}\ln N}}, 
\label{equ:Trunc_Error__Disc_Conv_Sum}
\end{align}
where the $y_j$-indexes are considered modulo $N$ as appropriate.

Our goal is now to employ Holder's inequality on \eqref{equ:Trunc_Error__Disc_Conv_Sum}.  Toward that end, we will now bound the $q$-norm of the vector ${\bf h} := \left \{ \e^{-\frac{\left( \kappa+ \ell - \frac{1}{2}\right)^{2} 2 \pi^2}{\beta^{2}\ln N}} \right \}^{N- 2 \kappa - 1}_{\ell = 1}$.  Letting $a := q \left( \frac{4}{\beta^{2}\ln N} \right)$ we have that
\begin{align*}
\| {\bf h} \|_q^{q} &= \sum^{N- 2 \kappa - 1}_{\ell = 1} \e^{-\frac{\pi^2}{2} \left( \kappa+ \ell - \frac{1}{2}\right)^{2} a} < \sum^{\infty}_{\ell = \kappa} \e^{-\frac{\pi^2}{2} \ell^{2} a} \leq \int^\infty_{\kappa-1}  \e^{-\frac{\pi^2 x^{2}}{2} a}~dx\\
&\leq \sqrt{\frac{1}{2 \pi a}} - \frac{1}{ \pi\sqrt{2 a}} \int^{\pi (\kappa - 1) \sqrt{\frac{a}{2}}}_{-\pi (\kappa - 1) \sqrt{\frac{a}{2}}}  \e^{-u^2}~du \leq \sqrt{\frac{1}{2 \pi a}} \e^{-\frac{a \pi^2}{2}(\kappa - 1)^2} \leq \frac{\beta}{2}\sqrt{\frac{\ln N}{2 \pi q}}  N^{-\frac{2 q \pi^2 \gamma^2}{\beta^2}},
\end{align*}
where we have used  Lemma \ref{lem:gaussian_integral_bound} once again.  As a result we have that
$$\| {\bf h} \|_q \leq \left( \frac{\beta^2\ln N}{8 \pi} \right)^{\frac{1}{2q}} q^{- \frac{1}{2q}} N^{-\frac{2 \pi^2 \gamma^2}{\beta^2}} \leq \left( \frac{\beta^2\ln N}{8 \pi} \right)^{\frac{1}{2q}} N^{-\frac{2 \pi^2 \gamma^2}{\beta^2}} $$
for all $q \geq 1$.  Applying Holder's inequality on \eqref{equ:Trunc_Error__Disc_Conv_Sum} we can now see that \eqref{equ:Trunc_Error__Disc_Conv_Sum} is bounded above by
$$2 \left(\frac{3}{\beta\sqrt{\ln N}}+\frac{1}{N \sqrt{2\pi}}\right) \| \f \|_p \left( \frac{\beta^2\ln N}{8 \pi} \right)^{\frac{1}{2} - \frac{1}{2p}} N^{-\frac{2 \pi^2 \gamma^2}{\beta^2}}.$$
The result now follows.
\end{proof}

We may now finally combine the truncation and estimation errors in Theorem
\ref{thm:convolution_error} and Lemma \ref{lem:fewer_points_error}
above in order to bound the total error one incurs by approximating $\left(g\ast f\right)(x)$ via a truncated portion of \eqref{equ:UntruncatedFiniteSum} for any given $x \in [-\pi, \pi]$.

\begin{thm}
\label{thm:total_convolution_error}
Fix $x\in\left[-\pi,\pi\right]$, $p\geq 1$ (or $p = \infty$), $\frac{N}{36}\geq r \geq 1$, and $g:  [-\pi, \pi] \rightarrow \mathbb{R}^{+}$ to be the $2\pi-$periodic Gaussian \eqref{equ:Def_Periodic_Gaussian} with $c_1 := \frac{6 \sqrt{\ln(N^r)}}{N}$.  Set $j' := \arg \min_j \left| x - y_j \right|$ where $y_{j} = -\pi+\frac{2\pi j}{N}$ for all $j = 0, \dots, 2M$.  Then,
\[
\left| \left(g\ast f\right)(x) - \frac{1}{N}\sum^{j' + \left\lceil \frac{6r}{\sqrt{2} \pi} \ln N \right\rceil + 1}_{j = j' - \left\lceil \frac{6r}{\sqrt{2} \pi} \ln N \right\rceil - 1}f\left(y_{j}\right)g\left(x - y_{j}\right) \right| \leq 3 \frac{\| \f \|_p}{N^r}.
\]
As a consequence, we can see that $\left(g\ast f\right)(x)$ can always to computed to within $\mathcal{O} \left( \| \f \|_{\infty} N^{-r} \right)$-error in just $\mathcal{O}\left( r \log N \right)$-time for any given $\f \in \mathbb{C}^N$ once the $\big\{ g\left(x - y_{j}\right) \big\}^{j' + \left\lceil \frac{6r}{\sqrt{2} \pi} \ln N \right\rceil + 1}_{j = j' - \left\lceil \frac{6r}{\sqrt{2} \pi} \ln N \right\rceil - 1}$ have been precomputed.  
\end{thm}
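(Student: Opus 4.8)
The plan is to decompose the quantity to be bounded into the two error sources already controlled above and combine them with the triangle inequality:
\[
(g\ast f)(x) - \tfrac1N\!\!\!\sum_{j=j'-\kappa}^{\,j'+\kappa}\!\!\! f(y_j)g(x-y_j) = \Big[(g\ast f)(x) - \tfrac1N\!\sum_{j=0}^{2M} f(y_j)g(x-y_j)\Big] + \Big[\tfrac1N\!\sum_{j=0}^{2M} f(y_j)g(x-y_j) - \tfrac1N\!\!\!\sum_{j=j'-\kappa}^{\,j'+\kappa}\!\!\! f(y_j)g(x-y_j)\Big],
\]
where $\kappa := \lceil\frac{6r}{\sqrt2\,\pi}\ln N\rceil+1$. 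The first bracket is bounded by Theorem~\ref{thm:convolution_error} and the second by Lemma~\ref{lem:fewer_points_error}. First I would record that $c_1 = \frac{6\sqrt{\ln(N^r)}}{N} = \frac{(6\sqrt r)\sqrt{\ln N}}{N}$ puts us in the setting of those results with $\beta = 6\sqrt r$; the parameter constraints of Lemma~\ref{lem:truncation_error} are met by taking, say, $\alpha = 1$ and any sufficiently small $\tau\in(0,\frac{1}{\sqrt{2\pi}})$ (e.g.\ $\tau \le \frac{1}{\sqrt{2\pi}}\e^{-72r}$), and the extra hypotheses $\beta\ge4$ and $N\ge\beta^2$ of Lemma~\ref{lem:fewer_points_error} follow from $r\ge1$ and $r\le N/36$ respectively — this last inequality is exactly why the hypothesis $r\le N/36$ is imposed.

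Next I would estimate the first bracket. With $\beta = 6\sqrt r$ we get $\frac{\beta^2}{18} = 2r$, so Theorem~\ref{thm:convolution_error} gives the bound $\frac{\|\f\|_p}{6\sqrt r\,\sqrt{\ln N}}\,N^{1-2r-1/p}$. Since $r\ge1$ forces $1-2r\le -r$, since $1/p\ge0$, and since $6\sqrt r\,\sqrt{\ln N}\ge1$ for every admissible $N$ (note $N\ge36$ here), this is at most $\|\f\|_p\,N^{-r}$; the same conclusion holds for $p=\infty$ reading $1/p$ as $0$.

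Then I would estimate the second bracket. The point is that the window half-width $\kappa$ is chosen to match Lemma~\ref{lem:fewer_points_error} with $\gamma := \frac{6r}{\sqrt2\,\pi}$, and this $\gamma$ is engineered so that, with $\beta = 6\sqrt r$, the exponent appearing there collapses exactly: $\frac{2\pi^2\gamma^2}{\beta^2} = \frac{2\pi^2\cdot(18r^2/\pi^2)}{36r} = r$. Hence Lemma~\ref{lem:fewer_points_error} bounds the second bracket by $2\|\f\|_p\,N^{-r}$. Adding the two bounds by the triangle inequality yields $3\|\f\|_p\,N^{-r}$, which is the asserted inequality. For the concluding runtime remark: the truncated sum has $2\kappa+1 = \mathcal{O}(r\log N)$ terms; the index $j'$ is computed in $\mathcal{O}(1)$ time by rounding $\frac{(x+\pi)N}{2\pi}$; each required $f(y_j)$ is simply the entry $f_{j\bmod N}$ of the input vector $\f$; and taking $p=\infty$ gives accuracy $\mathcal{O}(\|\f\|_\infty N^{-r})$, so the evaluation costs $\mathcal{O}(r\log N)$ arithmetic operations once the weights $g(x-y_j)$ over the window are precomputed.

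I do not expect a genuine obstacle here — the argument is essentially bookkeeping on top of Theorem~\ref{thm:convolution_error} and Lemma~\ref{lem:fewer_points_error}. The only thing requiring care is the constant-matching: verifying that $\beta = 6\sqrt r$ sends $\beta^2/18$ to $2r$ and that $\gamma = \frac{6r}{\sqrt2\,\pi}$ sends $\frac{2\pi^2\gamma^2}{\beta^2}$ to exactly $r$ (so that both exponents are at most $-r$), together with checking the prerequisite inequalities $\beta\ge4$ and $N\ge\beta^2$, the latter being precisely the restriction $r\le N/36$.
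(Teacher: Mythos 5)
Your proposal is correct and follows essentially the same route as the paper: a triangle-inequality split into the full-sum discretization error (Theorem~\ref{thm:convolution_error}) and the window-truncation error (Lemma~\ref{lem:fewer_points_error}), instantiated with $\beta=6\sqrt r$ and $\gamma=\frac{6r}{\sqrt2\,\pi}$ so that both exponents reduce to $-r$. In fact you supply slightly more detail than the paper does (the explicit check that admissible $\tau,\alpha$ exist for Lemma~\ref{lem:truncation_error}, and the verification that each of the two terms is at most $\|\f\|_p N^{-r}$ and $2\|\f\|_p N^{-r}$ respectively), all of which is accurate.
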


\begin{proof}
Combining Theorem \ref{thm:convolution_error} and Lemma \ref{lem:fewer_points_error} we can see that 
\[
\left| \left(g\ast f\right)(x) - \frac{1}{N}\sum^{j' + \left\lceil \frac{6r}{\sqrt{2} \pi} \ln N \right\rceil + 1}_{j = j' - \left\lceil \frac{6r}{\sqrt{2} \pi} \ln N \right\rceil - 1}f\left(y_{j}\right)g\left(x - y_{j}\right) \right| \leq \| \f \|_p \left( \frac{1}{\beta\sqrt{\ln N}}N^{1-\frac{\beta^{2}}{18} - \frac{1}{p}} + 2 ~N^{-\frac{2 \pi^2 \gamma^2}{\beta^2}} \right)
\]
where $\beta = 6 \sqrt{r} \geq 6$, $N \geq 36 r = \beta^2$, and $\gamma = \frac{6r}{\sqrt{2} \pi} = \frac{\beta \sqrt{r}}{\sqrt{2} \pi}$.  
\end{proof}

We are now prepared to bound the error of the proposed approach when utilizing the SFTs developed in \cite{Iw-arxiv}.

\section{An Error Guarantee for Algorithm~\ref{Alg1} when Using the SFTs Proposed in \cite{Iw-arxiv}}
\label{sec:ErrorBound}
Given the $2\pi-$periodic Gaussian $g:  [-\pi, \pi] \rightarrow \mathbb{R}^{+}$ \eqref{equ:Def_Periodic_Gaussian}, consider the periodic modulation of $g$, $\tilde{g}_{q}:\left[-\pi,\pi\right]\to\mathbb{C}$, for any $q\in\mathbb{Z}$ defined by
\[
\tilde{g}_{q}\left(x\right)= \e ^{-\i qx}g\left(x\right).
\]
One can see that
\begin{align*}
\tilde{g}_{q}\left(x\right) & =\e ^{-\i qx}\sum_{\omega=-\infty}^{\infty}\widehat{g}_{\omega} \e ^{\i \omega x}=\sum_{\omega=-\infty}^{\infty}\widehat{g}_{\omega} \e ^{ \i \left(\omega-q\right)x}=\sum_{\tilde{\omega}=-\infty}^{\infty}\widehat{g}_{\tilde{\omega}+q} \e ^{ \i \tilde{\omega}x},
\end{align*}
so that the Fourier series coefficients of $\tilde{g}_{q}$ are those of $g$, shifted by $q$; that is,
\[
\left(\widehat{\tilde{g}_{q}}\right)_{\omega}=\widehat{g}_{\omega+q}.
\]

In line 9 of Algorithm~\ref{Alg1}, we provide the SFT Algorithm in \cite{Iw-arxiv} with the approximate evaluations of $\left\{ \left(\tilde{g}_{q}\ast f\right)\left(x_{k}\right)\right\} _{k=1}^{m},$ namely, $\left\{ \left(\tilde{g}_{q}\ast f\right)\left(x_{k}\right)+n_{k}\right\} _{k=1}^{m}$, where, by Theorem \ref{thm:total_convolution_error}, the perturbations $n_{k}$ are bounded, for instance, by
\[
\left|n_{k}\right|\leq3\frac{\left\Vert f\right\Vert _{\infty}}{N^{r}}\ \text{\ensuremath{\forall}\ }k=1,\dots,m.
\]

With this in mind, let us apply Lemma \ref{lem:SFT_Ident_Coef_Est_in_Noise} to the function $\tilde{g}_{q}\ast f$. We have the following lemma.

\begin{lem}
Let $s\in [2,N]\cap\mathbb{N}$, and $\mathbf{n}\in\mathbb{C}^{m}$ be the vector containing the total errors incurred by approximating $\tilde{g}_{q}\ast f$ via a truncated version of \eqref{equ:UntruncatedFiniteSum}, as per Theorem \ref{thm:total_convolution_error}. There exists a set of $m$ points $\left\{ x_{k}\right\} _{k=1}^{m}\subset\left[-\pi,\pi\right]$ such that Algorithm 3 on page 72 of \cite{Iw-arxiv}, when given access to the corrupted samples $\left\{ \left(\tilde{g}_{q}\ast f\right)\left(x_{k}\right)+n_{k}\right\} _{k=1}^{m},$ will identify a subset $S\subseteq B$ which is guaranteed to contain all $\omega\in B$ with
\[
\left|\left(\widehat{\tilde{g}_{q}\ast f}\right)_{\omega}\right|>4\left(\frac{1}{s}\cdot\left\Vert \widehat{\tilde{g}_{q}\ast f}-\left(\widehat{\tilde{g}_{q}\ast f}\right)_{s}^{\rm opt}\right\Vert _{1}+3\left\Vert \f\right\Vert _{\infty}N^{-r}\right)=:4\tilde{\delta}.
\]
Furthermore, every $\omega\in S$ returned by Algorithm 3 will also have an associated Fourier series coefficient estimate $z_{\omega}\in\mathbb{C}$ which is guaranteed to have
\[
\left|\left(\widehat{\tilde{g}_{q}\ast f}\right)_{\omega}-z_{\omega}\right|\leq\sqrt{2}\tilde{\delta}.
\]
\end{lem}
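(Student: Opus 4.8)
The plan is to obtain the statement by specializing Lemma~\ref{lem:SFT_Ident_Coef_Est_in_Noise} to the single $2\pi$-periodic function $h := \tilde{g}_{q}\ast f$, with $\left\{(\tilde{g}_{q}\ast f)(x_{k}) + n_{k}\right\}_{k=1}^{m}$ in the role of the corrupted samples and $\n$ in the role of the noise vector. Two structural observations about $h$ make this clean. First, by the convolution rule together with the identity $\left(\widehat{\tilde{g}_{q}}\right)_{\omega} = \widehat{g}_{\omega+q}$ established just above, $\widehat{h}_{\omega} = \widehat{g}_{\omega+q}\,\widehat{f}_{\omega}$ for every $\omega \in \mathbb{Z}$; since $\widehat{f}_{\omega} = 0$ for all $\omega \notin B$, the same holds for $\widehat{h}$. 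Hence $\widehat{h}$ is finitely supported --- in particular $\widehat{h} \in \ell^{1} \cap \ell^{2}$ --- and the out-of-band term appearing in \eqref{equ:GSFT_Identify}--\eqref{equ:GSFT_Estimate} vanishes: $\left\|\widehat{h} - \widehat{h}\vert_{B}\right\|_{1} = 0$. This is precisely why the threshold $\tilde{\delta}$ has only the two displayed terms, with no $\left\|\widehat{f} - \widehat{f}\vert_{B}\right\|_{1}$ contribution. Second, by hypothesis $\n$ collects the truncation errors incurred by approximating $\tilde{g}_{q}\ast f$ via a truncated version of \eqref{equ:UntruncatedFiniteSum}, so Theorem~\ref{thm:total_convolution_error} (with $p = \infty$, and the parameters $c_{1} = 6\sqrt{\ln(N^{r})}/N$, i.e.\ $\beta = 6\sqrt{r}$) gives $\|\n\|_{\infty} \leq 3\|\f\|_{\infty} N^{-r}$.

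Now apply Lemma~\ref{lem:SFT_Ident_Coef_Est_in_Noise} to $h$ with $\epsilon = 1$ (so that $s/\epsilon = s \geq 2$, using $s \in [2,N]\cap\mathbb{N}$); if one wishes to remain literally inside the hypothesis $\epsilon^{-1} \in \mathbb{N} \setminus \{1\}$ of that lemma, take instead $\epsilon = 1/2$ and use that $\tfrac{1}{2}\left\|\widehat{h} - \left(\widehat{h}\right)^{\rm opt}_{2s}\right\|_{1} \leq \tfrac{1}{2}\left\|\widehat{h} - \left(\widehat{h}\right)^{\rm opt}_{s}\right\|_{1} \leq \left\|\widehat{h} - \left(\widehat{h}\right)^{\rm opt}_{s}\right\|_{1}$, which only weakens the bound. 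In either case the quantity controlling both \eqref{equ:GSFT_Identify} and \eqref{equ:GSFT_Estimate} satisfies
\[
\frac{\epsilon\left\|\widehat{h} - \left(\widehat{h}\right)^{\rm opt}_{(s/\epsilon)}\right\|_{1}}{s} + \left\|\widehat{h} - \widehat{h}\vert_{B}\right\|_{1} + \|\n\|_{\infty} \;\leq\; \frac{1}{s}\left\|\widehat{h} - \left(\widehat{h}\right)^{\rm opt}_{s}\right\|_{1} + 0 + 3\|\f\|_{\infty} N^{-r} \;=\; \tilde{\delta}.
\]
Because \eqref{equ:GSFT_Identify} guarantees that the returned set $S$ contains every $\omega \in B$ whose coefficient exceeds $4$ times the left-hand side, and that left-hand side is at most $\tilde{\delta}$, monotonicity in the threshold yields that $S$ contains every $\omega \in B$ with $\left|\widehat{h}_{\omega}\right| > 4\tilde{\delta}$; and \eqref{equ:GSFT_Estimate} directly gives, for each $\omega \in S$, an estimate $z_{\omega}$ with $\left|\widehat{h}_{\omega} - z_{\omega}\right| \leq \sqrt{2}\cdot(\text{left-hand side}) \leq \sqrt{2}\,\tilde{\delta}$. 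Rewriting $\widehat{h}_{\omega} = \left(\widehat{\tilde{g}_{q}\ast f}\right)_{\omega}$ is the stated conclusion, and the set of $m$ points is simply the one furnished by Lemma~\ref{lem:SFT_Ident_Coef_Est_in_Noise} for $h$.

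There is no real obstacle here: the lemma is a specialization of Lemma~\ref{lem:SFT_Ident_Coef_Est_in_Noise} combined with the already-proven sampling bound of Theorem~\ref{thm:total_convolution_error}. The only two points that need a moment's care are (i) checking that convolving with $\tilde{g}_{q}$ preserves the frequency support in $B$ (so the aliasing term drops out and $\tilde{\delta}$ takes its clean two-term form), and (ii) the trivial parameter-matching with the $\epsilon^{-1}\in\mathbb{N}\setminus\{1\}$ convention, handled by the $\epsilon = 1/2$ substitution above. Everything genuinely substantive --- the noise-robust identification and estimation guarantees of Algorithm~3 from \cite{Iw-arxiv}, and the $\mathcal{O}\left(\|\f\|_{\infty} N^{-r}\right)$ error bound for the truncated convolution samples --- is already in place.
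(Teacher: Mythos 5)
Your proposal is correct and follows essentially the same route as the paper, which presents this lemma as a direct specialization of Lemma~\ref{lem:SFT_Ident_Coef_Est_in_Noise} to $\tilde{g}_{q}\ast f$ (whose Fourier support lies in $B$, killing the aliasing term) combined with the sample-error bound $\|\n\|_{\infty}\leq 3\|\f\|_{\infty}N^{-r}$ from Theorem~\ref{thm:total_convolution_error}. In fact you are slightly more careful than the paper in reconciling the stated threshold with the hypothesis $\epsilon^{-1}\in\mathbb{N}\setminus\{1\}$ via the $\epsilon=1/2$ substitution, which the paper glosses over.
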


Next, we need to guarantee that the estimates of $\widehat{\tilde{g}_{q}\ast f}$ returned by Algorithm 3 of \cite{Iw-arxiv} will yield good estimates of $\widehat{f}$ itself. We have the following.

\begin{lem}
Let $s\in [2,N]\cap\mathbb{N}$. Given a $2\pi-$periodic function $f:\left[-\pi,\pi\right]\rightarrow\mathbb{C}$, the periodic Gaussian $g$, and any of its modulations $\tilde{g}_{q}\left(x\right)=\e^{-\i qx}g\left(x\right)$, one has
\[
\left\Vert \widehat{\tilde{g}_{q}\ast f}-\left(\widehat{\tilde{g}_{q}\ast f}\right)_{s}^{\rm opt}\right\Vert _{1}\leq \frac{1}{2} \left\Vert \widehat{f}-\widehat{f}_{s}^{\rm opt}\right\Vert _{1}.
\]
\end{lem}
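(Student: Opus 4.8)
The plan is to exploit the fact that the Fourier coefficients of $\tilde{g}_q \ast f$ are obtained from those of $f$ by pointwise multiplication with the (shifted) Gaussian coefficients $\widehat{g}_{\omega+q}$, which all lie in $(0, 1/\sqrt{2\pi}\,]$. Writing $w_\omega := \bigl(\widehat{\tilde g_q}\bigr)_\omega = \widehat g_{\omega+q}$, we have $\bigl(\widehat{\tilde g_q \ast f}\bigr)_\omega = w_\omega \widehat f_\omega$ for every $\omega \in \mathbb{Z}$, with $0 < w_\omega \le 1/\sqrt{2\pi}$. The key structural observation is that multiplication by a fixed nonnegative sequence can only shrink the tail of the best $s$-term approximation error measured in $\ell^1$: if one multiplies every coordinate of a sequence by a weight in $[0, c]$, then the $\ell^1$-norm of the best $s$-term tail is at most $c$ times the original best $s$-term tail.

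First I would record the general inequality: for any sequence $\widehat h \in \ell^1$ and any weights $0 \le w_\omega \le c$,
\[
\left\| \widehat{w \circ h} - \left(\widehat{w \circ h}\right)_s^{\rm opt} \right\|_1 \le c \left\| \widehat h - \widehat h_s^{\rm opt} \right\|_1.
\]
To see this, let $T$ be the support of $h_s^{\rm opt}$, i.e. an index set of $s$ largest-magnitude coordinates of $\widehat h$. Then $\left(\widehat{w \circ h}\right)_s^{\rm opt}$ is a \emph{best} $s$-term approximation of $w \circ h$, so in particular its tail $\ell^1$-error is no larger than that of the (generally suboptimal) choice that keeps the coordinates in $T$. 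Hence
\[
\left\| \widehat{w \circ h} - \left(\widehat{w \circ h}\right)_s^{\rm opt} \right\|_1 \le \sum_{\omega \notin T} w_\omega |\widehat h_\omega| \le c \sum_{\omega \notin T} |\widehat h_\omega| = c \left\| \widehat h - \widehat h_s^{\rm opt} \right\|_1.
\]

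Then I would apply this with $h = f$, $w_\omega = \widehat g_{\omega + q}$, and $c = 1/\sqrt{2\pi}$, using Lemma~\ref{lem:Periodic_Gaussian_FC} which gives exactly $\|\widehat g\|_\infty = 1/\sqrt{2\pi} < 1/2$. This yields
\[
\left\| \widehat{\tilde g_q \ast f} - \left(\widehat{\tilde g_q \ast f}\right)_s^{\rm opt} \right\|_1 \le \frac{1}{\sqrt{2\pi}} \left\| \widehat f - \widehat f_s^{\rm opt} \right\|_1 \le \frac{1}{2} \left\| \widehat f - \widehat f_s^{\rm opt} \right\|_1,
\]
which is the claimed bound. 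I do not anticipate a genuine obstacle here; the only point requiring a little care is making sure the suboptimal-comparison step is phrased correctly — namely that one is comparing the optimal $s$-term tail of $w \circ h$ against the specific $s$-term approximant supported on $T$ (the optimal support for $h$, not for $w \circ h$), and that this comparison goes in the right direction because $\left(\widehat{w\circ h}\right)_s^{\rm opt}$ is by definition the minimizer. Everything else is the elementary monotonicity $0 \le w_\omega \le c$ applied termwise to a sum of nonnegative quantities, together with the numerical fact $1/\sqrt{2\pi} < 1/2$.
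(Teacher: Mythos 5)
Your proposal is correct and is essentially the paper's own argument: the paper likewise compares the optimal $s$-term tail of $\widehat{\tilde g_q\ast f}$ against the suboptimal tail obtained by keeping the support $R_s^{\rm opt}(\widehat f)$, and then uses $\|\widehat g\|_\infty = 1/\sqrt{2\pi} < 1/2$ termwise (the paper just writes the same chain of inequalities in the reverse direction, starting from $\tfrac12\|\widehat f - \widehat f_s^{\rm opt}\|_1$). No gaps.
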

\begin{proof}

Recall the definition of $R_{s}^{\rm opt}\left(\widehat{f}\right)$ as the subset of $B$ containing the $s$ most energetic frequencies of $\widehat{f}$, and observe that
\[
\frac{1}{2} \left\Vert \widehat{f}-\widehat{f}_{s}^{\rm opt}\right\Vert _{1}= \frac{1}{2} \sum_{\omega\in B\backslash R_{s}^{\rm opt}\left(\widehat{f}\right)}\left|\widehat{f}_{\omega}\right|\geq\sum_{\omega\in B\backslash R_{s}^{\rm opt}\left(\widehat{f}\right)}\left|\left(\widehat{\tilde{g}_{q}}\right)_{\omega}\cdot\widehat{f}_{\omega}\right|
\]
since, by Lemma \ref{lem:Periodic_Gaussian_FC}, $\widehat{g}_{\omega}<\frac{1}{2}$ for all $\omega$, and consequently, $\left(\widehat{\tilde{g}_{q}}\right)_{\omega}=\widehat{g}_{\omega+q}<\frac{1}{2}$ for all $\omega$. Moreover,
\begin{align*}
\sum_{\omega\in B\backslash R_{s}^{\rm opt}\left(\widehat{f}\right)}\left|\left(\widehat{\tilde{g}_{q}}\right)_{\omega}\cdot\widehat{f}_{\omega}\right| & \geq\sum_{\omega\in B\backslash R_{s}^{\rm opt}\left(\widehat{\tilde{g}_{q}\ast f}\right)}\left|\left(\widehat{\tilde{g}_{q}}\right)_{\omega}\cdot\widehat{f}_{\omega}\right|
  ~=~\left\Vert \widehat{\tilde{g}_{q}\ast f}-\left(\widehat{\tilde{g}_{q}\ast f}\right)_{s}^{\rm opt}\right\Vert _{1}.
\end{align*}
\end{proof}

Let us combine the guarantees above into the following lemma.

\begin{lem}
Let $s\in [2,N]\cap\mathbb{N}$, and $\mathbf{n}\in\mathbb{C}^{m}$ be the vector containing the total errors incurred by approximating $\tilde{g}_{q}\ast f$ via a truncated version of \eqref{equ:UntruncatedFiniteSum}, as per Theorem \ref{thm:total_convolution_error}. There exists a set of $m$ points $\left\{ x_{k}\right\} _{k=1}^{m}\subset\left[-\pi,\pi\right]$ such that Algorithm 3 on page 72 of \cite{Iw-arxiv}, when given access to the corrupted samples $\left\{ \left(\tilde{g}_{q}\ast f\right)\left(x_{k}\right)+n_{k}\right\} _{k=1}^{m},$ will identify a subset $S\subseteq B$ which is guaranteed to contain all $\omega\in B$ with
\[
\left|\left(\widehat{\tilde{g}_{q}\ast f}\right)_{\omega}\right|>4\left(\frac{1}{2s}\cdot\left\Vert \widehat{f}-\widehat{f}_{s}^{\rm opt}\right\Vert _{1}+3\left\Vert \f\right\Vert _{\infty}N^{-r}\right)=:4\delta.
\]
Furthermore, every $\omega\in S$ returned by Algorithm 3 will also have an associated Fourier series coefficient estimate $z_{\omega}\in\mathbb{C}$ which is guaranteed to have
\[
\left|\left(\widehat{\tilde{g}_{q}}\right)_{\omega}\cdot\widehat{f}_{\omega}-z_{\omega}\right|\leq\sqrt{2}\delta.
\]
\end{lem}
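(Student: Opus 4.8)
The plan is simply to chain the two preceding lemmas together; the statement is a repackaging of them and needs no genuinely new estimate. First I would invoke the first of the two preceding lemmas, which already delivers a set $S\subseteq B$ together with coefficient estimates $z_{\omega}$ enjoying exactly the two asserted properties, except with
\[
\tilde{\delta}\;:=\;\frac{1}{s}\Bigl\|\widehat{\tilde{g}_{q}\ast f}-\bigl(\widehat{\tilde{g}_{q}\ast f}\bigr)_{s}^{\rm opt}\Bigr\|_{1}+3\|\f\|_{\infty}N^{-r}
\]
in place of $\delta$. (That lemma carries no ``tail outside $B$'' contribution because $f$ is a degree-$N$ trigonometric polynomial, so $\widehat{f}_{\omega}=0$ for $\omega\notin B$, and hence $\bigl(\widehat{\tilde{g}_{q}\ast f}\bigr)_{\omega}=\bigl(\widehat{\tilde{g}_{q}}\bigr)_{\omega}\widehat{f}_{\omega}=0$ for $\omega\notin B$ as well; likewise the noise vector $\n$ is the truncation error of Theorem~\ref{thm:total_convolution_error}, so $\|\n\|_{\infty}\le 3\|\f\|_{\infty}N^{-r}$.)

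Next I would feed the $\ell^{1}$ best-$s$-term error occurring in $\tilde{\delta}$ into the second preceding lemma, which bounds it by $\tfrac12\|\widehat{f}-\widehat{f}_{s}^{\rm opt}\|_{1}$; this yields
\[
\tilde{\delta}\;\le\;\frac{1}{2s}\bigl\|\widehat{f}-\widehat{f}_{s}^{\rm opt}\bigr\|_{1}+3\|\f\|_{\infty}N^{-r}\;=\;\delta .
\]
Because the identification threshold $4(\cdot)$ is monotone in its argument, every $\omega\in B$ with $\bigl|(\widehat{\tilde{g}_{q}\ast f})_{\omega}\bigr|>4\delta$ also satisfies $\bigl|(\widehat{\tilde{g}_{q}\ast f})_{\omega}\bigr|>4\tilde{\delta}$ and is therefore in $S$; similarly each $\omega\in S$ obeys $\bigl|(\widehat{\tilde{g}_{q}\ast f})_{\omega}-z_{\omega}\bigr|\le\sqrt{2}\,\tilde{\delta}\le\sqrt{2}\,\delta$. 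Finally, substituting the convolution identity $(\widehat{\tilde{g}_{q}\ast f})_{\omega}=(\widehat{\tilde{g}_{q}})_{\omega}\,\widehat{f}_{\omega}$ rewrites the last inequality in the claimed form $\bigl|(\widehat{\tilde{g}_{q}})_{\omega}\,\widehat{f}_{\omega}-z_{\omega}\bigr|\le\sqrt{2}\,\delta$, completing the argument.

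There is no real obstacle here: the lemma is a pure bookkeeping combination of its two predecessors. The only points that warrant a moment's care are (i) noting, as above, that convolving the band-limited $f$ against a modulated periodic Gaussian keeps all Fourier mass inside $B$, so no tail term ever enters, and (ii) the trivial monotonicity observation that inflating $\tilde{\delta}$ to $\delta$ can only weaken both the identification and the coefficient-estimate guarantees, so the substitution costs nothing.
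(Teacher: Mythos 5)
Your proof is correct and is exactly the argument the paper intends: the paper states this lemma with no written proof beyond "Let us combine the guarantees above," i.e., it substitutes the bound $\bigl\|\widehat{\tilde{g}_{q}\ast f}-(\widehat{\tilde{g}_{q}\ast f})_{s}^{\rm opt}\bigr\|_{1}\le\tfrac12\|\widehat{f}-\widehat{f}_{s}^{\rm opt}\|_{1}$ into $\tilde{\delta}$ and uses monotonicity, just as you do. Your parenthetical remarks about the vanishing out-of-band tail and the noise bound $\|\n\|_\infty\le 3\|\f\|_\infty N^{-r}$ are also consistent with how the paper sets up the preceding lemma.
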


The lemma above implies that for any choice of $q$ in line 4 of Algorithm~\ref{Alg1}, we are guaranteed to find all $\omega\in\left[q-\left\lceil \frac{N}{\alpha\sqrt{\ln N}}\right\rceil ,q+\left\lceil \frac{N}{\alpha\sqrt{\ln N}}\right\rceil \right)\cap B$ with
\[
\left|\widehat{f}_{\omega}\right|>\max_{\tilde{\omega}}\frac{4\delta}{\left(\widehat{\tilde{g}_{q}}\right)_{\tilde{\omega}}}\geq\frac{4\delta}{\tau}
\]
where $\alpha$ and $\tau$ are as defined in Lemma \ref{lem:Periodic_Gaussian_Decay}. Moreover, the Fourier series coefficient estimates $z_{\omega}$ returned by Algorithm 3 will satisfy
\[
\left|\widehat{f}_{\omega}-\frac{z_{\omega}}{\left(\widehat{\tilde{g}_{q}}\right)_{\omega}}\right|\leq\max_{\tilde{\omega}}\frac{\sqrt{2}\delta}{\left(\widehat{\tilde{g}_{q}}\right)_{\tilde{\omega}}}\leq\frac{\sqrt{2}\delta}{\tau}.
\]

Following Theorem 3, which guarantees a decay of $N^{-r}$ in the total approximation error, let us set $\beta=6\sqrt{r}$ for $1\leq r\leq\frac{N}{36}$. Recall from Lemma \ref{lem:Periodic_Gaussian_Decay} the choice of $\beta$ $\in\left(0,\alpha\sqrt{\frac{\ln\left(1/\tau\sqrt{2\pi}\right)}{2}}\right]$ where $\tau$ is to be chosen from $\left(0,\frac{1}{\sqrt{2\pi}}\right)$. Thus, we must choose $\alpha\in\left[1,\frac{N}{\sqrt{\ln N}}\right]$ so that
\[
6\sqrt{r}\leq\alpha\sqrt{\frac{\ln\left(1/\tau\sqrt{2\pi}\right)}{2}}\iff\alpha\geq\frac{6\sqrt{2r}}{\ln\left(1/\tau\sqrt{2\pi}\right)}.
\]
We may remove the dependence on $\tau$ simply by setting, e.g., $\tau=\frac{1}{3}$. Then $\alpha=\mathcal{O}\left(\sqrt{r}\right)$.

We are now ready to state the recovery guarantee of Algorithm ~\ref{Alg1} and its operation count.

\begin{thm}
Let $N\in\mathbb{N}$, $s\in [2,N]\cap\mathbb{N}$, and $1\leq r \leq \frac{N}{36}$ as in Theorem \ref{thm:total_convolution_error}. If Algorithm 3 of \cite{Iw-arxiv} is used in Algorithm~\ref{Alg1} then Algorithm~\ref{Alg1} will always deterministically identify a subset $S\subseteq B$ and a sparse vector $\mathbf{v}\vert_{S}\in\mathbb{C}^{N}$ satisfying
\begin{equation}
\left\Vert \hf-\mathbf{v}\vert_{S}\right\Vert _{2}\leq\left\Vert \hf-\hf_{s}^{\rm opt}\right\Vert _{2}+\frac{33}{\sqrt{s}}\cdot\left\Vert \hf-\hf_{s}^{\rm opt}\right\Vert _{1}+198\sqrt{s}\left\Vert \f\right\Vert_{\infty}N^{-r}.\label{equ:upper_bound}
\end{equation}
Algorithm ~\ref{Alg1}'s operation count is then
$$ \mathcal{O} \left( \frac{ s^2\cdot r^{\frac32} \cdot \log ^{\frac{11}{2}} (N)} {\log (s)} \right).$$

If returning a sparse vector $\mathbf{v}\vert_{S}\in\mathbb{C}^{N}$ that satisfies \eqref{equ:upper_bound} with probability at least $(1-p) \in [2/3,1)$ is sufficient, a Monte Carlo variant of the deterministic Algorithm 3 in \cite{Iw-arxiv} may be used in line 9 of Algorithm ~\ref{Alg1}. In this case Algorithm ~\ref{Alg1}'s operation count is
$$ \mathcal{O} \left(  s\cdot r^{\frac32} \cdot \log ^\frac{9}{2}(N)\cdot \log \left( \frac{N}{p}\right) \right).$$
\label{thm:MainTHM}
\end{thm}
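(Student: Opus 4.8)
The plan is to glue the per‑modulation guarantees of Section~\ref{sec:ErrorBound} into a single statement about $\hf$ on all of $B$, and then feed that statement into the same ``identify the heavy coefficients, estimate them well $\Rightarrow$ best $s$-term $\ell_2$ error'' argument that already underlies Theorem~\ref{thm:StableRecov}. Throughout, $\tau=\tfrac13$, $\beta=6\sqrt r$, and $\alpha=\mathcal O(\sqrt r)$ are as fixed in the discussion preceding the theorem.

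\emph{Tiling and aggregation.} First I would check that, as $j$ runs over $1,\dots,\lceil c_2\rceil$ with $c_2=\tfrac{\alpha\sqrt{\ln N}}{2}$, the bands $\bigl[q_j-K,\,q_j+K\bigr)\cap B$ tested inside Algorithm~\ref{Alg1} --- with $K:=\bigl\lceil N/(\alpha\sqrt{\ln N})\bigr\rceil$ and $q_j=-\lceil N/2\rceil+1+(2j-1)K$ --- partition $B$: consecutive bands abut (the $q_j$ are in arithmetic progression with common difference $2K$), the first band begins at the least element $-\lceil N/2\rceil+1$ of $B$, and $2K\lceil c_2\rceil\ge 2\cdot\tfrac{N}{\alpha\sqrt{\ln N}}\cdot\tfrac{\alpha\sqrt{\ln N}}{2}=N$ forces the last band to reach $\lfloor N/2\rfloor$. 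Hence every $\omega\in B$ lies in exactly one band, the frequencies accumulated in $\widehat R$ are distinct, and $|\widehat R|\le|B|=N$. Next, fixing one band (a value $q=q_j$) and applying the three displayed lemmas of Section~\ref{sec:ErrorBound}, using $(\tilde g_q\ast f)(x)=\e^{-\i q x}\,(g\ast h)(x)$ with $h(x):=\e^{\i q x}f(x)$: Theorem~\ref{thm:total_convolution_error} (applied to $g\ast h$ with $p=\infty$, the grid samples of $h$ having the same moduli as those of $\f$) bounds the evaluation error at each $x_k$ by $3\|\f\|_\infty N^{-r}$; since $f$ has degree $N$ there is no Fourier tail outside $B$; and the lemma bounding $\bigl\|\widehat{\tilde g_q\ast f}-(\widehat{\tilde g_q\ast f})_s^{\rm opt}\bigr\|_1$ replaces that quantity by $\tfrac12\|\hf-\hf_s^{\rm opt}\|_1$. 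Using moreover that $(\widehat{\tilde g_q})_\omega\ge\tau$ throughout the band (Lemma~\ref{lem:Periodic_Gaussian_Decay}), which both legitimizes dividing the recovered $\tilde g_q\ast f$ coefficients by $(\widehat{\tilde g_q})_\omega$ and inflates their error by only $1/\tau$, this band contributes to $\widehat R$ an estimate $\tilde c_\omega$ with $|\hf_\omega-\tilde c_\omega|\le\sqrt2\,\delta/\tau=3\sqrt2\,\delta$ for each $\omega$ it adds, and it adds every band-$\omega$ with $|\hf_\omega|>4\delta/\tau=12\,\delta$, where $\delta:=\tfrac{1}{2s}\|\hf-\hf_s^{\rm opt}\|_1+3\|\f\|_\infty N^{-r}$. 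Since the bands partition $B$, these merge into one statement: $\widehat R$ contains every $\omega\in B$ with $|\hf_\omega|>12\,\delta$, and every $(\omega,\tilde c_\omega)\in\widehat R$ has $|\hf_\omega-\tilde c_\omega|\le 3\sqrt2\,\delta$.

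\emph{From the candidate list to the $\ell_2$ bound.} Writing $D:=3\,\delta$, the previous paragraph gives exactly ``$\widehat R$ identifies every $\omega$ with $|\hf_\omega|>4D$ and estimates every retained $\omega$ to within $\sqrt2\,D$'' --- precisely the input of the best $s$-term argument that derives Theorem~\ref{thm:StableRecov} from Lemma~\ref{lem:SFT_Ident_Coef_Est_in_Noise} (outlined in Appendix~\ref{sec:proof_lem_3_thm_2}). Running that argument on $\widehat R$, keeping the $s$ frequencies with largest $|\tilde c_\omega|$ produces an $s$-sparse $\mathbf v|_S$ with $\|\hf-\mathbf v|_S\|_2\le\|\hf-\hf_s^{\rm opt}\|_2+22\sqrt s\,D$; substituting $D=3\delta=\tfrac{3}{2s}\|\hf-\hf_s^{\rm opt}\|_1+9\|\f\|_\infty N^{-r}$ rewrites $22\sqrt s\,D$ as $\tfrac{33}{\sqrt s}\|\hf-\hf_s^{\rm opt}\|_1+198\sqrt s\,\|\f\|_\infty N^{-r}$, which is \eqref{equ:upper_bound}. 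For the randomized variant one runs the Monte Carlo form of Algorithm~3 in each band with failure probability $p/\lceil c_2\rceil$ and takes a union bound over the $\lceil c_2\rceil=\mathcal O(\sqrt{r\log N})$ bands; since $r\le N/36$, $\log\bigl(N\lceil c_2\rceil/p\bigr)=\mathcal O(\log(N/p))$, so \eqref{equ:upper_bound} is unaffected.

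\emph{Operation count, and where the work is.} There are $\lceil c_2\rceil=\mathcal O\bigl(\sqrt r\,\sqrt{\log N}\bigr)$ outer iterations. Each computes $m=\mathcal O\bigl(s^2\log^4 N/\log s\bigr)$ approximate samples of $\tilde g_q\ast f$, at $\mathcal O(r\log N)$ apiece by Theorem~\ref{thm:total_convolution_error} (the weights $g(x_k-y_j)$ are $q$-independent, so may be precomputed once at the same asymptotic cost), runs the deterministic Algorithm~3 of \cite{Iw-arxiv} in $\mathcal O\bigl(s^2\log^4 N/\log s\bigr)$ time, and spends $\mathcal O(s)$ on bookkeeping; the per-iteration cost is therefore $\mathcal O\bigl(s^2 r\log^5 N/\log s\bigr)$, and the closing selection of the $s$ largest candidates out of $|\widehat R|\le N$ is lower order. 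Multiplying yields $\mathcal O\bigl(s^2 r^{3/2}\log^{11/2} N/\log s\bigr)$. In the Monte Carlo case each iteration instead draws $\widetilde m=\mathcal O\bigl(s\log^3 N\,\log(N/p)\bigr)$ samples (still $\mathcal O(r\log N)$ each) and runs the randomized Algorithm~3 in $\mathcal O\bigl(s\log^3 N\,\log(N/p)\bigr)$ time, for $\mathcal O\bigl(sr\log^4 N\,\log(N/p)\bigr)$ per iteration and $\mathcal O\bigl(sr^{3/2}\log^{9/2} N\,\log(N/p)\bigr)$ overall. The step I expect to require the most care is the aggregation above: one must verify that the separate per-band SFT outputs, each a priori controlling only $\widehat{\tilde g_q\ast f}$ on its own band, genuinely merge into a single clean statement about $\hf$ over all of $B$. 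This hinges on the exact tiling (no frequency double-counted, no band missed), on the $\ell_1$-tail lemma (so the threshold $\delta$ is band-independent rather than tied to the $\ell_1$ tail of the particular $\widehat{\tilde g_q\ast f}$), and on the uniform lower bound $\widehat g\ge\tau$ over each band (so dividing by $(\widehat{\tilde g_q})_\omega$ costs only the benign factor $1/\tau=3$, keeping the estimation accuracy $3\sqrt2\,\delta$ safely below the identification threshold $12\,\delta$, as the conversion argument needs).
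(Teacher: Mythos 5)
Your proposal is correct and follows essentially the same route as the paper's proof: per-band guarantees from the section's lemmas are aggregated over the frequency tiling, the effective noise level is set to $\delta=3\bigl(\tfrac{1}{2s}\|\hf-\hf_s^{\rm opt}\|_1+3\|\f\|_\infty N^{-r}\bigr)$ (your $D$), the Theorem-7-style best $s$-term argument yields the $22\sqrt{s}\,\delta$ bound, and the runtime is the per-iteration sampling-plus-SFT cost times the $\mathcal{O}(\sqrt{r\log N})$ bands. Your explicit verification of the band tiling and the union bound over bands in the Monte Carlo case are details the paper leaves implicit, but the argument is the same.
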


\begin{proof}
Redefine $\delta$ in the proof of Theorem 7 in \cite{Iw-arxiv} as
\[
\delta=\frac{1}{\tau}\left(\frac{1}{2s}\cdot\left\Vert \widehat{f}-\widehat{f}_{s}^{\rm opt}\right\Vert _{1}+3\left\Vert \f\right\Vert _{\infty}N^{-r}\right) = 3\left(\frac{1}{2s}\cdot\left\Vert \hf - \hf_{s}^{\rm opt}\right\Vert _{1}+3\left\Vert \f\right\Vert _{\infty}N^{-r}\right),
\]
and observe that any $\omega\in B=\left[-\left\lceil \frac{N}{2}\right\rceil ,\left\lfloor \frac{N}{2}\right\rfloor \right)\cap\mathbb{Z}$ that is reconstructed by Algorithm~\ref{Alg1} will have a Fourier series coefficient estimate $v_{\omega}$ that satisfies
\[
\left|v_{\omega}-\hf_{\omega}\right| = \left|v_{\omega}-\widehat{f}_{\omega}\right|\leq\sqrt{2}\cdot\delta.
\]
We can thus bound the approximation error by
\begin{align}
\begin{split}
\left\Vert \hf-\mathbf{v}\vert_{S}\right\Vert _{2} & \leq\left\Vert \hf-\hf\vert_{S}\right\Vert _{2}+\left\Vert \hf\vert_{S}-\mathbf{v}\vert_{S}\right\Vert _{2}\leq\left\Vert \hf-\hf\vert_{S}\right\Vert _{2}+2\sqrt{s}\cdot\delta\\
 & =\sqrt{\left\Vert \hf-\hf_{s}^{\rm opt}\right\Vert _{2}^{2}+\sum_{\omega\in R_{s}^{\rm opt}\left(\widehat{f}\right)\backslash S}\left|\widehat{f}_{\omega}\right|^{2}-\sum_{\tilde{\omega}\in S\backslash R_{s}^{\rm opt}\left(\widehat{f}\right)}\left|\widehat{f}_{\tilde{\omega}}\right|^{2}}+2\sqrt{s}\cdot\delta.
\end{split} 
 \label{equ:approximation_error_bound}
\end{align}
In order to make additional progress on \eqref{equ:approximation_error_bound} we must consider the possible magnitudes of $\mathbf{\widehat{f}}$ entries at indices in $S\backslash R_{s}^{\rm opt}\left(\widehat{f}\right)$ and $R_{s}^{\rm opt}\left(\widehat{f}\right)\backslash S$. Careful analysis (in line with the techniques employed in the proof of Theorem 7 of \cite{Iw-arxiv}) indicates that
\[
\sum_{\omega\in R_{s}^{\rm opt}\left(\widehat{f}\right)\backslash S}\left|\widehat{f}_{\omega}\right|^{2}-\sum_{\tilde{\omega}\in S\backslash R_{s}^{\rm opt}\left(\widehat{f}\right)}\left|\widehat{f}_{\tilde{\omega}}\right|^{2}\leq s\cdot\left(8\sqrt{2}+8\right)^{2}\cdot\delta^{2}.
\]
Therefore, in the worst possible case equation \eqref{equ:approximation_error_bound} will remain bounded by
\[
\left\Vert \hf-\mathbf{v}\vert_{S}\right\Vert _{2}\leq\sqrt{\left\Vert \hf-\hf_{s}^{\rm opt}\right\Vert _{2}^{2}+s\cdot\left(8\sqrt{2}+8\right)^{2}\cdot\delta^{2}}+2\sqrt{s}\cdot\delta\leq\left\Vert \hf - \hf_{s}^{\rm opt}\right\Vert _{2}+22\sqrt{s}\cdot\delta.
\]
The error bound stated in \eqref{equ:upper_bound} follows. 

The runtimes follow by observing that $c_2 = \mathcal{O} \left( \alpha\cdot \log^{\frac12} (N)\right) = \mathcal{O}\left( r^{\frac12}\cdot \log^{\frac12} (N) \right)$ as chosen in line 2 of Algorithm ~\ref{Alg1}, and for every choise of $q$ in line 4 of Algorithm ~\ref{Alg1}, all of the evaluations $\left\{ (\tilde{g}_q\ast f)(x_k) \right\}^m_{k=1}$ can be approximated very accurately in just $\mathcal{O}(m r \log N)$-time, where the number of samples $m$ is on the orders described in Theorem \ref{thm:StableRecov}.
\end{proof}

We are now ready to empirically evaluate Algorithm ~\ref{Alg1} with several different SFT algorithms $\mathcal{A}$ used in its line 9.

\section{Numerical Evaluation}
\label{sec:NumEval}

In this section we evaluate the performance of three new discrete SFT Algorithms resulting from Algorithm~\ref{Alg1}: DMSFT-4, DMSFT-6,\footnote{The code for both DMSFT variants is available at \url{https://sourceforge.net/projects/aafftannarborfa/}.}
 and CLW-DSFT.\footnote{The CLW-DSFT code is available at \url{www.math.msu.edu/~markiwen/Code.html}.}  All of them were developed by utilizing different SFT algorithms in line 9 of Algorithm~\ref{Alg1}.  Here DMSFT stands for the \textbf{D}iscrete \textbf{M}ichigan \textbf{S}tate \textbf{F}ourier \textbf{T}ransform algorithm.  Both DMSFT-4 and DMSFT-6 are implementations of Algorithm~\ref{Alg1} that use a randomized version of the SFT algorithm GFFT \cite{segal2013improved} in their line 9.\footnote{Code for GFFT is also available at \url{www.math.msu.edu/~markiwen/Code.html}.}  The only difference between DMSFT-4 and DMSFT-6 is how accurately each one estimates the convolution in line 7 of Algorithm~\ref{Alg1}: for DMSFT-4 we use $\kappa = 4$ in the partial discrete convolution in Lemma~\ref{lem:fewer_points_error} when approximating $\tilde{g}_q\ast f$ at each $x_k$, while for DMSFT-6 we always use $\kappa = 6$.  The CLW-DSFT stands for the \textbf{C}hristlieb \textbf{L}awlor \textbf{W}ang \textbf{D}iscrete \textbf{S}parse \textbf{F}ourier \textbf{T}ransform algorithm. It is an implementation of Algorithm~\ref{Alg1} that uses the SFT developed in \cite{clw} in its line 9, and $\kappa$ varying between $12$ and $20$ for its line 7 convolution estimates (depending on each input vector's Fourier sparsity, etc.).  All of DMSFT-4, DMSFT-6 and CLW-DSFT were implemented in C++ in order to empirically evaluate their runtime and noise robustness characteristics. 
 
We also compare these new implementations' runtime and robustness characteristics with FFTW 3.3.4\footnote{This code is available at \url{http://www.fftw.org/}} and sFFT 2.0\footnote{This code is available at \url{https://groups.csail.mit.edu/netmit/sFFT/}}. 
FFTW is the highly optimized FFT implementation which runs in $\mathcal{O}(N\log N)$-time for input vectors of length $N$.  All the standard discrete Fourier Transforms in the numerical experiments are performed using FFTW 3.3.4 with {\rm FFTW\_MEASURE} plan.  The sFFT 2.0 is a randomized discrete sparse Fourier Transform algorithm written in C++ which is both stable and robust to noise.  It was developed by Indyk et al. in \cite{sFFT2}.  Note that DMSFT-4, DMSFT-6, CLW-DSFT, and sFFT 2.0 are all randomized algorithms designed to approximate discrete DFTs that are approximately $s$-sparse. This means that all of them take both sparsity $s$ and size $N$ of the DFT's $\hf \in \mathbbm{C}^N$ they aim to recover as parameters. In contrast, FFTW can not utilize existing sparsity to its advantage.  Finally, all experiments are run on a Linux CentOS machine with 2.50GHz CPU and 16 GB of RAM. 

\subsection{Experiment Setup}
For the execution time experiments each trial input vector $\f \in \mathbbm{C}^N$ was generated as follows:  First $s$ frequencies were independently selected uniformly at random from $[0, N)\cap \mathbb{Z}$, and then each of these frequencies was assigned a uniform random phase with magnitude $1$ as its Fourier coefficient. The remaining frequencies' Fourier coefficients were then set to zero to form $\hf \in \mathbbm{C}^N$. Finally, the trial input vector $\f$ was then formed via an inverse DFT. 

For each pair of $s$ and $N$ the parameters in each randomized algorithm were chosen so that the probability of correctly recovering all $s$ energetic frequencies was at least 0.9 per trial input.  Every data point in a figure below corresponds to an average over 100 runs on 100 different trial input vectors of this kind.  It is worth mentioning that the parameter tuning process for DMSFT-4 and DMSFT-6 requires significantly less effort than for both CLW-DSFT and sFFT 2.0 since the DMSFT variants only have two parameters (whose default values are generally near-optimal).  

\subsection{Runtime as Input Vector Size Varies}
In Figure \ref{fig:fix_sparsity} we fixed the sparsity to $s=50$ and ran numerical experiments on 8 different input vector lengths $N$: $2^{16}$, $2^{18}$, ..., $2^{30}$. We then plotted the running time (averaged over 100 runs) for DMSFT-4, DMSFT-6, CLW-DSFT, sFFT 2.0, and FFTW.

\begin{figure}
  \centering
    \includegraphics[width=0.5\textwidth]{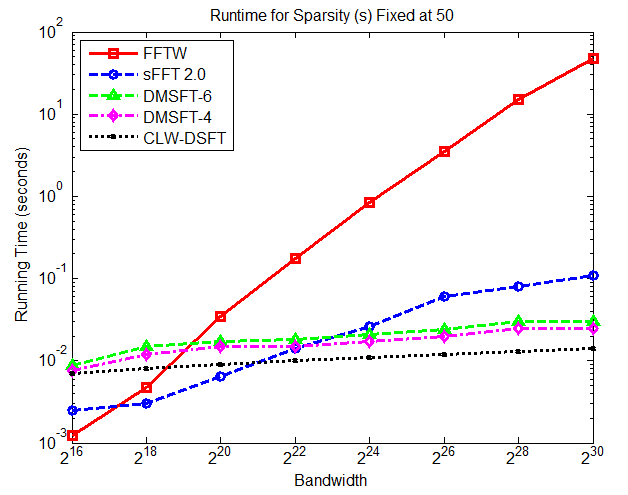}
  \caption{Runtime Comparison at Sparsity (s) Fixed at $50$}
  \label{fig:fix_sparsity}
\end{figure}

As expected, the runtime slope of all the SFT algorithms (i.e. DMSFT-4, DMSFT-6, CLW-DSFT, and sFFT 2.0) is less than the slope of FFTW as $N$ increases.  Although FFTW is fastest for vectors of small size, it becomes the slowest algorithm when the vector size $N$ is greater than $2^{20}$.  Among the randomized algorithms, sFFT 2.0 is the fastest one when $N$ is less than $2^{22}$, but DMSFT-4, DMSFT-6, and CLW-DSFT all outperform sFFT 2.0 with respect to runtime when the input vector's sizes are large enough. The CLW-DSFT implementation becomes faster than sFFT 2.0 when $N$ is approximately $2^{21}$ while DMSFT-4 and DMSFT-6 have better runtime performance than sFFT 2.0 when $N$ is greater than $2^{23}$. 

\begin{figure}
  \centering
    \includegraphics[width=0.51\textwidth]{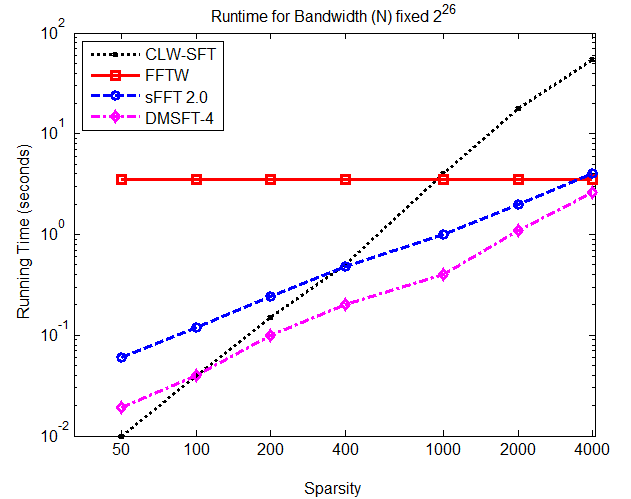}
  \caption{Runtime Comparison at Bandwidth (N) Fixed at $2^{26}$}
  \label{fix_bandwidwh}
\end{figure}

\subsection{Runtime as Sparsity Varies}
In Figure \ref{fix_bandwidwh} we fix the input vector lengths to $N = 2^{26}$ and run the numerical experiments on 7 different values of sparsity $s$: 50, 100, 200, 400, 1000, 2000, and 4000. As expected, the FFTW's runtime is constant as we increase the sparsity. The runtimes of DMSFT-4, CLW-DSFT, and sFFT 2.0 are all essentially linear in $s$.  Here DMSFT-6 has been excluded for ease of viewing/reference -- its runtimes lie directly above those of DMSFT-4 when included in the plot.  Looking at Figure 2 we can see the CLW-DSFT's runtime increases more rapidly with $s$ than that of DMSFT-4 and sFFT 2.0. The runtime of CLW-DSFT becomes the slowest one when sparsity is around 1000.  DMSFT-4 and sFFT 2.0 have approximately the same runtime slope as $s$ increases, and they both have good performance when the sparsity is large. However, DMSFT-4 maintains consistently better runtime performance than sFFT 2.0 for all sparsity values, and is the only algorithm in the plot that still faster than FFTW when the sparsity is 4000.  Indeed, when the sparsity is 4000 the average runtime of DMSFT-4 is $2.68$s and the average runtime of DMSFT-6 is $2.9$s. Both of them remain faster than FFTW ($3.47$s) and sFFT 2.0 ($3.96$s) for this large sparsity (though only DMSFT-4 has been included in the plot above).

\subsection{Robustness to Noise}
In our final set of experiments we test the noise robustness of DMSFT-4, DMSFT-6, CLW-DSFT, sFFT 2.0, and FFTW for different levels of Gaussian noise.  Here the size of each input vector is $N=2^{22}$ and sparsity is fixed at $s = 50$.  The test signals are then generated as before, except that Gaussian noise is added to $\f$ after it is constructed.  More specifically, we first generate $\f$ and then set $\f = \f + \n$ where each entry of $\n$, $n_j$, is an i.i.d. mean $0$ random complex Gaussian value.  The noise vector $\n$ is then rescaled to achieve each desired signal-to-noise ratio (SNR) considered in the experiments.\footnote{The SNR is defined as $SNR = 20\log \frac{\parallel \f \parallel_2}{\parallel \n \parallel_2}$, where $\f$ is the length $N$ input vector and $\n$ is the length $N$ noise vector.}

\begin{figure}
  \centering
    \includegraphics[width=0.51\textwidth]{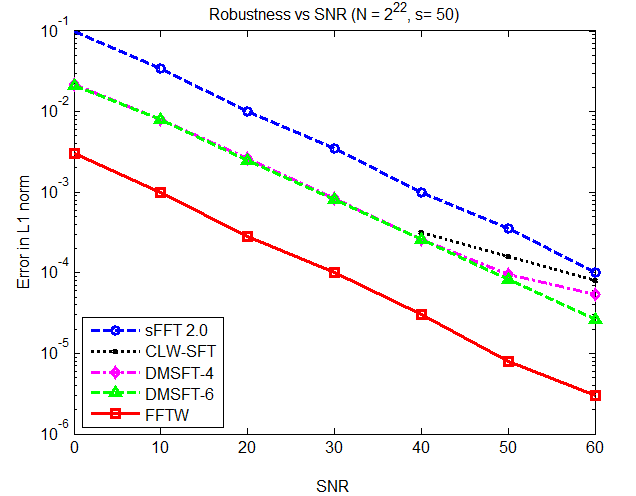}
  \caption{Robustness to Noise (Bandwidth (N) = $2^{22}$, Sparsity (s) = 50).}
  \label{noise}
\end{figure}

Recall that the the randomized algorithms compared herein (DMSFT-4, DMSFT-6, CLW-DSFT, and sFFT 2.0) are all tuned to guarantee exact recovery of $s$-sparse functions with probability at least 0.9 in all experiments. For our noise robustness experiments this ensures that
the correct frequency support, $S$, is found for at least 90 of the 100 trial signals used to generate each point plotted in Figure~\ref{noise}. We use average $L_1$ error to measure the noise robustness of each algorithm for each of these at least 90 trial runs.  The average $L_1$ error is defined as
\[
\textit{Average $L_1$ Error} = \frac{1}{s}\sum_{\omega \in S} \big|\hat{f}_{\omega} - z_{\omega} \big |
\]
where $S$ is the true frequency support of the input vector $\f$, $\hat{f}_{\omega}$ are the true input Fourier coefficients for all frequencies $\omega\in S$, and $z_{\omega}$ are their recovered approximations from each algorithm. Figure \ref{noise} graphs the averaged average $L_1$ error over the at least 90 trial signals where each method correctly identified $S$.

It can be seen in Figure \ref{noise} that DMSFT-4, DMSFT-6, sFFT 2.0, and FFTW are all robust to noise. As expected, FFTW has the best performance in this test. DMSFT-4 and DMSFT-6 are both more robust to noise when compared to sFFT 2.0. As for CLW-DSFT, it cannot guarantee a 0.9 probability of correctly recovering $S$ when the SNR is less than $40$ and so is not plotted for those SNR values.  This is due to the base energetic frequency identification methods of \cite{lawlor2013adaptive,clw} being inherently ill conditioned, though the CLW-DSFT results look better when compared to the true $\hf$ with respect to, e.g., earth mover's distance.  Frequencies are often estimated incorrectly by CLW-DSFT at higher noise levels, but when they are they are usually at least close enough to the true frequencies to be informative.  

\section{Conclusion}
\label{sec:Conc}

Let $\mathcal{A}$ be a sublinear-time sparse FFT algorithm which utilizes unequally spaced samples from a given periodic function $f: [-\pi, \pi] \rightarrow \mathbbm{C}$ in order to rapidly approximate its sequence of Fourier series coefficients $\hat{f} \in \ell^2$.  In this paper we propose a generic method of transforming any such algorithm $\mathcal{A}$ into a sublinear-time sparse DFT algorithm which rapidly approximates $\hf$ from a given input vector $\f \in \mathbbm{C}^N$.  As a result we are able to construct several new sublinear-time sparse DFT algorithms from existing sparse Fourier algorithms which utilize unequally spaced function samples \cite{segal2013improved,Iw-arxiv,lawlor2013adaptive,clw}.  The best of these new algorithms is shown to outperform existing discrete sparse Fourier transform methods with respect to both runtime and noise robustness for large vector lengths $N$.  In addition, we also present several new theoretical discrete sparse FFT robust recovery guarantees.  These include the first known theoretical guarantees for entirely deterministic and discrete sparse DFT algorithms which hold for arbitrary input vectors $\f \in \mathbbm{C}^N$.

\bibliographystyle{abbrv}
\bibliography{DiscreteSFT}

\appendix

\section{Proof of Lemmas~\ref{lem:Decay_of_Periodic_Gaussian},~\ref{lem:Periodic_Gaussian_FC} and~\ref{lem:Periodic_Gaussian_Decay}}
\label{sec:proof_lems_1_2}
We will restate each lemma before its proof for ease of reference.

\begin{lem}[Restatement of Lemma~\ref{lem:Decay_of_Periodic_Gaussian}]
The $2\pi-$periodic Gaussian $g:\left[-\pi,\pi\right]\to\mathbb{R}^{+}$ has
\[
g\left(x\right) \leq \left( \frac{3}{c_1}+\frac{1}{\sqrt{2\pi}} \right)\e^{-\frac{x^{2}}{2c_{1}^{2}}}
\]
for all $x \in \left[-\pi,\pi\right]$.
\end{lem}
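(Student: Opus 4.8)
The plan is to isolate the $n=0$ term of the defining series \eqref{equ:Def_Periodic_Gaussian}, which contributes exactly $\frac{1}{c_1}\e^{-x^{2}/(2c_1^{2})}$, and to show that all the remaining terms together contribute at most $\left(\frac{2}{c_1}+\frac{1}{\sqrt{2\pi}}\right)\e^{-x^{2}/(2c_1^{2})}$. After dividing through by $\e^{-x^{2}/(2c_1^{2})}$, it therefore suffices to bound
\[
\sum_{n\neq 0}\e^{-\frac{(x-2n\pi)^{2}-x^{2}}{2c_1^{2}}}.
\]
First I would record the elementary identity $(x-2n\pi)^{2}-x^{2}=4n\pi(n\pi-x)$. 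For $x\in[-\pi,\pi]$ and $n\ge 1$ one has $n\pi-x\ge(n-1)\pi$, so this exponent is at least $4n(n-1)\pi^{2}$; for $n\le -1$, writing $m=-n\ge 1$, the same quantity equals $4m\pi(m\pi+x)\ge 4m(m-1)\pi^{2}$. Hence each off-diagonal term is bounded by $\e^{-2m(m-1)\pi^{2}/c_1^{2}}$ with $m=|n|$, and the sum above is at most $2\sum_{m=1}^{\infty}\e^{-2m(m-1)\pi^{2}/c_1^{2}}$. Pulling out the $m=1$ term, which equals $1$, accounts for the ``$\frac{2}{c_1}$'' in the claimed bound (and, together with the $n=0$ term, explains the constant $3$: it is $1+2$), and leaves the tail $\sum_{m\ge 2}\e^{-2m(m-1)\pi^{2}/c_1^{2}}$ to control.

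The remaining step is to show $\frac{2}{c_1}\sum_{m\ge 2}\e^{-2m(m-1)\pi^{2}/c_1^{2}}\le \frac{1}{\sqrt{2\pi}}$. Reindexing with $k=m-1\ge 1$ and using $k(k+1)\ge k^{2}$, the tail is at most $\sum_{k=1}^{\infty}\e^{-2\pi^{2}k^{2}/c_1^{2}}$; since $t\mapsto \e^{-2\pi^{2}t^{2}/c_1^{2}}$ is nonnegative and decreasing on $[0,\infty)$, this sum is at most $\int_{0}^{\infty}\e^{-2\pi^{2}t^{2}/c_1^{2}}\,dt=\frac{c_1}{2\sqrt{2\pi}}$. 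Multiplying by $2/c_1$ yields exactly $\frac{1}{\sqrt{2\pi}}$, which closes the estimate and hence proves the lemma.

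The only point that requires care — and really the sole obstacle — is this last tail estimate: a crude geometric-series bound (say via $m(m-1)\ge 2m$) blows up as $c_1\to\infty$, so one genuinely needs the integral comparison $\sum_{k\ge 1}\e^{-2\pi^{2}k^{2}/c_1^{2}}\le\int_{0}^{\infty}\e^{-2\pi^{2}t^{2}/c_1^{2}}\,dt$, which happens to be asymptotically sharp in that limit and is precisely what produces the constant $\frac{1}{\sqrt{2\pi}}$. Everything else is routine manipulation of the Gaussian series.
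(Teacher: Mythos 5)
Your proof is correct and follows essentially the same route as the paper's: the terms with $n=0,\pm1$ supply the factor $3/c_1$, and the $|n|\ge 2$ tail is controlled by a sum-to-integral comparison with a Gaussian, which is exactly what produces the $1/\sqrt{2\pi}$. The only cosmetic difference is that you factor out $\e^{-x^2/(2c_1^2)}$ first and use the identity $(x-2n\pi)^2-x^2=4n\pi(n\pi-x)$, whereas the paper bounds the shifted Gaussians directly by $\e^{-(2n-1)^2\pi^2/(2c_1^2)}$ before integrating; both yield the same constants.
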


\begin{proof}
Observe that
\begin{align*}
c_{1}g\left(x\right) =\sum_{n=-\infty}^{\infty}\e^{-\frac{\left(x-2n\pi\right)^{2}}{2c_{1}^{2}}}
 & =\e^{-\frac{x^{2}}{2c_{1}^{2}}}+\e^{-\frac{\left(x-2\pi\right)^{2}}{2c_{1}^{2}}}+\e^{-\frac{\left(x+2\pi\right)^{2}}{2c_{1}^{2}}}+\sum_{\left|n\right|\geq2}\e^{-\frac{\left(x-2n\pi\right)^{2}}{2c_{1}^{2}}}\\
 & \leq3\e^{-\frac{x^{2}}{2c_{1}^{2}}}+\int_{1}^{\infty}\e^{-\frac{\left(x-2n\pi\right)^{2}}{2c_{1}^{2}}}dn+\int_{1}^{\infty}\e^{-\frac{\left(x+2n\pi\right)^{2}}{2c_{1}^{2}}}dn
\end{align*}
holds since the series above have monotonically decreasing positive terms, and $x\in\left[-\pi,\pi\right]$. 

Now, if $x\in\left[0,\pi\right]$ and $n\geq1$, one has
\[
\e^{-\frac{\left(2n+1\right)^{2}\pi^{2}}{2c_{1}^{2}}}\leq\e^{-\frac{\left(x+2n\pi\right)^{2}}{2c_{1}^{2}}}\leq\e^{-\frac{4n^{2}\pi^{2}}{2c_{1}^{2}}}\leq\e^{-\frac{\left(x-2n\pi\right)^{2}}{2c_{1}^{2}}}\leq\e^{-\frac{\left(2n-1\right)^{2}\pi^{2}}{2c_{1}^{2}}},
\]
which yields
\begin{align*}
c_{1}g\left(x\right) & 
 ~\leq~3\e^{-\frac{x^{2}}{2c_{1}^{2}}}+2\int_{1}^{\infty}\e^{-\frac{\pi^{2}\left(2n-1\right)^{2}}{2c_{1}^{2}}}dn
 ~=~ 3\e^{-\frac{x^{2}}{2c_{1}^{2}}}+\frac{1}{2}\left(\int_{-\infty}^{\infty}\e^{-\frac{\pi^{2}m^{2}}{2c_{1}^{2}}}dm-\int_{-1}^{1}\e^{-\frac{\pi^{2}m^{2}}{2c_{1}^{2}}}dm\right)\\
 & =3\e^{-\frac{x^{2}}{2c_{1}^{2}}}+\frac{c_{1}}{\sqrt{2\pi}}-\frac{1}{2}\int_{-1}^{1}\e^{-\frac{\pi^{2}m^{2}}{2c_{1}^{2}}}dm.
\end{align*}
Using Lemma \ref{lem:gaussian_integral_bound} to bound the last integral we can now get that
\begin{align*}
c_{1}g\left(x\right) & \leq 3\e^{-\frac{x^{2}}{2c_{1}^{2}}}+\frac{c_{1}}{\sqrt{2\pi}}-\frac{1}{2}\frac{\sqrt{2}c_{1}}{\pi}\sqrt{\pi\left(1-\e^{-\frac{\pi^{2}}{2c_{1}^{2}}}\right)}
  = 3\e^{-\frac{x^{2}}{2c_{1}^{2}}}+\frac{c_{1}}{\sqrt{2\pi}}\left(1-\sqrt{\left(1-\e^{-\frac{\pi^{2}}{2c_{1}^{2}}}\right)}\right)\\
& \leq 3\e^{-\frac{x^{2}}{2c_{1}^{2}}}+\frac{c_{1}}{\sqrt{2\pi}}\e^{-\frac{\pi^{2}}{2c_{1}^{2}}}
\leq3\e^{-\frac{x^{2}}{2c_{1}^{2}}}+\frac{c_{1}}{\sqrt{2\pi}}\e^{-\frac{x^{2}}{2c_{1}^{2}}}.
\end{align*}
Recalling now that $g$ is even we can see that this inequality will also hold for all $x \in [-\pi,0]$ as well.
\end{proof}

\begin{lem}[Restatement of Lemma~\ref{lem:Periodic_Gaussian_FC}]
The $2\pi-$periodic Gaussian $g:\left[-\pi,\pi\right]\to\mathbb{R}^{+}$ has
\[
\widehat{g}_{\omega} = \frac{1}{\sqrt{2\pi}}\e^{-\frac{c_1^2 \omega^2}{2}}
\]
for all $\omega\in\mathbb{Z}$.  Thus, $\widehat{g}=\left\{ \widehat{g}_{\omega}\right\} _{\omega\in\mathbb{Z}}\in\ell^{2}$ decreases monotonically as $|\omega|$ increases, and also has $\| \widehat{g} \|_{\infty} = \frac{1}{\sqrt{2 \pi}}$.
\end{lem}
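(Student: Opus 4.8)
The plan is to compute $\widehat{g}_\omega$ directly from the integral definition of the Fourier coefficient, collapsing the periodization sum into a single Gaussian integral over all of $\R$, and then to read off the two stated consequences. First I would write
\[
\widehat{g}_\omega=\frac{1}{2\pi}\int_{-\pi}^{\pi}g(x)\,\e^{-\i\omega x}\,dx=\frac{1}{2\pi c_1}\int_{-\pi}^{\pi}\left(\sum_{n\in\Z}\e^{-\frac{(x-2n\pi)^2}{2c_1^2}}\right)\e^{-\i\omega x}\,dx,
\]
and interchange the sum and the integral. Since $|\e^{-\i\omega x}|=1$ and the series $\sum_{n\in\Z}\e^{-(x-2n\pi)^2/(2c_1^2)}=c_1\,g(x)$ has nonnegative terms with finite integral over $[-\pi,\pi]$ (as $g$ is continuous), Fubini--Tonelli justifies the interchange.

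Next, in the $n$-th summand I would substitute $u=x-2n\pi$. Since $\omega,n\in\Z$ we have $\e^{-\i\omega x}=\e^{-\i\omega(u+2n\pi)}=\e^{-\i\omega u}$, so the $n$-th term equals $\int_{(2n-1)\pi}^{(2n+1)\pi}\e^{-u^2/(2c_1^2)}\e^{-\i\omega u}\,du$. Because the intervals $\big[(2n-1)\pi,(2n+1)\pi\big]$, $n\in\Z$, partition $\R$, summing over $n$ gives
\[
\widehat{g}_\omega=\frac{1}{2\pi c_1}\int_{-\infty}^{\infty}\e^{-\frac{u^2}{2c_1^2}}\,\e^{-\i\omega u}\,du.
\]
I would then invoke the standard Gaussian Fourier integral $\int_{-\infty}^{\infty}\e^{-u^2/(2c_1^2)}\e^{-\i\omega u}\,du=c_1\sqrt{2\pi}\,\e^{-c_1^2\omega^2/2}$, which follows by completing the square, $-\tfrac{u^2}{2c_1^2}-\i\omega u=-\tfrac{1}{2c_1^2}\big(u+\i c_1^2\omega\big)^2-\tfrac{c_1^2\omega^2}{2}$, shifting the contour back to $\R$, and using $\int_{-\infty}^{\infty}\e^{-t^2}\,dt=\sqrt{\pi}$. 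Substituting this in yields $\widehat{g}_\omega=\frac{1}{\sqrt{2\pi}}\e^{-c_1^2\omega^2/2}$, as claimed.

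For the two consequences: since $\omega\mapsto\e^{-c_1^2\omega^2/2}$ is a strictly decreasing function of $\omega^2$, the sequence $\{\widehat{g}_\omega\}$ decreases monotonically as $|\omega|$ increases; it is plainly square-summable (it decays at a Gaussian rate), so $\widehat{g}\in\ell^2$; and its largest value is attained at $\omega=0$, giving $\|\widehat{g}\|_\infty=\widehat{g}_0=\frac{1}{\sqrt{2\pi}}$. There is no real obstacle here: the only points needing a word of care are the interchange of summation and integration, handled by nonnegativity and Fubini--Tonelli, and --- if one wants a fully self-contained argument --- the contour-shift justification of the Gaussian Fourier integral. As an alternative to the direct computation one could instead quote the Poisson summation formula, since $g$ is the $2\pi$-periodization of $\frac{1}{c_1}\e^{-x^2/(2c_1^2)}$ and hence its Fourier coefficients are $\frac{1}{2\pi}$ times the Fourier transform of that Gaussian sampled at the integers, again producing $\frac{1}{\sqrt{2\pi}}\e^{-c_1^2\omega^2/2}$.
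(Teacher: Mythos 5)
Your proposal is correct and follows essentially the same route as the paper's proof: interchange the periodization sum with the Fourier integral, use $\e^{-\i\omega x}=\e^{-\i\omega(x-2n\pi)}$ to shift each summand onto $\left[(2n-1)\pi,(2n+1)\pi\right]$, reassemble the full-line Gaussian integral, and evaluate it with the standard formula. You additionally spell out the Fubini--Tonelli justification and the contour-shift evaluation, which the paper leaves implicit.
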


\begin{proof}
Starting with the definition of the Fourier transform, we calculate 
\begin{eqnarray*}
\widehat{g}_{\omega} & = & \frac{1}{c_1}\sum_{n=-\infty}^{\infty}\frac{1}{2\pi}\int_{-\pi}^{\pi}\e^{-\frac{(x-2n\pi)^{2}}{2c_1^{2}}}\e^{-\i\omega x}~dx\\
 & = & \frac{1}{c_1}\sum_{n=-\infty}^{\infty}\frac{1}{2\pi}\int_{-\pi}^{\pi}\e^{-\frac{(x-2n\pi)^{2}}{2c_1^{2}}}\e^{-\i\omega(x-2n\pi)}~dx\\
 & = & \frac{1}{c_1}\sum_{n=-\infty}^{\infty}\frac{1}{2\pi}\int_{-\pi-2n\pi}^{\pi-2n\pi}\e^{-\frac{u{}^{2}}{2c_1^{2}}}\e^{-\i\omega u}~du\\
 & = & \frac{1}{2\pi c_1}\int_{-\infty}^{\infty}\e^{-\frac{u{}^{2}}{2c_1^{2}}} \e^{-\i\omega u}~du\\
 & = & \frac{c_1\sqrt{2\pi}}{2\pi c_1} \e^{-\frac{c_1^{2}\omega^{2}}{2}}\\
 & = & \frac{\e^{-\frac{c_1^{2}\omega^{2}}{2}}}{\sqrt{2\pi}}.
\end{eqnarray*}
The last two assertions now follow easily.
\end{proof}

\begin{lem}[Restatement of Lemma~\ref{lem:Periodic_Gaussian_Decay}]
Choose any $\tau \in \left(0, \frac{1}{\sqrt{2\pi}} \right)$, $\alpha \in \left[1, \frac{N}{\sqrt{\ln N}} \right]$, and $\beta \in \left(0 , \alpha \sqrt{\frac{\ln \left(1/\tau \sqrt{2\pi} \right)}{2}} ~\right]$. Let $c_1 = \frac{\beta \sqrt{\ln N}}{N}$ in the definition of the periodic Gaussian $g$ from \eqref{equ:Def_Periodic_Gaussian}. Then $\widehat{g}_{\omega} \in \left[\tau, \frac{1}{\sqrt{2\pi}} \right]$ for all $\omega \in \mathbb{Z}$ with $|\omega| \leq \Bigl\lceil \frac{N}{\alpha \sqrt{\ln N}}\Bigr\rceil$.
\end{lem}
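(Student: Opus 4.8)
The plan is to reduce everything to the explicit formula for $\widehat{g}_\omega$ supplied by Lemma~\ref{lem:Periodic_Gaussian_FC}, namely $\widehat{g}_\omega = \frac{1}{\sqrt{2\pi}}\e^{-c_1^2\omega^2/2}$, after which only an elementary estimate remains. The upper bound is immediate: since $c_1 > 0$ we have $\e^{-c_1^2\omega^2/2} \in (0,1]$, so $\widehat{g}_\omega \leq \frac{1}{\sqrt{2\pi}}$ for every $\omega \in \mathbb{Z}$. For the lower bound, I would use the monotonicity statement in Lemma~\ref{lem:Periodic_Gaussian_FC}: $\widehat{g}_\omega$ decreases as $|\omega|$ grows, so it suffices to check $\widehat{g}_\omega \geq \tau$ at the single extreme value $|\omega| = \bigl\lceil \frac{N}{\alpha\sqrt{\ln N}}\bigr\rceil$.

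Next I would rewrite the target inequality $\frac{1}{\sqrt{2\pi}}\e^{-c_1^2\omega^2/2} \geq \tau$ in the equivalent form $c_1^2\omega^2 \leq 2\ln\!\bigl(1/(\tau\sqrt{2\pi})\bigr)$, noting that the right-hand side is strictly positive precisely because the hypothesis $\tau < \frac{1}{\sqrt{2\pi}}$ forces $\tau\sqrt{2\pi} < 1$. Substituting $c_1 = \frac{\beta\sqrt{\ln N}}{N}$ gives $c_1^2 = \frac{\beta^2\ln N}{N^2}$, so what must be shown is $\frac{\beta^2\ln N}{N^2}\bigl\lceil \frac{N}{\alpha\sqrt{\ln N}}\bigr\rceil^2 \leq 2\ln\!\bigl(1/(\tau\sqrt{2\pi})\bigr)$.

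The only step needing a small observation is controlling the ceiling. The hypothesis $\alpha \leq \frac{N}{\sqrt{\ln N}}$ guarantees $\frac{N}{\alpha\sqrt{\ln N}} \geq 1$, and for any real $x \geq 1$ one has $\lceil x\rceil \leq x+1 \leq 2x$; hence $\bigl\lceil \frac{N}{\alpha\sqrt{\ln N}}\bigr\rceil^2 \leq \frac{4N^2}{\alpha^2\ln N}$. Plugging this in collapses the left-hand side to $\frac{4\beta^2}{\alpha^2}$, and the remaining inequality $\frac{4\beta^2}{\alpha^2} \leq 2\ln\!\bigl(1/(\tau\sqrt{2\pi})\bigr)$ is exactly a restatement of the hypothesis $\beta \leq \alpha\sqrt{\tfrac{\ln(1/\tau\sqrt{2\pi})}{2}}$, which finishes the proof. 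There is no genuine obstacle here; the one thing to be careful about is simply that the admissible range of $\beta$ was chosen to absorb the crude bound $\lceil x\rceil \leq 2x$, so no sharper handling of the ceiling is needed (or helpful).
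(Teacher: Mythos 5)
Your proposal is correct and follows essentially the same route as the paper: both reduce to the extreme frequency $|\omega|=\bigl\lceil \frac{N}{\alpha\sqrt{\ln N}}\bigr\rceil$ via the explicit formula and monotonicity from Lemma~\ref{lem:Periodic_Gaussian_FC}, and both absorb the ceiling through $\lceil x\rceil \leq x+1 \leq 2x$ (the paper phrases the last step as $N+\alpha\sqrt{\ln N}\leq 2N$), landing on the hypothesis for $\beta$. No issues.
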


\begin{proof}
By Lemma~\ref{lem:Periodic_Gaussian_FC} above it suffices to show that
\[
\frac{1}{\sqrt{2\pi}}\e^{-\frac{c_1^2 \left(\bigl\lceil \frac{N}{\alpha \sqrt{\ln N}}\bigr\rceil\right)^2}{2}} \geq \tau,
\]
which holds if and only if
\begin{eqnarray*}
c_1^2 \left(\left\lceil \frac{N}{\alpha \sqrt{\ln N}}\right\rceil\right)^2 &\leq & 2 \ln \left( \frac{1}{\tau \sqrt{2\pi}}\right)\\
c_1 &\leq & \frac{\sqrt{2 \ln \left( \frac{1}{\tau \sqrt{2\pi}}\right)}}{\left\lceil \frac{N}{\alpha \sqrt{\ln N}}\right\rceil}.
\end{eqnarray*}
Thus, it is enough to have
\[
c_1 \leq  \frac{\sqrt{2 \ln \left( \frac{1}{\tau \sqrt{2\pi}}\right)}}{ \frac{N}{\alpha \sqrt{\ln N}} + 1} ~=~\frac{\alpha \sqrt{2 \ln \left( \frac{1}{\tau\sqrt{2\pi}}\right)\ln N}}{ N + \alpha \sqrt{\ln N}},
\]
or,
\[
c_1 = \frac{\beta \sqrt{\ln N}}{N} \leq \frac{\alpha \sqrt{2 \ln \left( \frac{1}{\tau\sqrt{2\pi}}\right)\ln N}}{2N} \leq \frac{\alpha \sqrt{2 \ln \left( \frac{1}{\tau\sqrt{2\pi}}\right)\ln N}}{ N + \alpha \sqrt{\ln N}}.
\]
This, in turn, is guaranteed by our choice of $\beta$.
\end{proof}

\section{Proof of Lemma~\ref{lem:SFT_Ident_Coef_Est_in_Noise} and Theorem~\ref{thm:StableRecov}}
\label{sec:proof_lem_3_thm_2}
We will restate Lemma~\ref{lem:SFT_Ident_Coef_Est_in_Noise} before its proof for ease of reference.

\begin{lem}[Restatement of Lemma~\ref{lem:SFT_Ident_Coef_Est_in_Noise}]
Let $s, \epsilon^{-1} \in \mathbb{N} \setminus \{ 1 \}$ with $(s/\epsilon) \geq 2$, and $\n \in \mathbb{C}^m$ be an arbitrary noise vector.  There exists a set of $m$ points $\left\{ x_k \right\}^m_{k=1} \subset [-\pi, \pi]$ such that Algorithm 3 on page 72 of \cite{Iw-arxiv}, when given access to the corrupted samples $\left\{ f(x_k) + n_k \right \}^m_{k=1}$, will identify a subset $S \subseteq B$ which is guaranteed to contain all $\omega \in B$ with
\begin{equation}
\left|\widehat{f}_{\omega} \right| > 4 \left( \frac{\epsilon \cdot \left\| \hf - \hf^{\rm opt}_{(s/\epsilon)} \right\|_1}{s} + \left\| \widehat{f} - \widehat{f}\vert_{B} \right\|_1 + \| \n \|_\infty \right).
\label{equ:GSFT_Identify2}
\end{equation}
Furthermore, every $\omega \in S$ returned by Algorithm 3 will also have an associate Fourier series coefficient estimate $z_{\omega} \in \mathbb{C}$ which is guaranteed to have 
\begin{equation}
\left| \widehat{f}_{\omega} - z_{\omega} \right| \leq \sqrt{2} \left( \frac{\epsilon \cdot \left\| \hf - \hf^{\rm opt}_{(s/\epsilon)} \right\|_1}{s} + \left\| \widehat{f} - \widehat{f}\vert_{B} \right\|_1 + \| \n \|_\infty \right).
\label{equ:GSFT_Estimate2}
\end{equation}
Both the number of required samples, $m$, and Algorithm 3's operation count are
\begin{equation}
\mathcal{O} \left( \frac{s^2 \cdot \log^4 (N)}{\log \left(\frac{s}{\epsilon} \right) \cdot \epsilon^2} \right).
\end{equation}

If succeeding with probability $(1-\delta) \in [2/3,1)$ is sufficient, and $(s/\epsilon) \geq 2$, the Monte Carlo variant of Algorithm 3 referred to by Corollary 4 on page 74 of \cite{Iw-arxiv} may be used.  This Monte Carlo variant reads only a randomly chosen subset of the noisy samples utilized by the deterministic algorithm,
$$\left\{ f(\tilde{x}_k) + \tilde{n}_k \right \}^{\tilde{m}}_{k=1} \subseteq \left\{ f(x_k) + n_k \right \}^m_{k=1},$$
yet it still outputs a subset $S \subseteq B$ which is guaranteed to simultaneously satisfy both of the following properties with probability at least $1-\delta$: 
\begin{enumerate}
\item [(i)] $S$ will contain all $\omega \in B$ satisfying \eqref{equ:GSFT_Identify2}, and 
\item [(ii)] all $\omega \in S$ will have an associated coefficient estimate $z_{\omega} \in \mathbb{C}$ satisfying \eqref{equ:GSFT_Estimate2}.
\end{enumerate}  
Finally, both this Monte Carlo variant's number of required samples, $\tilde{m}$, as well as its operation count will also always be
\begin{equation}
\mathcal{O} \left( \frac{s}{\epsilon} \cdot \log^3 (N) \cdot \log \left( \frac{N}{\delta} \right) \right).
\end{equation}
\end{lem}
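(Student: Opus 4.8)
The plan is to reduce the claim to the analysis of Algorithm~3 already carried out in \cite{Iw-arxiv}, whose guarantees for a bandlimited input are phrased in terms of (i) the intrinsic term $\frac{\epsilon\|\hf-\hf^{\rm opt}_{(s/\epsilon)}\|_1}{s}$ measuring the Fourier compressibility of $f$, and (ii) the worst-case $\ell^\infty$ perturbation of the samples the algorithm is actually fed. The key observation is that when the true $2\pi$-periodic function is not bandlimited and the samples carry additive noise $\n$, the samples handed to Algorithm~3 are exactly the samples of the degree-$\le N/2$ truncation $\widehat{f}\vert_B$ perturbed by the two quantities $\bigl(\sum_{\omega\notin B}\widehat{f}_\omega\e^{\i\omega x_k}\bigr)_k$ and $\n$, whose $\ell^\infty$ norms are bounded by $\|\widehat{f}-\widehat{f}\vert_B\|_1$ and $\|\n\|_\infty$ respectively. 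So the effective per-sample perturbation fed to Algorithm~3 has $\ell^\infty$ norm at most $\|\widehat{f}-\widehat{f}\vert_B\|_1+\|\n\|_\infty$, and it suffices to rerun the bandlimited analysis of \cite{Iw-arxiv} with this quantity in place of its sample-noise parameter.

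Carrying this out, I would first recall from \cite{Iw-arxiv} that Algorithm~3 accesses the samples only through short normalized discrete Fourier sums over arithmetic progressions of the nodes (the number-theoretic hashing at the core of the method), followed by median and majority-vote steps across several co-prime hash moduli; an additive $\ell^\infty$ sample perturbation of size $\eta'$ therefore propagates, without amplification, to a perturbation of size at most $\eta'$ in every bucket estimate. The identification step of \cite{Iw-arxiv} then flags every $\omega\in B$ with $|\widehat{f}_\omega|$ above four times the total noise level, and flags no $\omega$ whose coefficient is small relative to it; with total noise level $\frac{\epsilon\|\hf-\hf^{\rm opt}_{(s/\epsilon)}\|_1}{s}+\|\widehat{f}-\widehat{f}\vert_B\|_1+\|\n\|_\infty$ this is exactly \eqref{equ:GSFT_Identify2}. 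The coefficient-estimation step (a median of the perturbed bucket values, whose error is at most $\sqrt{2}$ times the total noise level) then gives \eqref{equ:GSFT_Estimate2}. Because neither the node set nor the arithmetic of Algorithm~3 depends on $\n$, the sample count $m$ and the operation count are unchanged, i.e. $\mathcal{O}\!\left(\frac{s^2\log^4 N}{\log(s/\epsilon)\cdot\epsilon^2}\right)$.

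For the Monte Carlo variant I would invoke Corollary~4 of \cite{Iw-arxiv} essentially verbatim: it subsamples the deterministic node set and reruns the same bucket estimators on a random sub-collection of the (now perturbed) samples, and the $\ell^\infty$ bound $\|\widehat{f}-\widehat{f}\vert_B\|_1+\|\n\|_\infty$ on the per-sample perturbation holds a fortiori on any subset. Hence, with probability at least $1-\delta$, the returned set $S$ simultaneously satisfies \eqref{equ:GSFT_Identify2} and \eqref{equ:GSFT_Estimate2}; the randomness affects only which samples are read, not the worst-case noise bound, so the sample and operation counts $\mathcal{O}\!\left(\frac{s}{\epsilon}\log^3 N\cdot\log(N/\delta)\right)$ are exactly those of Corollary~4.

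The main obstacle is bookkeeping rather than conceptual: one must follow the constants in the median and energy-estimation arguments of \cite{Iw-arxiv} carefully enough to confirm that the effective sample perturbation enters the final identification and estimation thresholds with coefficient exactly one inside the parentheses, so that the constants $4$ and $\sqrt{2}$ carry over intact, and to check that Algorithm~3 never divides a sample-derived quantity by something that could be small (which would amplify the perturbation). Verifying these points inside the proof of Theorem~7 of \cite{Iw-arxiv} is the only real work; everything else is a transcription of that proof with the sample-noise parameter set to $\|\widehat{f}-\widehat{f}\vert_B\|_1+\|\n\|_\infty$.
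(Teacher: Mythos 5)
Your proposal is correct and follows essentially the same route as the paper: the one substantive observation in both is that Algorithm~3 touches the samples only through normalized DFT sums (the bucket estimates $\left({\it \mathcal{G}_{\lambda,K}\tilde{\psi}{\bf A}}\right)_j$), so an $\ell^\infty$ sample perturbation passes into each bucket estimate without amplification, after which one reruns Lemma~6, Theorem~7, and Corollary~4 of \cite{Iw-arxiv} with the total noise level redefined to $\frac{\epsilon\|\hf-\hf^{\rm opt}_{(s/\epsilon)}\|_1}{s}+\|\widehat{f}-\widehat{f}\vert_B\|_1+\|\n\|_\infty$. The only cosmetic difference is that you fold the out-of-band tail into the sample perturbation of the bandlimited truncation, whereas the paper keeps that term where it already appears in Lemmas~3 and~5 of \cite{Iw-arxiv} and only adds the $\|\n\|_\infty$ contribution via the triangle inequality.
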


\begin{proof}
The proof of this lemma involves a somewhat tedious and uninspired series of minor modifications to various results from \cite{Iw-arxiv}.  In what follows we will outline the portions of that paper which need to be changed in order to obtain the stated lemma.  Algorithm 3 on page 72 of \cite{Iw-arxiv} will provide the basis of our discussion.

In the first paragraph of our lemma we are provided with $m$-contaminated evaluations of $f$, $\left\{ f(x_k) + n_k \right \}^m_{k=1}$, at the set of $m$ points $\left\{ x_k \right\}^m_{k=1} \subset [-\pi, \pi]$ required by line 4 of Algorithm 1 on page 67 of \cite{Iw-arxiv}.  These contaminated evaluations of $f$ will then be used to approximate the vector ${\it \mathcal{G}_{\lambda,K} \tilde{\psi} {\bf A}} \in \mathbb{C}^m$ in line 4 of Algorithm 3.  More specifically, using $(18)$ on page 67 of \cite{Iw-arxiv} one can see that each $\left({\it \mathcal{G}_{\lambda,K} \tilde{\psi} {\bf A}} \right)_j \in \mathbb{C}$ is effectively computed via a DFT 
\begin{equation}
\left({\it \mathcal{G}_{\lambda,K} \tilde{\psi} {\bf A}} \right)_j = \frac{1}{s_j} \sum^{s_j - 1}_{k=0} f \left(-\pi + \frac{2 \pi k}{s_j} \right) \e^{\frac{-2 \pi \i k h_j}{s_j}}
\label{equ:DFT_GKlam_Comp}
\end{equation}
for some integers $0 \leq h_j < s_j$.  Note that we are guaranteed to have noisy evaluations of $f$ at each of these points by assumption.  That is, we have $f \left( x_{j,k} \right) + n_{j,k}$ for all $x_{j,k} := -\pi + \frac{2 \pi k}{s_j}$, $k = 0, \dots, s_j - 1$. 

We therefore approximate each $\left({\it \mathcal{G}_{\lambda,K} \tilde{\psi} {\bf A}} \right)_j$ via an approximate DFT
as per \eqref{equ:DFT_GKlam_Comp} by
$$E_j :=  \frac{1}{s_j} \sum^{s_j - 1}_{k=0} \left( f \left( x_{j,k} \right) + n_{j,k} \right)\e^{\frac{-2 \pi \i k h_j}{s_j}}.$$
One can now see that 
\begin{equation}
\left| E_j - \left({\it \mathcal{G}_{\lambda,K} \tilde{\psi} {\bf A}} \right)_j \right| = \left| \frac{1}{s_j} \sum^{s_j - 1}_{k=0} n_{j,k} \e^{\frac{-2 \pi \i k h_j}{s_j}} \right| \leq \frac{1}{s_j} \sum^{s_j - 1}_{k=0} \left| n_{j,k} \right| \leq \| \n \|_{\infty}
\label{equ:E_approx_error_to_GKlam}
\end{equation}
holds for all $j$.
Every entry of both ${\it \mathcal{E}_{s_1,K} \tilde{\psi} {\bf A}}$ and ${\it \mathcal{G}_{\lambda,K} \tilde{\psi} {\bf A}}$ referred to in Algorithm 3 will therefore be effectively replaced by its corresponding $E_j$ estimate.  Thus, the lemma we seek to prove is essentially obtained by simply incorporating the additional error estimate \eqref{equ:E_approx_error_to_GKlam} into the analysis of Algorithm 3 in \cite{Iw-arxiv} wherever an ${\it \mathcal{E}_{s_1,K} \tilde{\psi} {\bf A}}$ or ${\it \mathcal{G}_{\lambda,K} \tilde{\psi} {\bf A}}$ currently appears.

To show that lines 6 -- 14 of Algorithm 3 will identify all $\omega \in B$ satisfying \eqref{equ:GSFT_Identify2} we can adapt the proof of Lemma 6 on page 72 of \cite{Iw-arxiv}.  Choose any $\omega \in B$ you like.  Lemmas 3 and 5 from \cite{Iw-arxiv} together with \eqref{equ:E_approx_error_to_GKlam} above ensure that both
\begin{align}
\left| E_j - \widehat{f}_{\omega} \right| &\leq \left| E_j - \left({\it \mathcal{G}_{\lambda,K} \tilde{\psi} {\bf A}} \right)_j \right| + \left| \left({\it \mathcal{G}_{\lambda,K} \tilde{\psi} {\bf A}} \right)_j - \widehat{f}_{\omega} \right| \nonumber \\ &\leq  \| \n \|_{\infty} + \frac{\epsilon \cdot \left\| \hf - \hf^{\rm opt}_{(s/\epsilon)} \right\|_1}{s} + \left\| \widehat{f} - \widehat{f}\vert_{B} \right\|_1
\label{eq:Ej_apprx_Fourier_coef}
\end{align}
and
\begin{align}
\left| E_{j'} - \widehat{f}_{\omega} \right| &\leq \left| E_{j'} - \left({\it \mathcal{E}_{s_1,K} \tilde{\psi} {\bf A}} \right)_{j'} \right| + \left| \left({\it \mathcal{E}_{s_1,K} \tilde{\psi} {\bf A}} \right)_{j'} - \widehat{f}_{\omega} \right| \nonumber \\ &\leq  \| \n \|_{\infty} + \frac{\epsilon \cdot \left\| \hf - \hf^{\rm opt}_{(s/\epsilon)} \right\|_1}{s} + \left\| \widehat{f} - \widehat{f}\vert_{B} \right\|_1
\end{align}
hold for more than half of the $j$ and $j'$-indexes that Algorithm 3 uses to approximate $\widehat{f}_{\omega}$.  The rest of the proof of Lemma 6 now follows exactly as in \cite{Iw-arxiv} after the $\delta$ at the top of page 73 is redefined to be $\delta := \frac{\epsilon \cdot \left\| \hf - \hf^{\rm opt}_{(s/\epsilon)} \right\|_1}{s} + \left\| \widehat{f} - \widehat{f}\vert_{B} \right\|_1 + \| \n \|_\infty$, each $\left({\it \mathcal{G}_{\lambda,K} \tilde{\psi} {\bf A}} \right)_j$ entry is replaced by $E_j$, and each $\left( {\it \mathcal{E}_{s_1,K} \tilde{\psi} {\bf A}} \right)_{j'}$ entry is replaced by $E_{j'}$.

Similarly, to show that lines 15 -- 18 of Algorithm 3 will produce an estimate $z_{\omega} \in \mathbb{C}$ satisfying \eqref{equ:GSFT_Estimate2} for every $\omega \in S$ one can simply modify the first few lines of the proof of Theorem 7 in Appendix F of \cite{Iw-arxiv}.  In particular, one can redefine $\delta$ as above, replace the appearance of each $\left({\it \mathcal{G}_{\lambda,K} \tilde{\psi} {\bf A}} \right)_j$ entry by $E_j$, and then use \eqref{eq:Ej_apprx_Fourier_coef}.  The bounds on the runtime follow from the last paragraph of the proof of Theorem 7 in Appendix F of \cite{Iw-arxiv} with no required changes.  
To finish, we note that the second paragraph of the lemma above follows from a completely analogous modification of the proof of Corollary 4 in Appendix G of \cite{Iw-arxiv}.

\end{proof}

\subsection{Proof of Theorem~\ref{thm:StableRecov}}

To get the first paragraph of Theorem~\ref{thm:StableRecov} one can simply utilize the proof of Theorem 7 exactly as it is written in Appendix F of \cite{Iw-arxiv} after redefining $\delta$ as above, and then replacing the appearance of each $\left({\it \mathcal{G}_{\lambda,K} \tilde{\psi} {\bf A}} \right)_j$ entry with its approximation $E_j$.  Once this has been done, equation (42) in the proof of Theorem 7 can then be taken as a consequence of Lemma~\ref{lem:SFT_Ident_Coef_Est_in_Noise} above.  In addition, all references to Lemma 6 of \cite{Iw-arxiv} in the proof can then also be replaced with appeals to Lemma~\ref{lem:SFT_Ident_Coef_Est_in_Noise} above.  To finish, the proof of Corollary 4 in Appendix G of \cite{Iw-arxiv} can now be modified in a completely analogous fashion in order to prove the second paragraph of Theorem~\ref{thm:StableRecov}.

\section*{Acknowledgements}

M.A. Iwen, R. Zhang, and S. Merhi were all supported in part by NSF DMS-1416752.  The authors would like to thank Aditya Viswanathan for helpful comments and feedback on the first draft of the paper.

\end{document}